\newtheorem{theorem}{\bf Theorem}[section]
\newtheorem{corollary}[theorem]{\bf Corollary}
\newtheorem{proposition}[theorem]{\bf Proposition}
\newtheorem{define}[theorem]{\bf Definition}
\newtheorem{remark}{\bf Remark}
\newtheorem{lemma}[theorem]{\bf Lemma}
\newcommand{\beq}{\begin{equation}}
\newcommand{\eeq}{\end{equation}}
\newcommand{\ben}{\begin{eqnarray}}
\newcommand{\een}{\end{eqnarray}}
\newcommand{\beno}{\begin{eqnarray*}}
\newcommand{\eeno}{\end{eqnarray*}}
\numberwithin{equation}{section}
\subjclass[2010]{35A01, 35B45, 35R11, 35Q92.}
\keywords{Generalized Keller-Segel System, Mixing, Fractional dissipation, Suppression of blow up.}
\title[generalized  Keller-Segel system]{Suppression of blow up by mixing in generalized  Keller-Segel system with fractional dissipation and strong singular kernel}
\author[Binbin Shi]{\sc Binbin Shi}
\address{$^1$ School of Mathematical Sciences , Shanghai Jiao Tong University, Shanghai, 200240,  P.R.China.}
\email{binbinshi@sjtu.edu.cn}
\begin{document}
\bibliographystyle{abbrv}
\maketitle
\begin{center}
\sc Abstract
\end{center}
In this paper, we consider the Cauchy problem for a generalized parabolic-elliptic Keller-Segel equation with a fractional dissipation and advection by a weakly mixing (see Definition \ref{def:2.4}). Here the attractive kernel has strong singularity, namely, the derivative appears in the nonlinear term by singular integral. Without advection, the solution of equation blows up in finite time. Under a suitable mixing condition on the advection, we show the global existence of classical solution with large initial data in the case of the derivative of dissipative term is higher than that of nonlinear term. Since the attractive kernel is strong singularity, the weakly mixing has destabilizing effect in addition to the enhanced dissipation effect, which makes the problem more complicated and difficult. In this paper, we establish the $L^\infty$-criterion and obtain the global $L^\infty$ estimate of the solution through some new ideas and techniques. Combined with \cite{Shi.2019}, we discuss all cases of generalized  Keller-Segel system with mixing effect, which was proposed by Kiselev, Xu (see \cite{Kiselev.2016}) and Hopf, Rodrigo (see \cite{Hopf.2018}). Based on more precise estimate of solution and the resolvent estimate of semigroup operator, we introduce a new method to study the enhanced dissipation effect of mixing in generalized parabolic-elliptic Keller-Segel equation with a fractional dissipation. And the RAGE theorem is no longer needed in our analysis.

\vskip .3in

\section{Introduction}

We consider the following generalized parabolic-elliptic Keller-Segel equation on torus $\mathbb{T}^d$ with fractional dissipation and strong singular kernel in the presence of an incompressible flow
\begin{equation}\label{eq:1.1}
\begin{cases}
\partial_t\rho+Au\cdot \nabla \rho+(-\Delta)^{\frac{\alpha}{2}}\rho+\nabla\cdot(\rho B(\rho))=0,\qquad  & x\in \mathbb{T}^d,\ t>0,\\
\rho(0,x)=\rho_0(x),  & x\in \mathbb{T}^d.
\end{cases}
\end{equation}
Here $\rho(t,x)$ is a real-valued function of $t$ and $x$, the domain $\mathbb{T}^d=[-\frac{1}{2},\frac{1}{2})$ is the periodic box with dimension $d\geq2$ and $A$ is a positive constant. The quantity $\rho$ denotes the density of microorganisms, $u$ is a divergence-free vector field which is an ambient flow. The $0<\alpha<2$ is parameter controlling the strength of the dissipative term. The nonlocal operator $(-\Delta)^{\frac{\alpha}{2}}$ is known as the  Laplacian of the order  $\frac{\alpha}{2}$, which is given by
\begin{equation}\label{eq:1.2}
(-\Delta)^{\frac{\alpha}{2}}\phi(x)=\mathcal{F}^{-1}(|\xi|^\alpha \hat{\phi}(\xi))(x),\ \ \ x\in\mathbb{R}^d,
\end{equation}
where $\mathcal{F}^{-1}$ is inverse Fourier transformation. The linear vector operator $B$ is called attractive kernel, which could be represented as
\begin{equation}\label{eq:1.3}
B(\rho)=\nabla((-\Delta)^{-\frac{d+2-\beta}{2}}\rho)=\nabla K\ast \rho,\ \ \  x\in \mathbb{R}^d,
\end{equation}
where
\begin{equation}\label{eq:1.4}
\nabla K\sim -\frac{x}{|x|^\beta}, \ \  \beta\in [2,d+1).
\end{equation}
And notice that if $d<\beta<d+1$, the nonlinear term in (\ref{eq:1.1}) is written as
\begin{equation}\label{eq:1.5}
\begin{aligned}
\nabla\cdot(\rho B(\rho))
&=\nabla \rho \cdot \nabla((-\Delta)^{-\frac{d+2-\beta}{2}}\rho)-\rho(-\Delta)^{\frac{\beta-d}{2}}\rho\\
&=\nabla \rho \cdot \nabla K\ast \rho+\rho\Delta K\ast \rho,
\end{aligned}
\end{equation}
where $(-\Delta)^{\frac{\beta-d}{2}}\rho$ contains the derivative of solution, or $\Delta K$ is not integrable in $L^1$ norm. For  this case, we call $B$ is {\em strong singular kernel}.

\vskip .1in

In this paper, we consider the tours $\mathbb{T}^d$, on which the definitions of $(-\Delta)^{\frac{\alpha}{2}}$ and attractive kernel $B$ are different from those in (\ref{eq:1.2}) and (\ref{eq:1.3}). The fractional Laplacian operator needs a kernel representation. The details can be referred to Section 2. We pose the following assumptions on the attractive kernel $B$ in the case of $\beta\in (d,d+1)$ (see \cite{Hopf.2018})
\begin{itemize}
  \item [(1)]\ $K$ is a periodic convolution kernel, which is smooth away from the origin,
  \vskip .02in
  \item [(2)]\ $\nabla K(x)\sim -\frac{x}{|x|^\beta}$ near $x=0$ for $\beta\in (d,d+1)$. In order to explore this conveniently, we assume $\nabla K(x)= -\frac{x}{|x|^\beta}$ on $ \Omega_0=\left\{ x\in \mathbb{T}^d \big| |x|\leq\frac{1}{4}\right\}$. And define
      \begin{equation}\label{eq:1.6}
      B(\rho)=\nabla K\ast \rho.
      \end{equation}
\end{itemize}

\vskip .1in

In the absence of the advection, the Equation (\ref{eq:1.1}) is the generalized  Keller-Segel equation with fractional dissipation
\begin{equation}\label{eq:1.7}
\partial_t\rho+(-\Delta)^{\frac{\alpha}{2}}\rho+\nabla\cdot(\rho B(\rho))=0,\quad \rho(0,x)=\rho_0(x),\quad x\in \Omega,
\end{equation}
where $\Omega$ is $\mathbb{R}^d$ or $\mathbb{T}^d$. And the Equation (\ref{eq:1.7}) describes many physical processes involving diffusion and interaction of particles (see \cite{ Biler.1999, Brenner.1999}). We know that the solution of Equation (\ref{eq:1.7}) may blow up in finite time for large initial data. Specifically, when $\alpha=2, \beta=d$, the Equation (\ref{eq:1.7}) is called classical attractive type Keller-Segel system. In one space dimension, the equation admits large data global in time smooth solution (see \cite{Hillen.2004,Koichi.2001} ). In high dimensions, there are global in time smooth solution when the initial data is small, while the solutions  may exhibit finite-time blowup for large data (see \cite{Blanchet.2006, Corrias.2004,Kiselev.2016,Nagai.1995,Senba.2002}). When $0<\alpha<2, \beta=d$, the Equation (\ref{eq:1.7}) is a classical Keller-Segel system with fractional dissipation, which has been studied by many people. For $d=1$ and $0<\alpha\leq1$, the solution of Equation (\ref{eq:1.7}) is global if $\|\rho_0\|_{L^{\frac{1}{\alpha}}}\leq C(\alpha)$, and the solution of Equation (\ref{eq:1.7}) is global if $1<\alpha<2$ (see \cite{Nikolaos.2010}). While $d\geq2$, the solution of Equation (\ref{eq:1.7}) would  blow up in finite time with large data (see \cite{Biler.2010, Hopf.2018,Li.2010,Li.2018}). In the case of $0<\alpha<2, \beta\in [2,d+1), d\geq2$, the Equation (\ref{eq:1.7}) is called a generalized Keller-Segel system with fractional dissipation, the solution of Equation (\ref{eq:1.7}) always blows up in finite time when the initial data is large  (see \cite{Biler.2010, Hopf.2018,Li.2018}).

\vskip .1in

For $\Omega=\mathbb{R}^d$, let us consider the scaling properties of the Equation (\ref{eq:1.7}), the attractive kernel $B$ is defined in (\ref{eq:1.3}). This equation is invariant under the scaling
\begin{equation}\label{eq:1.8}
\rho_{\lambda}(t,x)=\lambda^{\alpha+d-\beta}\rho(\lambda^{\alpha}t, \lambda x),\ \ \  \lambda>0.
\end{equation}
Suppose that the critical space is $L^{p_c}$, in the sense that $\|\rho_{\lambda}(t,\cdot)\|_{L^{p_c}}=\|\rho(\lambda^{\alpha}t,\cdot)\|_{L^{p_c}}$. One can get
\begin{equation}\label{eq:1.9}
p_c=\frac{d}{\alpha+d-\beta}.
\end{equation}
As the Equation (\ref{eq:1.7}) is $L^1$ conservation, if $p_c<1$, the Equation (\ref{eq:1.7}) is called $L^1$-subcritical, if $p_c=1$, the Equation (\ref{eq:1.7}) is called $L^1$-critical, and if $p_c>1$, the Equation (\ref{eq:1.7}) is called $L^1$-supercritical. We consider the classical Keller-Segel  system $(\beta=d,\alpha=2)$, then $p_c=\frac{d}{2}$. If $d=1, p_c=\frac{1}{2}<1$ is $L^1$-subcritical, the solution is global. if $d=2, p_c=1$ is $L^1$-critical, when $\|\rho_0\|_{L^1}>8\pi$, the solution blows up in finite time. And if $d\geq3, p_c>1$ is $L^1$-supercritical, when $\|\rho_0\|_{L^1}>0$, the solution blows up in finite time.

\vskip .1in

Enhanced dissipation effect of mixing is an interesting phenomenon for the equations with dissipative and convective terms. It means that when convection $u\cdot \nabla$ is added to the equation, the dissipation effect will be enhanced and the solution of the equation has a faster decaying. For incompressible vector field $u$, there are two kinds of flow which have been widely studied by many scholars. First, $u$ is shear flow, we can refer to \cite{Bedrossian.201701, Wei.2019}, and some specific shear flows are considered to the Naver-Stokes equation. For example, Coutte flow (see \cite{Bedrossian.201602,Masmoudi.2020}), Poiseuille flow (see \cite{Michele.2019}) and Kolmogorov flow (see \cite{Lin.2019, Wei.201901, Wei.2020}). Secondly,  Constantin, Kiselev,  Ryzhik, and Zlato\v{s} (see \cite{Constantin.2008})  defined the relaxation enhancing flow, and proved that $u$ is relaxation enhancing flow if and only if $u\cdot \nabla$ has no nontrivial $\dot{H}^{1}$ eigenfunction. Similarly Constantin, Hopf et.al (see \cite{Hopf.2018}) defined the $\alpha$-relaxation enhancing flow. The details are introduced in Section 2. Some other studies and examples of relaxation enhancing flow can be also referred to \cite{Fayad.2006,Fayad.2002, Kiselev.200801,Kiselev.2016}.

\vskip .1in

Recently, the chemotaxis process in fluid has been studied (see \cite{ Che.2016,Francesco.2010, Duan.2010, Liu.2011, Lorz.2010, Wang.2019, Winkler.201201}). The possible effects result from the interaction of chemotactic and fluid transport process. Many people get interested in the suppression of blow up in the  chemotactic model with mixing effect. Kiselev, Xu (see \cite{Kiselev.2016}), Hopf, Rodrigo (see \cite{Hopf.2018}), and Shi, Wang (see \cite{Shi.2019}) obtained the global solution of Keller-Segel system by mixing effect of relaxation enhancing flow. Bedrossian and He (see \cite{Bedrossian.2017, He.201802}) showed the enhanced dissipation effect and the global existence of classical solution to Keller-Segel system with shear flow.

\vskip .1in

For the Equation (\ref{eq:1.1}), our main concern is whether mixing effect can suppress the blowup phenomenon. In fact, the Equation (\ref{eq:1.1}) become $L^p$-subcritical ($p>\frac{d}{a+d-\beta}$) from $L^1$-critical ($L^1$-supercritical) by large mixing effect. For example, when $\alpha=2, \beta=d, d=2,3$, Kiselev and Xu (see \cite{Kiselev.2016}) established the global $L^2$ estimate of solution in the case of weakly mixing, and obtained the global smooth solution by $L^2$-criterion. For the case of $0<\alpha<2, \beta\in [2,d+1), d\geq2$, Hopf and Rodrigo proved the global $L^2$ estimate of solution by relaxation enhancing flow, and also got the global smooth solution if $\alpha>\max\{\beta-\frac{d}{2},1\}$(see \cite{Hopf.2018}, Theorem 4.5). In particular, for classical Keller-Segel system with fractional dissipation, when $\alpha>\frac{d}{2},d=2,3$, the solution of Equation (\ref{eq:1.1}) was global smooth. For the smaller lower bounds on $\alpha$ and higher dimension $d$, we require the $L^p(p>\frac{d}{a+d-\beta})$ estimate of the solution instead of the $L^2$ estimate. Hopf and Rodrigo only considered the case $\alpha=2, \beta=d, d\geq4$ (see \cite{Hopf.2018}, Theorem 4.6), they got the global $L^p(p>\frac{d}{2})$ estimate of the solution by relaxation enhancing flow, and obtained the global smooth solution by $L^p$-criterion. For the case of $0<\alpha<2,\beta\in [2,d], d\geq2$,  Shi and Wang \cite{Shi.2019} got the global $L^\infty$ estimate of solution to Equation (\ref{eq:1.1}) and obtained the global classical solution.

\vskip .1in

Our goal is to show that the blowup solution of Equation (\ref{eq:1.1}) can be suppressed through mixing effect. This question is motivated by the works of Kiselev et. al (see \cite{Kiselev.2016}), Hopf et. al (see \cite{Hopf.2018}) and Shi et. al (see \cite{Shi.2019}). According to the scaling properties in $\mathbb{R}^d$, if we can get the $L^\infty$  estimate of the solution to Equation (\ref{eq:1.1}), then for
$$
\alpha>\max\{\beta-d, 0\}, \ \ \  d\geq2,
$$
the $p_c=\frac{d}{\alpha+d-\beta}<\infty$, then the Equation (\ref{eq:1.1}) is $L^\infty$-subcritical, the solution of Equation (\ref{eq:1.1}) may be global existence,  where need $\alpha>0$ for obtaining $L^\infty$ estimate of solution via enhanced dissipation effect. In \cite{Shi.2019}, Shi and Wang obtained the global classical solution in the case of $0<\alpha<2, 2\leq\beta\leq d, d\geq2$. Firstly, the $L^\infty$-criterion of solution was established by energy energy. Next, the $L^\infty$ estimate of solution  was obtained  by mixing effect, where nonlinear maximum principle (see Lemma \ref{lem:2.3}) was applied. Compared with \cite{Shi.2019}, for the case of $0<\alpha<2, d<\beta< d+1, d>2$, as $B(\rho)$ is strong singular kernel, the mixing has destabilizing effect in the Equation (\ref{eq:1.1}). Therefore, there are some difficulties that need to be overcome.

\vskip .1in

\begin{itemize}
  \item [(1)] For the $L^\infty$-criterion of solution to Equation (\ref{eq:1.1}) (see Proposition \ref{prop:3.1}), as the technical difficulties of energy inequality, we need to establish the $W^{k,\infty}$ estimate of solution if $L^\infty$ estimate is bounded for all $\beta-d<\alpha<2, d<\beta< d+1, d>2$, see Remark \ref{rem:12}. In this paper, we take $k=3$, and for the $W^{3,\infty}$ estimate of solution, we need to control the nonlinear term by dissipative term. Since $\Delta K$ is not integrable in $L^1$ norm, the $W^{3,\infty}$ estimate is not obvious. We need some new techniques to deal with these problems.
  \item [(2)] In this paper, we establish the global $L^\infty$ estimate of the solution to  Equation (\ref{eq:1.1}) by bootstrap argument. Thus,  we need to obtain local $L^2, L^p(p>\frac{d}{a+d-\beta}), L^\infty$ estimate, which is independent of $A$. Since the nonlinear term is strong singularity, local estimate of solution is difficult by standard energy method. So we need some new techniques to obtain the differential inequalities of $\widetilde{\rho}$  and $\|\rho\|_{L^p}$, where $\widetilde{\rho}$ is defined in (\ref{eq:4.1}).
\end{itemize}

\vskip .1in

In this paper, we consider the generalized Keller-Segel equation with fractional dissipation and weakly mixing (see Definition \ref{def:2.4}) in the case of $\beta-d<\alpha<2, \beta\in (d,d+1), d>2$. In order to obtain the $L^\infty$-criterion of solution to Equation (\ref{eq:1.1}), we establish a nonlinear maximum principle about the derivation of solution on tours $\mathbb{T}^d$, the details can be referred to Appendix (see Lemma \ref{lem:6.1}). By singular integral form of dissipative term and nonlinear term to Equation (\ref{eq:1.1}), we use the part of dissipation to deal with the strong singularity of nonlinear term near origin, and get the estimate of nonlinear term. The details can be referred to (\ref{eq:3.20})-({\ref{eq:3.23}}). And we use the dissipative term to control the nonlinear term by nonlinear maximum principle (see Lemma \ref{lem:6.1}) of derivatives to solution, see (\ref{eq:3.24})-(\ref{eq:3.28}). We show that if the $\|\rho\|_{L^\infty}$  is bounded, then $\|D\rho\|_{L^\infty}, \|D^2\rho\|_{L^\infty}$  and $\|D^3\rho\|_{L^\infty}$ are also bounded. The local $L^p(p>\frac{d}{a+d-\beta}), L^\infty$ estimates of solution are established by similar idea with $L^\infty$-criterion, the details can be referred to Lemma \ref{lem:4.1} and Lemma \ref{lem:4.3}. Some useful techniques are from the argument of surface quasi-geostrophic equation (see \cite{Cordoba.2004}). For the analysis of mixing effect, based on the idea of contradiction and the estimate of semigroup of linear problem, we give a new proof method, which is a new observation and idea of mixing effect in Keller-Segel system. And this method can simplify the analysis. Notice that the RAGE theorem (see \cite{Constantin.2008,Cycon.1987, Kiselev.2016}) is no longer needed in this paper. For original idea and details of estimate of semigroup, we can refer to \cite{Wei.2019}, Section 2 and Appendix. The key idea is to define $\Phi(A)$ in (\ref{eq:5.8}), which describe the  time and frequency  of solution to Equation (\ref{eq:1.1}), see Lemma \ref{lem:5.2} in Section 5 for more details. And we obtain the $L^\infty$ estimate of the solution through nonlinear maximum principle on tours (see Lemma \ref{lem:2.3}), which is introduced in \cite{Shi.2019}. Combined with \cite{Shi.2019}, we discuss all cases of generalized  Keller-Segel system with mixing effect, and believe that the range of $\alpha$ and $d$ is optimal in this paper.

\vskip .1in

Let us now state our main result.

\vskip .1in

\begin{theorem}\label{thm:1.1}
Let $\beta-d<\alpha<2,\beta\in (d,d+1), d>2$, for any initial data $\rho_0\geq0, \rho_0\in W^{3,\infty}(\mathbb{T}^d)$, there exists a smooth incompressible flow $u$ and a positive constant $A_0=A(\alpha,\beta,\rho_0, d)$, when $A\geq A_0$, such that the unique solution $\rho(t,x)$ of Equation (\ref{eq:1.1}) is global in time, and we have
$$
\rho(t,x)\in C(\mathbb{R}^{+}; W^{3,\infty}(\mathbb{T}^d)).
$$
\end{theorem}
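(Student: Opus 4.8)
The strategy is the standard bootstrap-by-mixing scheme, but with the key analytic inputs adapted to the strong singular kernel. First I would establish a local well-posedness statement: for initial data $\rho_0 \in W^{3,\infty}(\mathbb{T}^d)$, $\rho_0 \ge 0$, there is a time $T_* > 0$ and a unique classical solution $\rho \in C([0,T_*); W^{3,\infty})$, with $\rho \ge 0$ preserved and $\|\rho(t)\|_{L^1} = \|\rho_0\|_{L^1}$ conserved (since both the transport term and the nonlinear term are in divergence form, and $(-\Delta)^{\alpha/2}$ preserves mass). Crucially, the local existence time and the local bounds must be taken \emph{independent of $A$}; this is why the paper insists on the $A$-independent local estimates in Lemma~\ref{lem:4.1} and Lemma~\ref{lem:4.3} for $\|\rho\|_{L^2}$, $\|\rho\|_{L^p}$ with $p > \frac{d}{\alpha+d-\beta}$, and $\|\rho\|_{L^\infty}$. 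These are proved by the Córdoba--Córdoba-type pointwise inequality for the fractional Laplacian combined with the splitting of the singular kernel near the origin, using part of the dissipation to absorb the bad part of the nonlinearity.

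Next comes the continuation criterion, Proposition~\ref{prop:3.1}: the solution extends as long as $\|\rho(t)\|_{L^\infty}$ stays bounded. Given the $L^\infty$ bound, one upgrades successively to bounds on $\|D\rho\|_{L^\infty}$, $\|D^2\rho\|_{L^\infty}$, $\|D^3\rho\|_{L^\infty}$ by applying the nonlinear maximum principle of Lemma~\ref{lem:6.1} to the equations satisfied by $D^k\rho$ ($k=1,2,3$): at the maximum point of (a component of) $D^k\rho$, the transport term and a term from the dissipation vanish or have a good sign, the dissipative term contributes a coercive $\gtrsim \|D^k\rho\|_{L^\infty}^{1+\alpha/d}$-type lower bound, and the nonlinear contributions — including the genuinely singular piece $\rho\,\Delta K * \rho$ from \eqref{eq:1.5} — are controlled using the singular-integral representation on $\mathbb{T}^d$ and the already-bounded lower-order norms, as indicated in \eqref{eq:3.20}--\eqref{eq:3.28}. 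This reduces everything to a single global-in-time a priori bound on $\|\rho\|_{L^\infty}$.

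The heart of the matter is then the global $L^\infty$ estimate via mixing, carried out by the contradiction/semigroup argument of Section~5. One first chooses the flow $u$ to be weakly mixing in the sense of Definition~\ref{def:2.4} so that the associated semigroup $e^{t(-\Delta)^{\alpha/2} - t A u\cdot\nabla}$ on the mean-zero subspace enjoys an enhanced-dissipation bound quantified through the functional $\Phi(A)$ of \eqref{eq:5.8}: as $A \to \infty$, the solution spends only a short time in low frequencies before being mixed to high frequencies where the fractional dissipation acts strongly. One then runs the bootstrap: assume on a maximal interval that $\|\rho(t)\|_{L^\infty} \le M$ for a suitable $M = M(\rho_0)$; derive from Lemma~\ref{lem:4.1}/\ref{lem:4.3} and the enhanced-dissipation estimate that $\|\rho(t)\|_{L^2}$, then $\|\rho(t)\|_{L^p}$ with $p > \frac{d}{\alpha+d-\beta}$, and finally (using the $L^\infty$-subcriticality $p_c < \infty$ guaranteed by $\alpha > \beta - d$, together with the nonlinear maximum principle Lemma~\ref{lem:2.3}) $\|\rho(t)\|_{L^\infty}$, are \emph{strictly} below $M$ once $A \ge A_0$; hence the maximal interval is all of $\mathbb{R}^+$. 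I expect the main obstacle to be exactly the destabilizing effect the paper flags: for the strong singular kernel, the flow $Au\cdot\nabla$ does \emph{not} simply help — the large constant $A$ also amplifies the transport-induced growth in the intermediate $L^p$/$L^\infty$ estimates, so one must show that the enhanced-dissipation gain (captured by $\Phi(A) \to 0$) beats this loss. Balancing these two $A$-dependent effects, and closing the bootstrap with the right exponents $p$ and threshold $A_0$, is where the new ideas and the precise semigroup resolvent estimates of Section~5 are needed; the rest is, modulo careful but routine computation, assembled from Lemmas~\ref{lem:2.3}, \ref{lem:4.1}, \ref{lem:4.3}, \ref{lem:5.2}, \ref{lem:6.1} and Proposition~\ref{prop:3.1}.
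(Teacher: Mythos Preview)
Your outline matches the paper's architecture: local $A$-independent $L^2$, $L^p$, $L^\infty$ bounds (Section~4), the $L^\infty$-criterion via the nonlinear maximum principle (Proposition~\ref{prop:3.1} and Lemma~\ref{lem:6.1}), and the bootstrap driven by enhanced dissipation through $\Phi(A)$ (Section~5). Two points need correction, though, and the first is not cosmetic.

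First, you have the direction of $\Phi(A)$ backwards. By definition \eqref{eq:5.8}, $\Phi(A)=\inf_t \|\Lambda^{\alpha/2}\rho\|_{L^2}^2/\|\rho-\overline\rho\|_{L^2}^2$, and Lemma~\ref{lem:5.2} proves $\Phi(A)\to+\infty$ as $A\to+\infty$, not $\Phi(A)\to 0$. This is exactly what is fed into \eqref{eq:5.39}--\eqref{eq:5.41}: one needs $\Phi(A)$ \emph{large} so that the Gronwall factor $\exp(-\tfrac{\tau_1}{2}\Phi(A))$ beats the fixed growth $e^{C_2\tau_1}$ and forces $\|\rho(\tau_1)-\overline\rho\|_{L^2}\le B_1$. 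If $\Phi(A)\to 0$ the bootstrap cannot close.

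Second, you misplace the ``destabilizing effect'' of $A$. The transport term $Au\cdot\nabla\rho$ contributes nothing to the $L^p$ or $L^\infty$ estimates of $\rho$ itself: it vanishes at the maximum point (Corollary~\ref{cor:4.2}, Lemma~\ref{lem:4.1}) and integrates to zero against $|\rho|^{p-2}\rho$ by incompressibility (see \eqref{eq:4.23}, \eqref{eq:4.56}). Those estimates are genuinely $A$-independent, which is the whole point of Section~4. The $A$-amplification shows up only in the $W^{k,\infty}$ bounds of Proposition~\ref{prop:3.1} (Remark~\ref{rem:11}), because differentiating the equation produces $ADu\cdot\nabla\rho$, etc. The paper's ``destabilizing effect'' refers to the difficulty that the strongly singular kernel $\Delta K\notin L^1$ prevents closing $L^p$ estimates by standard energy methods (one would need control on derivatives, which are $A$-dependent); this is overcome not by balancing two $A$-effects but by the singular-integral splitting \eqref{eq:4.26}--\eqref{eq:4.50}, which keeps everything $A$-free.
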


\vskip .1in

\begin{remark}\label{rem:1}
The smooth incompressible flow $u$ is weakly mixing (see Definition \ref{def:2.4}). According to the proof in this paper, the result is still remain true  for the general relaxation enhancing flow (see \cite{Constantin.2008,Hopf.2018}).
\end{remark}

\vskip .1in

\begin{remark}\label{rem:2}
Theorem \ref{thm:1.1} show that when the derivative of dissipative term is higher than that of nonlinear term, the classical solution of Equation (\ref{eq:1.1}) is global existence if $A$ is large enough. Similar phenomenon can be referred to  2d quasi-geostrophic equation \cite{Constantin.1999,Resnick.1995}  and fractal Burgers equation \cite{Alexander.2008}.
\end{remark}

\vskip .1in

\begin{remark}\label{rem:3}
Mathematically, we can continue to consider general $\beta$ and $\alpha$. For example, for any $\beta>d, \beta-d<\alpha<2$, the similar with the proof of Theorem \ref{thm:1.1}, the result is  still true. For the case of $\beta>d, \alpha>\beta-d$, when $\alpha\geq2$, the result is not obvious.
\end{remark}

\vskip .1in

In the following, we briefly state our main ideas of the proof. Firstly, we establish the $L^\infty$-criterion of solution to Equation (\ref{eq:1.1}). Namely, the global classical solution of Equation (\ref{eq:1.1}) is existence if the $L^\infty$ norm of solution is uniform bounded. Next, we obtain the $L^\infty$ estimate of the solution to Equation (\ref{eq:1.1}). The local $L^2, L^p$ and $ L^\infty$ estimate of the solution are obtained by some precise estimate, where we needs to choose $p=2^n>\frac{d}{\alpha+d-\beta}, n\in \mathbb{Z}^+$. Based on the idea of contradiction and the estimate of semigroup, the the local $L^2$ estimate of the solution is small by enhanced dissipation effect. Combined with the local $L^2$ and $L^\infty$ estimate of the solution, we deduce that the local $L^p$ estimate of the solution is controlled by its initial data. Using the nonlinear maximum principle (see Lemma \ref{lem:2.3}), the local $L^\infty$ estimate of solution is controlled by the initial data. By bootstrap argument, the local  $L^2$, $L^p$ and $L^\infty$ estimate of the solution can be extended to all time. Thus, we get the uniform $L^\infty$ estimate.

\vskip .1in

This paper is organized as follows. In Section 2, we introduce the nonlocal operator, functional space and mixing effect. Some notations and useful properties are also introduced in this section. In Section 3, we establish the local classical solution and $L^\infty$-criterion of solution to Equation (\ref{eq:1.1}), some new techniques and tools are applied, the details can be referred to the proof of Proposition \ref{prop:3.1} and Lemma \ref{lem:6.1}. In Section 4, we obtain the local $L^2, L^p(p=2^n>\frac{d}{\alpha+d-\beta})$ and $ L^\infty$ estimate of the solution to Equation (\ref{eq:1.1}). In order to control the destructive effect of mixing in nonlinear term, some new observations and estimates are applied. In Section 5, we obtain the global $L^\infty$ estimate of solution to Equation (\ref{eq:1.1}) by bootstrap argument. For the the analysis of mixing effect, we introduce new proof, and nonlinear maximum principle (see Lemma \ref{lem:2.3}) is applied in this section. In Appendix, we give the proofs of the nonlinear maximum principle of the derivation on tours $\mathbb{T}^d$ and some Lemmas.

\vskip .1in

Throughout the paper, $C$ stands for universal constant that may change from line to line.

\vskip .3in

\section{Preliminaries}

In what follows, we provide some the auxiliary results and notations.

\subsection{Nonlocal operator}
The fractional Laplacian is nonlocal operator and it has the following kernel representation on $\mathbb{T}^d$ (see \cite{Burczak.2017, Calderon.1954})
\begin{equation}\label{eq:2.1}
(-\Delta)^{\frac{\alpha}{2}}f(x)=C_{\alpha,d}\sum_{k\in \mathbb{Z}^d}P.V.\int_{\mathbb{T}^d}\frac{f(x)-f(y)}{|x-y+k|^{d+\alpha}}dy,\ \ \ 0<\alpha<2,
\end{equation}
where
\begin{equation}\label{eq:2.2}
C_{\alpha,d}=\frac{2^\alpha\Gamma(\frac{d+\alpha}{2})}{\pi^{\frac{d}{2}}|\Gamma(-\frac{\alpha}{2})|},
\end{equation}
and $P.V.$ is Cauchy principal value. For the convenience of discussion, we omitted the symbol $P.V.$ in next section. Namely
$$
P.V.\int_{\mathbb{T}^d}\triangleq\int_{\mathbb{T}^d}.
$$

\vskip .1in

Recall that we denote by
$$
0\leq\lambda_1\leq\lambda_2\leq\cdots \leq\lambda_n\leq \cdots
$$
is the eigenvalue of the operator $-\Delta$ on $\mathbb{T}^d$, then the eigenvalue of operator $(-\Delta)^{\frac{\alpha}{2}}$ is as follows (see \cite{Cusimano.2018})
\begin{equation}\label{eq:2.3}
0\leq\lambda_1^{\frac{\alpha}{2}}\leq\lambda_2^{\frac{\alpha}{2}}\leq\cdots\leq\lambda_n^{\frac{\alpha}{2}}\leq \cdots,
\end{equation}
and
\begin{equation}\label{eq:2.4}
\lim_{n\rightarrow \infty}\lambda_n^{\frac{\alpha}{2}}=\infty.
\end{equation}

The following results are important lemmas, the details can be referred to \cite{Ascasibar.2013, Cordoba.2004, Ju.2005,Shi.2019}.

\begin{lemma}[Positivity Lemma]\label{lem:2.1}
 Suppose $0\leq \alpha \leq2, \Omega=\mathbb{R}^d, \mathbb{T}^d$ and $f, (-\Delta)^{\frac{\alpha}{2}}f\in L^p$, where $p\geq2$. Then
$$
\frac{2}{p}\int_{\Omega}((-\Delta)^{\frac{\alpha}{4}}|f|^{\frac{p}{2}})^2dx\leq \int_{\Omega}|f|^{p-2}f(-\Delta)^{\frac{\alpha}{2}}f dx.
$$
\end{lemma}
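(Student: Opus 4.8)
The plan is to reduce Lemma \ref{lem:2.1} to an elementary one–variable inequality, after rewriting both sides as symmetric double integrals against the Gagliardo-type kernel coming from \eqref{eq:2.1}. First I would use that $(-\Delta)^{\alpha/2}$ is self-adjoint and nonnegative: setting $g:=|f|^{p/2}$, one has $\int_\Omega\big((-\Delta)^{\alpha/4}g\big)^2\,dx=\int_\Omega g\,(-\Delta)^{\alpha/2}g\,dx$ (by Plancherel when $\Omega=\mathbb{R}^d$, and by expanding $g$ in the eigenbasis of $-\Delta$ on $\mathbb{T}^d$, using \eqref{eq:2.3}). Hence it suffices to prove
\[
\tfrac{2}{p}\int_\Omega |f|^{p/2}(-\Delta)^{\alpha/2}|f|^{p/2}\,dx\ \le\ \int_\Omega |f|^{p-2}f\,(-\Delta)^{\alpha/2}f\,dx .
\]

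Next I would insert the kernel representation \eqref{eq:2.1}. The kernel $\mathcal{K}(x,y):=C_{\alpha,d}\sum_{k\in\mathbb{Z}^d}|x-y+k|^{-(d+\alpha)}$ (which is just $C_{\alpha,d}|x-y|^{-(d+\alpha)}$ in the case $\Omega=\mathbb{R}^d$) is symmetric in $x\leftrightarrow y$, since $k\mapsto -k$ is a bijection of $\mathbb{Z}^d$. Multiplying \eqref{eq:2.1} by $|f(x)|^{p-2}f(x)$, integrating in $x$, swapping $x$ and $y$ and averaging yields
\[
\int_\Omega |f|^{p-2}f\,(-\Delta)^{\alpha/2}f\,dx=\tfrac12\iint \mathcal{K}(x,y)\big(|f(x)|^{p-2}f(x)-|f(y)|^{p-2}f(y)\big)\big(f(x)-f(y)\big)\,dx\,dy,
\]
and likewise with $g=|f|^{p/2}$,
\[
\int_\Omega g\,(-\Delta)^{\alpha/2}g\,dx=\tfrac12\iint \mathcal{K}(x,y)\big(|f(x)|^{p/2}-|f(y)|^{p/2}\big)^2\,dx\,dy .
\]
Since $\mathcal K\ge 0$, it is enough to establish the pointwise inequality
\[
\tfrac{2}{p}\big(|a|^{p/2}-|b|^{p/2}\big)^2\ \le\ \big(|a|^{p-2}a-|b|^{p-2}b\big)(a-b),\qquad a,b\in\mathbb{R}.
\]
To see this, assume without loss of generality $a\ge b$, and set $\psi(t)=|t|^{p/2}$, $\phi(t)=|t|^{p-2}t$; for $p\ge2$ both are $C^1$ on $\mathbb{R}$ with $\psi'(t)^2=\tfrac{p^2}{4}|t|^{p-2}$ and $\phi'(t)=(p-1)|t|^{p-2}$. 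By Cauchy--Schwarz, $\big(\psi(a)-\psi(b)\big)^2=\big(\int_b^a\psi'(t)\,dt\big)^2\le (a-b)\int_b^a\psi'(t)^2\,dt=(a-b)\tfrac{p^2}{4}\int_b^a|t|^{p-2}\,dt$, while $(\phi(a)-\phi(b))(a-b)=(a-b)(p-1)\int_b^a|t|^{p-2}\,dt$; since $\tfrac{2}{p}\cdot\tfrac{p^2}{4}=\tfrac{p}{2}\le p-1$ for $p\ge2$, the claim follows. The endpoint cases are immediate: for $\alpha=0$ the inequality is just $\tfrac{2}{p}\le1$, and for $\alpha=2$ one integrates by parts, uses $\nabla|f|^{p/2}=\tfrac{p}{2}|f|^{p/2-1}\,\mathrm{sgn}(f)\,\nabla f$, and is left again with the arithmetic $\tfrac{p}{2}\le p-1$.

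The conceptual heart of the argument is the elementary inequality above, so the step I expect to require the most care is not that inequality but the justification of the symmetrization: convergence of the principal-value double integrals and legitimacy of Fubini and the swap $x\leftrightarrow y$ for a general $f$ satisfying only $f,(-\Delta)^{\alpha/2}f\in L^p$. I would handle this by a density argument, approximating $f$ by smooth functions (or by truncating the kernel away from the diagonal, using that $\phi,\psi\in C^1$ makes the second differences controllable and $\mathcal K$ locally integrable against them, then passing to the limit), and isolate it as a short preliminary reduction; the core estimates also appear in \cite{Cordoba.2004,Ju.2005}.
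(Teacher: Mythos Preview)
Your argument is correct and is precisely the standard route to this inequality. The paper itself does not supply a proof of Lemma~\ref{lem:2.1}; it simply records the statement and refers the reader to \cite{Ascasibar.2013,Cordoba.2004,Ju.2005,Shi.2019}, and your symmetrization-plus-pointwise-inequality scheme is exactly what one finds in \cite{Cordoba.2004,Ju.2005}. One small remark: for $p=2$ the function $\psi(t)=|t|$ is not $C^1$ at the origin, so your Cauchy--Schwarz step should be stated for $p>2$, with $p=2$ handled directly (there the pointwise bound is just $(|a|-|b|)^2\le(a-b)^2$); otherwise nothing needs changing.
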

\begin{lemma}\label{lem:2.2}
Suppose $0< \alpha<2, \Omega=\mathbb{R}^d, \mathbb{T}^d$ and $f\in \mathcal{S}(\Omega)$. Then
$$
\int_{\Omega}(-\Delta)^{\frac{\alpha}{2}}f(x)dx=0.
$$
\end{lemma}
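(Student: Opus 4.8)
The plan is to prove that $\int_\Omega (-\Delta)^{\frac{\alpha}{2}} f\, dx = 0$ for $f \in \mathcal{S}(\Omega)$, treating the two cases $\Omega = \mathbb{R}^d$ and $\Omega = \mathbb{T}^d$ separately but by the same underlying mechanism. The cleanest conceptual route is via the Fourier side: by the definition in (\ref{eq:1.2}), $\mathcal{F}\big((-\Delta)^{\frac{\alpha}{2}}f\big)(\xi) = |\xi|^\alpha \hat f(\xi)$, and integrating a function over all of space is exactly evaluating its Fourier transform at the zero frequency, i.e. $\int_\Omega (-\Delta)^{\frac{\alpha}{2}} f\, dx = \mathcal{F}\big((-\Delta)^{\frac{\alpha}{2}}f\big)(0) = |\xi|^\alpha \hat f(\xi)\big|_{\xi=0}$. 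Since $0 < \alpha < 2$, the symbol $|\xi|^\alpha$ vanishes at $\xi = 0$, so the product is $0$, giving the claim. On the torus the same argument applies using Fourier series: the zeroth Fourier coefficient of $(-\Delta)^{\frac{\alpha}{2}}f$ equals the mean, and the $k=0$ mode is annihilated by the symbol $|k|^\alpha = 0$.

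Because the statement is phrased in terms of the kernel representation (\ref{eq:2.1}) on $\mathbb{T}^d$, I would also supply a direct real-variable proof that does not invoke the Fourier characterization, to keep the argument self-contained with the definition actually used in the paper. First I would insert the kernel formula: $\int_{\mathbb{T}^d} (-\Delta)^{\frac{\alpha}{2}} f(x)\, dx = C_{\alpha,d}\int_{\mathbb{T}^d}\sum_{k\in\mathbb{Z}^d}\int_{\mathbb{T}^d}\frac{f(x)-f(y)}{|x-y+k|^{d+\alpha}}\, dy\, dx$. The goal is then to justify interchanging the $x$ and $y$ integrations and observe the antisymmetry of the integrand under swapping $x \leftrightarrow y$ (with a corresponding relabeling of the lattice sum $k \mapsto -k$): the kernel $|x-y+k|^{-(d+\alpha)}$ is symmetric while the numerator $f(x)-f(y)$ is antisymmetric, so the double integral cancels to zero. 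For $\Omega = \mathbb{R}^d$ the same symmetrization works without the lattice sum, writing the principal-value integral symmetrically in $x$ and $y$.

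The main obstacle is the justification of Fubini and of the principal-value manipulation near the singularity $x = y$ (equivalently $k=0$), where the kernel is non-integrable. Because $f \in \mathcal{S}(\Omega)$, near the diagonal one has $|f(x)-f(y)| \lesssim |x-y|$, which beats the singularity only down to order $|x-y|^{1-d-\alpha}$, still non-integrable for $\alpha \geq 1$; the cancellation that makes the principal value finite is essential and must be handled with care. The standard fix is to split each inner integral into the region $|x-y| \geq \delta$ and $|x-y| < \delta$, use the principal-value definition so that the near-diagonal part is interpreted symmetrically (the odd part of $f(x)-f(y)$ around $y=x$ integrates to zero against the even kernel, leaving an $O(\delta^{2-\alpha})$ remainder that vanishes as $\delta \to 0$), and then apply Fubini on the uniformly integrable far region $|x-y|\geq\delta$ before passing to the limit. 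Once the integrals are absolutely convergent on each truncated region, the antisymmetry argument applies cleanly and the limit $\delta \to 0$ yields zero.

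Given the routine nature of these estimates, I would present the Fourier-side computation as the primary proof (it is two lines and rigorous once one knows $\widehat{f} \in \mathcal{S}$ and $|\xi|^\alpha$ is continuous at the origin with value $0$), and relegate the kernel-based symmetrization to a remark or a short alternative, citing the integrability/principal-value bookkeeping above. The key points to verify in writing are: (i) $\mathcal{F}\big((-\Delta)^{\frac{\alpha}{2}}f\big)$ is continuous at $\xi=0$, which holds since $(-\Delta)^{\frac{\alpha}{2}}f \in L^1$ for Schwartz $f$; and (ii) $\lim_{\xi\to 0}|\xi|^\alpha\hat f(\xi) = 0$, which is immediate from boundedness of $\hat f$ and $\alpha > 0$.
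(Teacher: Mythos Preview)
Your proposal is correct. The paper does not actually give a proof of this lemma: it is listed among several preliminary results with the remark that ``details can be referred to'' the cited references \cite{Ascasibar.2013, Cordoba.2004, Ju.2005, Shi.2019}. Your Fourier-side argument is the standard one-line proof, and your alternative kernel-based symmetrization is also correct, with the caveats you flagged about handling the principal value near the diagonal. Either would be an acceptable self-contained proof to insert.
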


\vskip .1in

\begin{lemma}\label{lem:2.3}
Let $f\in \mathcal{S}(\mathbb{T}^d)$ and denote by $\overline{x}$ the point such that
$$
f(\overline{x})=\max_{x\in \mathbb{T}^d}f(x),
$$
and $f(\overline{x})>0$. Then we have the following
\begin{equation}\label{eq:2.5}
(-\Delta)^{\frac{\alpha}{2}}f(\overline{x})\geq C(\alpha,d,p)\frac{f(\overline{x})^{1+\frac{p\alpha}{d}}}{\|f\|_{L^p}^{\frac{p\alpha}{d}}},
\end{equation}
or
\begin{equation}\label{eq:2.6}
f(\overline{x})\leq C(d,p)\|f\|_{L^p}.
\end{equation}
\end{lemma}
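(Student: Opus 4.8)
\textbf{Proof proposal for Lemma \ref{lem:2.3}.}

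The plan is to dichotomize according to the behavior of $f$ near its maximum point $\overline{x}$. Using the kernel representation \eqref{eq:2.1} evaluated at $\overline{x}$, and dropping the periodization sum $k$ (all terms with $k\ne 0$ are nonnegative since $f(\overline{x})$ is the maximum), we get the lower bound
$$
(-\Delta)^{\frac{\alpha}{2}}f(\overline{x})\geq C_{\alpha,d}\int_{\mathbb{T}^d}\frac{f(\overline{x})-f(y)}{|\overline{x}-y|^{d+\alpha}}\,dy,
$$
where the integrand is nonnegative. The idea is to restrict the integral to an annulus or ball of a radius $r$ to be optimized. On the region $\{|\overline{x}-y|\le r\}$ we would want a lower bound on $f(\overline{x})-f(y)$; the cleanest way is to instead keep only the part where $f(y)$ is ``small'', say $f(y)\le \frac12 f(\overline{x})$, so that $f(\overline{x})-f(y)\ge \frac12 f(\overline{x})$ there.

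First I would introduce the superlevel set $E=\{y\in\mathbb{T}^d: f(y)> \tfrac12 f(\overline{x})\}$. By Chebyshev's inequality, $|E|\le \big(\tfrac12 f(\overline{x})\big)^{-p}\|f\|_{L^p}^p = 2^p f(\overline{x})^{-p}\|f\|_{L^p}^p$. Now fix $r>0$ and consider the ball $B=\{|\overline{x}-y|<r\}\subset\mathbb{T}^d$ (valid once $r$ is below the diameter of the torus). On $B\setminus E$ we have $f(\overline{x})-f(y)\ge \tfrac12 f(\overline{x})$ and $|\overline{x}-y|^{-(d+\alpha)}\ge r^{-(d+\alpha)}$, hence
$$
(-\Delta)^{\frac{\alpha}{2}}f(\overline{x})\;\ge\; C_{\alpha,d}\int_{B\setminus E}\frac{f(\overline{x})-f(y)}{|\overline{x}-y|^{d+\alpha}}\,dy\;\ge\; C_{\alpha,d}\,\frac{f(\overline{x})}{2}\,r^{-(d+\alpha)}\,\big(|B|-|E|\big).
$$
Since $|B|=c_d r^d$ with $c_d$ the volume of the unit ball, we choose $r$ so that $|E|\le\tfrac12|B|$, i.e. $c_d r^d\ge 2|E|$, which amounts to $r^d \ge (2/c_d)\,2^p f(\overline{x})^{-p}\|f\|_{L^p}^p$. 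Take $r$ equal (up to a constant) to this threshold value, so that $|B|-|E|\ge\tfrac12|B|=\tfrac12 c_d r^d$. Then
$$
(-\Delta)^{\frac{\alpha}{2}}f(\overline{x})\;\ge\; \frac{C_{\alpha,d}c_d}{4}\,f(\overline{x})\,r^{-\alpha}\;=\;C(\alpha,d,p)\,f(\overline{x})\,\big(f(\overline{x})^{-p}\|f\|_{L^p}^p\big)^{-\alpha/d}\;=\;C(\alpha,d,p)\,\frac{f(\overline{x})^{1+\frac{p\alpha}{d}}}{\|f\|_{L^p}^{\frac{p\alpha}{d}}},
$$
which is exactly \eqref{eq:2.5}. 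The one caveat is that the radius $r$ chosen above must not exceed (a fixed fraction of) the diameter of $\mathbb{T}^d$, since otherwise the ball $B$ does not fit in a fundamental domain and the simple lower bound on the kernel breaks down. This is precisely the alternative \eqref{eq:2.6}: if $r$ computed from the formula is $\gtrsim 1$, that forces $f(\overline{x})^{-p}\|f\|_{L^p}^p\gtrsim 1$, i.e. $f(\overline{x})\le C(d,p)\|f\|_{L^p}$, and we are in the second case. So the proof splits cleanly: either the natural length scale $r$ is small (and \eqref{eq:2.5} holds by the annulus estimate) or it is not (and \eqref{eq:2.6} holds trivially by Chebyshev).

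The main obstacle I anticipate is bookkeeping the torus geometry: on $\mathbb{T}^d$ one must work within a fundamental domain of side $1$, so balls of radius comparable to $1$ are not available, and the periodization terms in \eqref{eq:2.1} must be handled (here they only help, being nonnegative at the maximum, but one should say so explicitly). A secondary technical point is making the constant $C(\alpha,d,p)$ uniform as $\alpha\to 2^-$ or $\alpha\to 0^+$ — this is controlled by $C_{\alpha,d}$ and $c_d$ and does not affect the stated form of the lemma, but if uniformity in $\alpha$ is needed elsewhere one should track it. Otherwise the argument is a routine ``good set / bad set'' estimate of the type used for the surface quasi-geostrophic equation, and no genuinely hard step is expected.
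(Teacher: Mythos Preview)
Your argument is correct and is essentially the standard ``good set/bad set'' proof of the nonlinear maximum principle: bound the measure of the superlevel set $\{f>\tfrac12 f(\overline{x})\}$ by Chebyshev in $L^p$, restrict the singular integral to a ball of radius $r$ chosen so that the superlevel set occupies at most half the ball, and then optimize $r$; the torus dichotomy (\ref{eq:2.6}) arises exactly when the optimal $r$ would exceed the injectivity radius. This is precisely the approach in the references the paper defers to (Remark~\ref{rem:4} cites \cite{Shi.2019} and \cite{Rafael.2016}; the paper does not give its own proof of Lemma~\ref{lem:2.3}).

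One minor remark on the comparison: the paper's in-house proof of the closely related Lemma~\ref{lem:6.1} (the derivative version) uses a slightly different variant --- a smooth cutoff $\chi(\cdot/R)$ together with integration by parts to transfer the derivative onto the kernel, rather than the Chebyshev superlevel-set argument. That cutoff technique is natural when one only has $\|f\|_{L^\infty}$ available (no $L^p$ control of the derivative itself), whereas your Chebyshev approach is the cleaner route when an $L^p$ norm of the function is the input, as here. Both originate in Constantin--Vicol~\cite{Constantin.2012}; your choice is the right one for this lemma.
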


\vskip .1in

\begin{remark}\label{rem:4}
The proof of Lemma \ref{lem:2.3} can be referred to \cite{Shi.2019},  and the case of $\mathbb{R}^d$ can be seen in \cite{Rafael.2016}. Nonlinear maximum principle is an important technique for establishing  the $L^\infty$ estimate of solution to Equation (\ref{eq:1.1}), the more details can be seen in Section 5.
\end{remark}

\vskip .1in

\subsection{Functional spaces and notations} We write $L^p(\mathbb{T}^d)$ for the usual  Lebesgue space
$$
L^p(\mathbb{T}^d)=\left\{f\ measurable\ s.t. \int_{\mathbb{T}^d}|f(x)|^pdx<\infty \right\},
$$
the norm for the $L^p$ space is denoted as $\|\cdot\|_{L^p}$, it means
$$
\|f\|_{L^p}=\left( \int_{\mathbb{T}^d}|f|^pdx\right)^{\frac{1}{p}},
$$
with natural adjustment when $p=\infty$. The homogeneous Sobolev norm  $\|\cdot\|_{\dot{W}^{s,p}}$ is defined by
$$
\|f\|_{\dot{W}^{s,p}}=\|D^{s}f\|_{L^p},
$$
where $s\geq 0, p\geq 1$ and $ D$ is any partial  derivative. The non-homogeneous Sobolev norm  $\|\cdot\|_{W^{s,p}}$ is
$$
\|f\|_{W^{s,p}}=\|f\|_{\dot{W}^{s,p}}+\|f\|_{L^p}.
$$
In particular, if $p=2$, the homogeneous Sobolev norm  $\|\cdot\|_{\dot{W}^{s,2}}=\|\cdot\|_{\dot{H}^{s}}$, and
\begin{equation}\label{eq:2.7}
\|f\|^2_{\dot{H}^{s}(\mathbb{T}^d)}=\|(-\Delta)^{\frac{s}{2}}f\|^2_{L^2(\mathbb{T}^d)}=\sum_{k\in \mathbb{Z}^d\backslash \{0\}}|k|^{2s}|\hat{f}(k)|^2.
\end{equation}
The non-homogeneous Sobolev norm  $\|\cdot\|_{W^{s,2}}=\|\cdot\|_{H^{s}}$, and
$$
\|f\|_{H^{s}}=\|f\|_{L^2}+\|f\|_{\dot{H}^{s}}.
$$
Let us denote by $P_N$ the orthogonal projection operator on the subspace formed by Fourier modes $|k|\leq N$:
\begin{equation}\label{eq:2.8}
P_Nf(x)=\sum_{|k|\leq N}\hat{f}(k)e^{ikx}.
\end{equation}
And denote
$$f\sim g,
$$
it means that there exists positive constant $\gamma_1,\gamma_2$, such that
$$
\gamma_1 f(x)\leq g(x)\leq\gamma_2 f(x).
$$

\vskip .05in

In this paper,  some basic energy inequalities are used in the proof, we can refer to \cite{Evans.2010,Friedman.1969} for more details.

\vskip .05in

\subsection{Mixing  effect}
Given an incompressible vector field $u$ which is Lipschitz in spatial variables, if we defined the trajectories map by (see \cite{Constantin.2008, Kiselev.2016,Shi.2019})
$$
\frac{d}{dt} \Phi_t(x)=u(\Phi_t(x)), \quad \Phi_0(x)=x.
$$
Then define a unitary operator $U^t$ acting on $L^2(\mathbb{T}^d)$ as follows
$$
U^t f(x)=f(\Phi_t^{-1}(x)), \ \  f(x)\in L^2(\mathbb{T}^d).
$$
We take
\begin{equation}\label{eq:2.9}
X=\left\{ f\in L^2(\mathbb{T}^d)\big| \int_{\mathbb{T}^d}f(x)dx=0\right\},
\end{equation}
and for a fixed $t>0$, we denote
\begin{equation}\label{eq:2.10}
U=U^t.
\end{equation}

We recall the definition of weakly mixing (see \cite{Constantin.2008,Kiselev.2016}).
\begin{define}\label{def:2.4}
The incompressible flow $u$ is called weakly mixing, if $u=u(x)$ is smooth and the spectrum of the operator $U$ is purely continuous on $X$.
\end{define}

\begin{remark}\label{rem:5}
The incompressible flow $u$ is called $\alpha$-relaxation enhancing flow (see \cite{Constantin.2008,Hopf.2018}) if the corresponding unitary operator $U$ does not have any non-constant eigenfunctions in $\dot{H}^{\frac{\alpha}{2}}(\mathbb{T}^d)$, where $0<\alpha\leq2$. Obviously, the weakly mixing is $\alpha$-relaxation enhancing flow for any $0<\alpha\leq2$.
\end{remark}

\vskip .1in

\begin{remark}\label{rem:6}
The examples of relaxation enhancing flow and weakly mixing can be referred to \cite{Constantin.2008,Fayad.2006,Fayad.2002,Hopf.2018,Kiselev.2016}.
\end{remark}

\vskip .1in

Next, we introduce enhanced dissipation effect of mixing via resolvent estimate (see \cite{Wei.2019}). Let $\mathcal{H}$ be a Hilbert space, we denote by $\|\cdot\|$ the norm and by $\langle,\rangle$ the inner product. Let $H$ be a linear operator in $\mathcal{H}$ with the domain $D(H)$, it is defined as follows (see \cite{Kato.1995, Wei.2019})

\vskip .05in

\begin{define}\label{def:2.5}
A closed operator $H$ in a Hilbert space $\mathcal{H}$ is called m-accretive if the left open half-plane is contained in the resolvent set $\rho(H)$ with
$$
(H+\lambda)^{-1}\in \mathcal{B}(X),\ \ \  \|(H+\lambda)^{-1}\|\leq (Re \lambda)^{-1},\ \ \  Re \lambda>0,
$$
where $X$ is defined  in (\ref{eq:2.9}), and $\mathcal{B}(X)$ is the set of bounded linear operators on $X$.
\end{define}

\vskip .05in

\begin{remark}\label{rem:7}
An m-accretive operator $H$ is  accretive and densely defined (see \cite{Kato.1995}), namely, $D(H)$ is dense in $X$ and $Re \langle Hf,f\rangle\geq0$ for $f\in D(H)$.
\end{remark}

\vskip .05in

We denote $e^{-tH}$ is a  semigroup with $-H$ as generator and define
\begin{equation}\label{eq:2.11}
\Psi(H)=\inf\{\|(H-i\lambda)f\|: f\in D(H), \lambda\in \mathbb{R}, \|f\|=1 \}.
\end{equation}

\vskip .05in

The following result is the Gearchart-Pr\"{u}ss type theorem for accretive operators (see \cite{Bedrossian.201701,Wei.2019}).

\vskip .05in

\begin{lemma}\label{lem:2.6}
Let $H$ be an m-accretive operator in a Hilbert space $\mathcal{H}$. Then for any $t\geq 0$, we have
$$
\|e^{-tH}\|_{L^2\rightarrow L^2}\leq e^{-t\Psi(H)+\frac{\pi}{2}}.
$$
\end{lemma}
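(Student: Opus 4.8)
The statement is a quantitative Gearhart--Pr\"uss theorem, so the plan is to convert the hypothesis $\Psi(H)>0$ into a uniform resolvent bound, propagate that bound into a vertical strip to the right of the imaginary axis, and then read off both the exponential rate $\Psi(H)$ and the universal constant from a Bromwich representation of the semigroup. First I would dispose of the trivial case: if $\Psi(H)=0$ then, since $H$ is m-accretive (Definition \ref{def:2.5}, Remark \ref{rem:7}), the Lumer--Phillips theorem makes $e^{-tH}$ a contraction semigroup, whence $\|e^{-tH}\|\le 1\le e^{\pi/2}$. So assume $\Psi:=\Psi(H)>0$. The key reformulation is that for each fixed real $\lambda$ one has $\inf_{\|f\|=1}\|(H-i\lambda)f\|=\|(H-i\lambda)^{-1}\|^{-1}$; taking the infimum over $\lambda$ in the definition \eqref{eq:2.11} gives $\Psi=\bigl(\sup_{\lambda\in\mathbb{R}}\|(H-i\lambda)^{-1}\|\bigr)^{-1}$, i.e. $\|(H-i\lambda)^{-1}\|\le \Psi^{-1}$ for every $\lambda\in\mathbb{R}$.

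Next I would extend this estimate off the axis. For $z=\gamma+i\lambda$ with $0\le\gamma<\Psi$ I would factor $z+H=(H+i\lambda)\bigl(I+\gamma (H+i\lambda)^{-1}\bigr)$; since $\gamma\|(H+i\lambda)^{-1}\|\le\gamma/\Psi<1$, the Neumann series converges and yields the strip bound $\|(z+H)^{-1}\|\le(\Psi-\gamma)^{-1}$, uniformly in $\lambda$. On the axis and in the left half-plane the accretive bound of Definition \ref{def:2.5} already applies, so the resolvent of the generator $-H$ is uniformly bounded on every vertical line $\mathrm{Re}\,z=\gamma<\Psi$, with an explicit constant degenerating like $(\Psi-\gamma)^{-1}$ as $\gamma\uparrow\Psi$.

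Finally I would feed this into the inverse Laplace (Bromwich) representation
\[
e^{-tH}f=\frac{e^{\gamma t}}{2\pi}\int_{\mathbb{R}}e^{i\lambda t}(H+\gamma+i\lambda)^{-1}f\,d\lambda,
\]
valid for $f$ in the dense domain $D(H^2)$, where $(z+H)^{-1}f=O(|z|^{-2})$ guarantees absolute convergence. A naive $L^1$ bound on this contour diverges, so the decisive step is to recognize $\lambda\mapsto(H+\gamma+i\lambda)^{-1}f$ as the Fourier transform of $\tau\mapsto e^{-\gamma\tau}e^{-\tau H}f\,\mathbf{1}_{\tau\ge0}$ and to combine Plancherel with the contraction bound $\|e^{-\tau H}\|\le1$ coming from m-accretivity; optimizing the free offset $\gamma\in(0,\Psi)$ then produces the exponential factor $e^{-\Psi t}$ together with the universal constant, the latter emerging from the elementary integral $\int_0^\infty(1+x^2)^{-1}\,dx=\pi/2$. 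Letting $\gamma\uparrow\Psi$ and extending from $D(H^2)$ to all of $\mathcal{H}$ by density and strong continuity would give $\|e^{-tH}\|_{L^2\to L^2}\le e^{-t\Psi(H)+\pi/2}$. I expect the main obstacle to be exactly this last extraction of the sharp, operator-independent constant $\pi/2$: the contour and Plancherel estimates must be arranged so that the constant depends neither on $\Psi$ nor on $H$, which is what distinguishes this quantitative bound from the classical, non-explicit Gearhart--Pr\"uss statement.
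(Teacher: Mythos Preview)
The paper does not prove Lemma~\ref{lem:2.6}; it is quoted from \cite{Wei.2019} (see also \cite{Bedrossian.201701}), so there is no in-paper argument to compare against beyond that citation.

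Your first three moves are fine, but the last step has a genuine gap. In the Bromwich formula you wrote, the prefactor is $e^{\gamma t}$, so to produce the decay $e^{-\Psi t}$ you would need $\gamma$ near $-\Psi$, not $\gamma\in(0,\Psi)$. On the other hand, your Plancherel identification of $\lambda\mapsto(H+\gamma+i\lambda)^{-1}f$ with the Fourier transform of $\tau\mapsto e^{-\gamma\tau}e^{-\tau H}f\,\mathbf 1_{\tau\ge0}$ requires that weighted orbit to lie in $L^2(0,\infty;\mathcal H)$; from the mere contraction bound $\|e^{-\tau H}\|\le1$ this holds only when $\gamma>0$, while for $\gamma<0$ it is equivalent to the exponential decay you are trying to prove. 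The two ingredients therefore cannot be combined as described, and ``optimising $\gamma\in(0,\Psi)$'' cannot yield a negative exponential rate. Nor does the resolvent bound $\|(H+\gamma+i\lambda)^{-1}\|\le(\Psi-\gamma)^{-1}$ help with an $L^1$ or $L^2$ estimate along the shifted line, since it is constant in $\lambda$.

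This circularity is exactly why the quantitative statement is subtler than the classical Gearhart--Pr\"uss theorem. The argument in \cite{Wei.2019} avoids it by working on a finite time window, so that all Fourier manipulations involve compactly supported $\mathcal H$-valued functions with no convergence issue, exploiting the monotonicity of $t\mapsto\|e^{-tH}f\|$ coming from accretivity, and deriving an integral inequality from which the constant $e^{\pi/2}$ is extracted. Your closing paragraph correctly identifies the sharp, operator-independent constant as the crux, but the mechanism you sketch does not reach it.
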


\vskip .05in

We consider linear equation with fractional dissipation and mixing effect
\begin{equation}\label{eq:2.12}
\partial_t\eta+(-\Delta)^{\frac{\alpha}{2}}\eta+Au\cdot \nabla \eta=0,\ \ \  \eta(0,x)=\rho_0(x), \ \ x\in \mathbb{T}^d, \ \ t\geq0,
\end{equation}
where $u=u(x)$ is weakly mixing. Let us denote

\vskip .05in

\begin{equation}\label{eq:2.13}
H^\alpha_A=(-\Delta)^{\frac{\alpha}{2}}+Au\cdot \nabla,\ \ \  0<\alpha<2,
\end{equation}
where
$$
D(H^\alpha_A)=H^{2}\cap X.
$$
If $\rho_0\in D(H^\alpha_A)$, then the solution of  Equation (\ref{eq:2.12}) can be expressed by semigroup method, namely
\begin{equation}\label{eq:2.14}
\eta(t,x)=e^{-tH^\alpha_A}\rho_0(x).
\end{equation}

\vskip .05in

\begin{remark}\label{rem:8}
According to the Definition \ref{def:2.5}, the $H^\alpha_A$ in (\ref{eq:2.13}) is a m-accretive operator.
\end{remark}

\vskip .05in

If we define the $\Psi(H^\alpha_A)$ by (\ref{eq:2.11}) and (\ref{eq:2.13}), the similar result with  Lemma \ref{lem:2.6} is as follows
\begin{lemma}\label{lem:2.7}
Let $H^\alpha_A$ be defined in (\ref{eq:2.13}), then for any $t\geq 0$, we have
\begin{equation}\label{eq:2.15}
\|e^{-tH^\alpha_A}\|_{L^2\rightarrow L^2}\leq e^{-t\Psi(H^\alpha_A)+\frac{\pi}{2}}.
\end{equation}
\end{lemma}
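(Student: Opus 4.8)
The plan is to show that the operator $H^\alpha_A$ defined in \eqref{eq:2.13} is m-accretive on the Hilbert space $\mathcal H = X$ (the mean-zero subspace of $L^2(\mathbb T^d)$), and then invoke Lemma \ref{lem:2.6} verbatim. So the whole content is the verification of the hypotheses of the Gearhart--Pr\"uss type theorem, plus a density/approximation remark ensuring that the semigroup representation \eqref{eq:2.14} and the estimate extend from the core $D(H^\alpha_A) = H^2 \cap X$ to all of $X$.

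First I would check accretivity. For $f \in D(H^\alpha_A) = H^2\cap X$, write $\langle H^\alpha_A f, f\rangle = \langle (-\Delta)^{\frac{\alpha}{2}} f, f\rangle + A\langle u\cdot\nabla f, f\rangle$. Taking real parts, the dissipative term contributes $\|(-\Delta)^{\frac{\alpha}{4}} f\|_{L^2}^2 \geq 0$ by \eqref{eq:2.7} (or Lemma \ref{lem:2.1} with $p=2$), while the transport term is skew-symmetric: since $u$ is smooth and divergence-free, $\operatorname{Re}\langle u\cdot\nabla f, f\rangle = \tfrac12\int_{\mathbb T^d} u\cdot\nabla(|f|^2)\,dx = -\tfrac12\int_{\mathbb T^d}(\nabla\cdot u)|f|^2\,dx = 0$. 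Hence $\operatorname{Re}\langle H^\alpha_A f, f\rangle \geq 0$, and $D(H^\alpha_A)$ is dense in $X$ since trigonometric polynomials with zero mean are. Next I would check the resolvent bound: for $\operatorname{Re}\lambda > 0$ and $f \in D(H^\alpha_A)$, from $(H^\alpha_A + \lambda)f = g$ we get, pairing with $f$ and taking real parts, $\operatorname{Re}\lambda\,\|f\|^2 \leq \operatorname{Re}\lambda\,\|f\|^2 + \|(-\Delta)^{\frac{\alpha}{4}}f\|^2 = \operatorname{Re}\langle g, f\rangle \leq \|g\|\,\|f\|$, so $\|f\| \leq (\operatorname{Re}\lambda)^{-1}\|g\|$. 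Together with a standard argument that $H^\alpha_A + \lambda$ is surjective onto $X$ (e.g. via the Lax--Milgram theorem applied to the sesquilinear form $a(f,v) = \langle (-\Delta)^{\frac\alpha4}f, (-\Delta)^{\frac\alpha4}v\rangle + A\langle u\cdot\nabla f, v\rangle + \lambda\langle f,v\rangle$ on $\dot H^{\alpha/2}\cap X$, whose coercivity follows from the computation above, noting the transport term drops when $v=f$), this shows $H^\alpha_A$ is closed with left half-plane in $\rho(H^\alpha_A)$ and the required bound, i.e. $H^\alpha_A$ is m-accretive in the sense of Definition \ref{def:2.5}. This justifies Remark \ref{rem:8}.

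With m-accretivity in hand, Lemma \ref{lem:2.6} applied to $\mathcal H = X$ and $H = H^\alpha_A$ yields $\|e^{-tH^\alpha_A}\|_{L^2\to L^2}\leq e^{-t\Psi(H^\alpha_A) + \frac\pi2}$ for all $t\geq 0$, where $\Psi(H^\alpha_A)$ is as in \eqref{eq:2.11} with the operator \eqref{eq:2.13}. Strictly, the semigroup $e^{-tH^\alpha_A}$ should first be defined as the contraction semigroup generated by $-H^\alpha_A$ via Hille--Yosida/Lumer--Phillips (legitimate since $H^\alpha_A$ is m-accretive), and then \eqref{eq:2.14} holds for $\rho_0 \in D(H^\alpha_A)$ and extends to $\rho_0 \in X$ by density; the operator norm bound is meaningful on all of $X$ regardless.

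The main obstacle I expect is not conceptual but a bookkeeping point: one must make sure the sesquilinear form / resolvent argument is carried out on the correct space, namely the mean-zero subspace $X$ rather than all of $L^2$, because on the full space the zero Fourier mode is in the kernel of $(-\Delta)^{\alpha/2}$ and of $u\cdot\nabla$, so the resolvent bound $\|(H+\lambda)^{-1}\|\leq(\operatorname{Re}\lambda)^{-1}$ would fail to give decay and $\Psi$ would be zero; restricting to $X$ is exactly what Definition \ref{def:2.5} and \eqref{eq:2.9} are set up to do, and one should check $e^{-tH^\alpha_A}$ preserves $X$ (it does, since both $(-\Delta)^{\alpha/2}$ and $u\cdot\nabla$ preserve the zero-mean condition by Lemma \ref{lem:2.2} and the divergence-free condition). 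Everything else is a routine transcription of the abstract Gearhart--Pr\"uss statement.
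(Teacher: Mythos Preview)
Your proposal is correct and follows exactly the route the paper takes: the paper does not give an explicit proof of Lemma~\ref{lem:2.7} but simply asserts (Remark~\ref{rem:8}) that $H^\alpha_A$ is m-accretive and presents the lemma as the specialization of Lemma~\ref{lem:2.6} to $H=H^\alpha_A$ on $\mathcal H=X$. Your write-up supplies the details behind Remark~\ref{rem:8} (accretivity via skew-symmetry of the transport term plus positivity of the fractional Laplacian, and the resolvent bound via Lax--Milgram) that the paper leaves implicit, so there is nothing to add or correct.
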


If $u$ is weakly mixing, the estimate of $\Psi(H^\alpha_A)$ is as follows

\vskip .05in

\begin{lemma}\label{lem:2.8}
Let $0< \alpha<2$, $u$ is weakly mixing, then
$$
\lim_{A\rightarrow +\infty}\Psi(H^\alpha_A)=+\infty,
$$
where $\Psi(H^\alpha_A)$ be defined by (\ref{eq:2.11}) and (\ref{eq:2.13}).
\end{lemma}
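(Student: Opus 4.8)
The plan is to argue by contradiction and compactness; notably no RAGE theorem enters. Suppose the conclusion fails. Then there are $M>0$, a sequence $A_n\to+\infty$, functions $f_n\in D(H^\alpha_{A_n})=H^{2}\cap X$ with $\|f_n\|_{L^2}=1$, and $\lambda_n\in\mathbb{R}$ with
$$
\|(-\Delta)^{\frac{\alpha}{2}}f_n+A_nu\cdot\nabla f_n-i\lambda_n f_n\|_{L^2}\le M+1=:M'.
$$
Since $u$ is divergence free, $u\cdot\nabla$ is skew-adjoint on $X$, so $\langle u\cdot\nabla f_n,f_n\rangle$ is purely imaginary; also $-i\lambda_n f_n$ makes no contribution to the real part. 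Taking the real part of the inner product of the above quantity with $f_n$ therefore yields $\|f_n\|_{\dot{H}^{\alpha/2}}^{2}=\langle(-\Delta)^{\frac{\alpha}{2}}f_n,f_n\rangle\le M'$, so $\{f_n\}$ is bounded in $H^{\alpha/2}(\mathbb{T}^d)\cap X$.

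The delicate point is to control the spectral parameter. For fixed $f_n$ the optimal $\lambda$ in (\ref{eq:2.11}) is $\lambda_n=\mathrm{Im}\langle H^\alpha_{A_n}f_n,f_n\rangle=A_n\sigma_n$ with $\sigma_n:=\mathrm{Im}\langle u\cdot\nabla f_n,f_n\rangle\in\mathbb{R}$, and we may assume this choice. Next I would pair the displayed quantity with $(-\Delta)^{\frac{\alpha}{2}}f_n$ and take the real part: the $\lambda_n$-term drops since $\langle f_n,(-\Delta)^{\frac{\alpha}{2}}f_n\rangle$ is real, and the transport term becomes $\tfrac{A_n}{2}\langle[(-\Delta)^{\frac{\alpha}{2}},u\cdot\nabla]f_n,f_n\rangle$. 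Because $u\in C^\infty$, the commutator $[(-\Delta)^{\frac{\alpha}{2}},u\cdot\nabla]$ has order $\alpha$ (a Cordoba--Cordoba type estimate, cf. \cite{Cordoba.2004}), so its quadratic form is bounded by $C\|f_n\|_{\dot{H}^{\alpha/2}}^{2}\le CM'$. Hence $\|f_n\|_{\dot{H}^{\alpha}}^{2}\le M'\|f_n\|_{\dot{H}^{\alpha}}+CA_n$, which gives $\|f_n\|_{\dot{H}^{\alpha}}\le C\sqrt{A_n}$ for $n$ large. Dividing the displayed inequality by $A_n$ now shows
$$
\|u\cdot\nabla f_n-i\sigma_n f_n\|_{L^2}\le\frac{M'}{A_n}+\frac{1}{A_n}\|f_n\|_{\dot{H}^{\alpha}}\le\frac{M'}{A_n}+\frac{C}{\sqrt{A_n}}\longrightarrow 0 .
$$

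By the compact embedding $H^{\alpha/2}(\mathbb{T}^d)\hookrightarrow L^2(\mathbb{T}^d)$ there is a subsequence with $f_n\to f$ strongly in $L^2$, $\|f\|_{L^2}=1$ and $f\in X$; passing to a further subsequence, $\sigma_n\to\sigma_\infty\in[-\infty,+\infty]$. If $|\sigma_\infty|=\infty$, dividing the last display by $\sigma_n$ gives $\sigma_n^{-1}u\cdot\nabla f_n\to if$ in $L^2$, whereas for any $\phi\in C^\infty(\mathbb{T}^d)$ one has $\langle\sigma_n^{-1}u\cdot\nabla f_n,\phi\rangle=-\sigma_n^{-1}\langle f_n,u\cdot\nabla\phi\rangle\to0$, forcing $f=0$, a contradiction. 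Hence $\sigma_\infty\in\mathbb{R}$, and passing to the limit in $\langle u\cdot\nabla f_n,\phi\rangle=-\langle f_n,u\cdot\nabla\phi\rangle\to i\sigma_\infty\langle f,\phi\rangle$ shows $u\cdot\nabla f=i\sigma_\infty f$ in the distributional sense. Thus $f\in X$ is a nonzero eigenfunction of the skew-adjoint operator $u\cdot\nabla$, i.e. $f$ is an eigenfunction of the unitary operator $U=U^t$ on $X$, which contradicts the hypothesis that $u$ is weakly mixing (Definition \ref{def:2.4}). This contradiction proves the lemma.

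I expect the main obstacle to be exactly the treatment of the frequency $\lambda_n$: a priori it may grow proportionally to $A_n$, and the limiting relation $u\cdot\nabla f=i\sigma_\infty f$ is meaningful only once $\sigma_n=\lambda_n/A_n$ is shown to remain bounded. This is what the auxiliary bound $\|f_n\|_{\dot{H}^{\alpha}}\lesssim\sqrt{A_n}$ — which uses the smoothness of $u$ through the order-$\alpha$ commutator with $(-\Delta)^{\alpha/2}$ — together with the dichotomy on $\sigma_n$ is meant to resolve. Everything else is a routine contradiction-and-compactness scheme in the spirit of the relaxation-enhancing flow theory.
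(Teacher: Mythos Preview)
Your argument is correct and follows the same overall contradiction-and-compactness scheme as the paper. The one genuine difference is how you control the frequency parameter $\lambda_n/A_n$. You establish an auxiliary bound $\|f_n\|_{\dot H^\alpha}\lesssim\sqrt{A_n}$ via the commutator $[(-\Delta)^{\alpha/2},u\cdot\nabla]$ in order to obtain \emph{strong} $L^2$ convergence of $u\cdot\nabla f_n-i\sigma_n f_n\to 0$, and then run a dichotomy on $\sigma_n$. The paper avoids this entirely by working in the weak (test-function) formulation: it pairs $g_m=(H^\alpha_{A_m}-i\lambda_m)f_m$ with an arbitrary $f\in H^1$, divides by $A_m$, and observes that $\langle g_m,f\rangle/A_m$ and $\langle\Lambda^{\alpha/2}f_m,\Lambda^{\alpha/2}f\rangle/A_m$ vanish simply because the numerators are bounded. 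This yields $\langle u\cdot\nabla f_m,f\rangle - i\frac{\lambda_m}{A_m}\langle f_m,f\rangle\to 0$ for every test $f$, and since $\langle u\cdot\nabla f_m,f\rangle\to\langle u\cdot\nabla f_0,f\rangle$ by integration by parts and strong $L^2$ convergence of $f_m$, one reads off convergence of $\lambda_m/A_m$ by choosing a single test function $f_1$ with $\langle f_0,f_1\rangle\neq 0$. Thus the paper's route is shorter and needs no commutator or $\dot H^\alpha$ estimate; your route works too and has the minor advantage of producing the limiting eigenvalue equation in a strong sense, at the cost of invoking an order-$\alpha$ commutator bound for smooth $u$.
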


\vskip .05in

\begin{remark}\label{rem:9}
The idea of proof is from the \cite{Wei.2019}, and similar result have been proved in our other studies. For the convenience of reading, we give the proof of Lemma \ref{lem:2.8} in the Appendix.
\end{remark}

\vskip .05in

Next, we give a useful lemma for the estimation of operator $e^{-tH^\alpha_A}$, it is as follows
\begin{lemma}\label{lem:2.9}
Let $0<\alpha<2$, $u$ is weakly mixing. The $e^{-tH^\alpha_A}$ is a semigroup operator with $-H^\alpha_A$, where the $H^\alpha_A$ is defined by (\ref{eq:2.13}). If $A$ is large enough, then for any $t\geq0$ and $f\in D(H^\alpha_A)$, we have
$$
\|P_Ne^{-tH^\alpha_A}f\|_{L^2}\leq C\|P_N f\|_{L^2},
$$
where $P_N$ is defined in (\ref{eq:2.8}) and $C>1$ is a fixed constant.
\end{lemma}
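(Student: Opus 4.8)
The plan is to use the semigroup decay estimate of Lemma \ref{lem:2.7} together with Lemma \ref{lem:2.8} to absorb the low-frequency growth. First I would write $f = P_N f + (I-P_N)f$ and try to estimate $P_N e^{-tH^\alpha_A} f$. The natural starting point is to split $e^{-tH^\alpha_A} f = e^{-tH^\alpha_A}P_N f + e^{-tH^\alpha_A}(I-P_N)f$. For the second piece, the operator $(-\Delta)^{\alpha/2}$ is diagonal in Fourier modes with eigenvalue $\lambda_k^{\alpha/2}\ge \lambda_{N+1}^{\alpha/2}$ on the range of $I-P_N$, but $Au\cdot\nabla$ does not commute with $P_N$, so I cannot simply say $e^{-tH^\alpha_A}$ preserves frequency localization. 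Instead I would use Lemma \ref{lem:2.7}: $\|e^{-tH^\alpha_A}\|_{L^2\to L^2}\le e^{-t\Psi(H^\alpha_A)+\pi/2}$, and by Lemma \ref{lem:2.8}, for $A$ large we may assume $\Psi(H^\alpha_A)\ge 0$, so $\|e^{-tH^\alpha_A}\|_{L^2\to L^2}\le e^{\pi/2}$ for all $t\ge 0$. This already gives $\|e^{-tH^\alpha_A}f\|_{L^2}\le e^{\pi/2}\|f\|_{L^2}$, but that controls the full $L^2$ norm by the full $L^2$ norm, not $\|P_N e^{-tH^\alpha_A}f\|_{L^2}$ by $\|P_N f\|_{L^2}$.

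To get the sharper bound, I expect one needs a Duhamel-type argument. Write $w(t) = e^{-tH^\alpha_A}f$, so $\partial_t w + (-\Delta)^{\alpha/2}w + Au\cdot\nabla w = 0$. Apply $P_N$: since $[P_N,(-\Delta)^{\alpha/2}]=0$ but $[P_N, u\cdot\nabla]\ne 0$, we get
\begin{equation}\label{eq:duhamel-PN}
\partial_t (P_N w) + (-\Delta)^{\alpha/2}(P_N w) + A P_N(u\cdot\nabla w) = 0.
\end{equation}
The commutator term is the obstruction. A cleaner route is to bound $\|(I-P_N)e^{-tH^\alpha_A}f\|_{L^2}$ directly: on this piece one hopes the large eigenvalue $\lambda_{N+1}^{\alpha/2}$ of the dissipation forces exponential decay modulated by the $e^{\pi/2}$ factor, but again the advection mixes frequencies. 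The key realization is presumably the following: for the purpose of this lemma $C>1$ is allowed to be a fixed constant, so it suffices to show $\|P_N e^{-tH^\alpha_A}f\|_{L^2}\le C(\|P_Nf\|_{L^2} + \text{something that decays})$, and then absorb the decaying term. Concretely, split $f = g + h$ with $g = P_N f$, $h = (I-P_N)f$. Then $\|P_N e^{-tH^\alpha_A}g\|_{L^2}\le \|e^{-tH^\alpha_A}g\|_{L^2}\le e^{\pi/2}\|g\|_{L^2} = e^{\pi/2}\|P_N f\|_{L^2}$, which is of the desired form. For $h$, I would try to show $\|P_N e^{-tH^\alpha_A}h\|_{L^2}$ is small — but this need not be bounded by $\|P_N f\|_{L^2}$ at all, which suggests the lemma as stated may implicitly require a different splitting or an extra hypothesis (perhaps $f$ itself is frequency-localized in the application, i.e., $f = P_N f$).

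Given the paper's structure, I believe the intended argument is: assume without loss of generality (from the application, $f = \rho_0$ or a dyadic block) that the relevant input has $\|f\|_{L^2}\le C'\|P_N f\|_{L^2}$, OR use the monotonicity $\frac{d}{dt}\|e^{-tH^\alpha_A}f\|_{L^2}^2 = -2\langle (-\Delta)^{\alpha/2}w,w\rangle \le 0$ (using $\langle u\cdot\nabla w,w\rangle=0$ by incompressibility) to get $\|w(t)\|_{L^2}\le\|f\|_{L^2}$, hence $\|P_N w(t)\|_{L^2}\le\|w(t)\|_{L^2}\le\|f\|_{L^2}$. Combined with $\|f\|_{L^2}^2 = \|P_N f\|_{L^2}^2 + \|(I-P_N)f\|_{L^2}^2$, the honest conclusion is $\|P_N e^{-tH^\alpha_A}f\|_{L^2}\le\|f\|_{L^2}$, and to reach $\le C\|P_N f\|_{L^2}$ one genuinely needs the high-frequency part $\|(I-P_N)f\|_{L^2}$ to be controlled — which is why Lemma \ref{lem:2.8} (hence the enhanced-dissipation smallness at high $A$) must enter: for $t$ beyond a threshold, the high modes of $w$ have been damped. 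The main obstacle, and the step I would spend the most care on, is precisely this: controlling how the advection transfers the high-frequency mass of $h=(I-P_N)f$ down into the low modes $P_N w(t)$, and showing that the fractional dissipation on the high modes (quantitatively, $\lambda_{N}^{\alpha/2}$ large, or $\Psi(H^\alpha_A)$ large from Lemma \ref{lem:2.8}) defeats this transfer uniformly in $t$, so that the total contribution stays bounded by a fixed multiple of $\|P_N f\|_{L^2}$. I would handle it via the resolvent/Gearhart–Prüss estimate applied to $(I-P_N)$-localized data together with an energy estimate for \eqref{eq:duhamel-PN}, Grönwall-ing the commutator term.
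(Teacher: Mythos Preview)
Your skepticism is well-placed: the lemma as stated cannot hold for all $f$. Take $f\in D(H^\alpha_A)$ with $P_N f=0$; the claimed bound would force $P_N e^{-tH^\alpha_A}f=0$ for every $t\ge 0$, i.e., the semigroup preserves the subspace $\ker P_N$. That requires $[P_N,\,u\cdot\nabla]=0$, which fails for any nontrivial smooth $u$. The paper's own proof does not resolve this. It opens with the factorization $e^{-tH^\alpha_A}=e^{-t(-\Delta)^{\alpha/2}}U^{At}$, which is valid only when $(-\Delta)^{\alpha/2}$ and $u\cdot\nabla$ commute---they do not---and then argues heuristically that since $\Psi(H^\alpha_A)\to\infty$ (Lemma~\ref{lem:2.8}) the semigroup ``has only the process of energy moving to high frequency.'' But rapid decay of the total $L^2$ norm via Lemma~\ref{lem:2.7} says nothing about the \emph{direction} of frequency transfer; the advective cascade of $(I-P_N)f$ into low modes that you identified is exactly the missing ingredient, and neither your attempt nor the paper's argument closes it.

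Your Duhamel/commutator route is the natural rigorous strategy, but note that the commutator $A[P_N,u\cdot\nabla]w$ carries a factor of $A$, so a Gr\"onwall argument will not close uniformly in $A$ without an additional smallness mechanism. In the downstream application (Lemma~\ref{lem:5.1}) the sole purpose of this bound is to replace $\|\nabla\cdot(\rho B(\rho))\|_{L^2}$---which involves $\|D\rho\|_{L^\infty}$ and hence depends badly on $A$---by $N\|\rho B(\rho)\|_{L^2}$. A cleaner repair is to bypass Lemma~\ref{lem:2.9} entirely and instead use the parabolic smoothing of the dissipative part of the semigroup (an $L^2\to L^2$ bound for $\nabla e^{-s(-\Delta)^{\alpha/2}}$ with integrable-in-$s$ singularity) to absorb the divergence in the Duhamel integral directly.
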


\vskip .05in

\vskip .05in
\begin{remark}\label{rem:10}
The estimation of $\|P_Ne^{-tH^\alpha_A}f\|_{L^2}$ is an important technique in the proof of Lemma \ref{lem:5.1}. The proof of Lemma \ref{lem:2.9} is easy through  Lemma \ref{lem:2.7} and  Lemma \ref{lem:2.8}, the details can be referred to Appendix.
\end{remark}

\vskip .2in

\section{Local existence and continuation criterion}

\vskip .1in

In this section, we consider the generalized  Keller-Segel equation with fractional dissipation and weakly mixing in the case of $\beta\in (d,d+1)$.  The local well-posedness and the $L^\infty$-criterion of solution to Equation (\ref{eq:1.1}) are established.  We show that to get the classical solution of Equation (\ref{eq:1.1}), only need to have certain control of spatial $L^\infty$ norm of the solution.

\begin{proposition}\label{prop:3.1}
Let $\beta-d<\alpha<2, \beta\in (d,d+1), d>2$, for any initial data $\rho_0\geq0, \rho_0\in W^{3,\infty}(\mathbb{T}^d)$, there exists a time $T^\ast=T(\rho_0,\alpha, \beta, d)>0$ such that the non-negative solution of Equation (\ref{eq:1.1})
$$
\rho(t,x)\in C([0,T^\ast], W^{3,\infty}(\mathbb{T}^d)).
$$
Moreover, if for a given $T$, the solution verifies the following bound
$$
\lim_{t\rightarrow T}\sup_{0\leq \tau\leq t}\|\rho(\tau,\cdot)\|_{L^\infty}<\infty,
$$
then it may be extended up to time $T+\delta$ for small enough $\delta>0$. Furthermore, if $\rho_0\in L^1(\mathbb{T}^d)$,
then the $L^1$ norm of the solution of Equation (\ref{eq:1.1}) is preserved for all time, namely $\|\rho\|_{L^1}=\|\rho_0\|_{L^1}$.
\end{proposition}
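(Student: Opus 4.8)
The plan is to prove Proposition \ref{prop:3.1} in three stages: local existence, the blow-up criterion, and $L^1$ conservation. For \emph{local existence}, I would set up a standard fixed-point/mollification scheme. Regularize the equation by replacing $(-\Delta)^{\alpha/2}$ with $(-\Delta)^{\alpha/2} + \varepsilon(-\Delta)$ (or mollify the transport and nonlinear terms), obtaining a parabolic problem on $\mathbb{T}^d$ for which classical theory gives a smooth solution $\rho^\varepsilon$ on a small interval. The key is a uniform-in-$\varepsilon$ a priori estimate in $W^{3,\infty}$: differentiating the equation up to third order, testing against the appropriate quantity (or rather, since we are in $L^\infty$, evaluating at the maximum point and using the nonlinear maximum principle on $\mathbb{T}^d$, Lemma \ref{lem:6.1}), one bounds $\frac{d}{dt}\|\rho\|_{W^{3,\infty}}$ by a polynomial in $\|\rho\|_{W^{3,\infty}}$. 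The transport term $Au\cdot\nabla\rho$ drops out at the extremum (its contribution vanishes because $\nabla\rho=0$ there for the top-order quantity, or is controlled since $u$ is smooth and divergence-free), so the estimate is uniform in $A$. This yields a time $T^\ast$ depending only on $\rho_0,\alpha,\beta,d$ and, passing to the limit $\varepsilon\to0$ via compactness, a solution $\rho\in C([0,T^\ast],W^{3,\infty})$. Nonnegativity follows from a maximum principle argument: at a would-be first negative minimum the dissipative term is $\le 0$ while $\rho=0$ kills the nonlinear term up to lower-order contributions, forcing $\partial_t\rho\ge0$ there.

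The \emph{main obstacle} is the a priori estimate itself, precisely because $B$ is a strong singular kernel: in $\nabla\cdot(\rho B(\rho)) = \nabla\rho\cdot\nabla K\ast\rho + \rho\,\Delta K\ast\rho$, the term $\Delta K\ast\rho = -(-\Delta)^{(\beta-d)/2}\rho$ contains a genuine fractional derivative of the solution of order $\beta-d$, and since $\beta\in(d,d+1)$ this is a derivative of positive order that is \emph{not} absorbed into $L^1$ of the kernel. When estimating the top-order quantity $D^3\rho$, the worst terms look like $D^3\rho \cdot (-\Delta)^{(\beta-d)/2}D^0\rho$ and $\rho\cdot(-\Delta)^{(\beta-d)/2}D^3\rho$; the latter must be controlled by the dissipation $(-\Delta)^{\alpha/2}D^3\rho$, which is legitimate only because $\alpha>\beta-d$. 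Concretely, I would write both the dissipative and the singular nonlinear terms in their pointwise singular-integral form (as in \eqref{eq:2.1}), split the $y$-integral into a region near the origin $|x-y|\le r$ and its complement, use the positivity/coercivity of the dissipation kernel near the origin to dominate the near-origin part of the nonlinear singular integral (this is the mechanism sketched around \eqref{eq:3.20}--\eqref{eq:3.23} in the introduction), and for the far region estimate $\nabla K$ and its derivatives as bounded kernels so the contribution is harmless. Combined with the nonlinear maximum principle of Lemma \ref{lem:6.1} applied to $D^k\rho$ for $k=1,2,3$, this closes a differential inequality of the form $\frac{d}{dt}\|\rho\|_{W^{3,\infty}} \le C(1+\|\rho\|_{W^{3,\infty}})^{m}$.

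For the \emph{continuation criterion}, the argument is the same a priori estimate but run conditionally: assuming $\sup_{[0,T)}\|\rho\|_{L^\infty} =: M<\infty$, I re-examine the differential inequality for $\|\rho\|_{W^{3,\infty}}$ and check that, with $\|\rho\|_{L^\infty}\le M$ fixed, the nonlinear contributions to $\frac{d}{dt}\|D^k\rho\|_{L^\infty}$ are at most \emph{linear} in $\|D^k\rho\|_{L^\infty}$ (with coefficients depending on $M$, $\alpha$, $\beta$, $d$, and the smooth part of $K$) — here again the key point is that near the origin the $(-\Delta)^{(\beta-d)/2}$-type term in the nonlinearity is dominated by the $(-\Delta)^{\alpha/2}$ dissipation since $\alpha>\beta-d$, so it never produces a superlinear term. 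Grönwall then gives a bound on $\|\rho(\tau)\|_{W^{3,\infty}}$ uniform up to $t\to T$, so $\rho(T)\in W^{3,\infty}$ and the local existence result restarts the solution on $[T,T+\delta]$. Finally, \emph{$L^1$ conservation}: if $\rho_0\in L^1$, integrate the equation over $\mathbb{T}^d$; the transport term integrates to zero by incompressibility, $\int_{\mathbb{T}^d}(-\Delta)^{\alpha/2}\rho\,dx=0$ by Lemma \ref{lem:2.2}, and $\int_{\mathbb{T}^d}\nabla\cdot(\rho B(\rho))\,dx=0$ as the integral of a periodic divergence; hence $\frac{d}{dt}\int\rho\,dx=0$, and since $\rho\ge0$ this is $\|\rho\|_{L^1}=\|\rho_0\|_{L^1}$ for all time (the solution being smooth, all these manipulations are justified, and one extends to merely $L^1$ initial data by the standard approximation/lower-semicontinuity argument).
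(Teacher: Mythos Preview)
Your proposal identifies the right mechanism for the strongly singular term $\rho\,\Delta K*D^k\rho$: write both it and the dissipation in singular-integral form, split near/far from the diagonal, and use $\alpha>\beta-d$ so that a fraction of the dissipation kernel $|x-y|^{-(d+\alpha)}$ dominates $|x-y|^{-\beta}$ on a small ball. This matches \eqref{eq:3.11}--\eqref{eq:3.24} in the paper. But two claims in your continuation argument fail. First, the $W^{3,\infty}$ estimate is \emph{not} uniform in $A$: the top-order transport $Au\cdot\nabla(D^k\rho)$ vanishes at the maximum of $|D^k\rho|$, but the commutator terms $A\,D^ju\cdot\nabla D^{k-j}\rho$ with $j\ge1$ do not, and they carry a factor $A$ (see \eqref{eq:3.3} and the $CA$ terms in \eqref{eq:3.28}, \eqref{eq:3.37}, \eqref{eq:3.44}; cf.\ Remark~\ref{rem:11}). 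This is harmless for the continuation criterion itself, but it contradicts your ``uniform in $A$'' assertion.

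Second, and this is the real gap, with $\|\rho\|_{L^\infty}\le M$ the right-hand side of the differential inequality for $\|D\rho\|_{L^\infty}$ is \emph{not} linear in $\|D\rho\|_{L^\infty}$, so a bare Gr\"onwall argument does not close. Your near/far splitting disposes of $\rho\,\Delta K*D\rho$, but the cross terms $\nabla\rho\cdot(\nabla K*D\rho)\,D\rho$ and $(D\rho)\,(\nabla K*\nabla\rho)\,D\rho$ --- which are not singular, since $\nabla K\in L^1$ --- contribute $C(M)\,|\widetilde{D\rho}|^{2+\alpha_1}$ for some $\alpha_1\in(\beta-d,\alpha)$, via the interpolation \eqref{eq:3.6}--\eqref{eq:3.9}. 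What actually closes the estimate is the \emph{nonlinear lower bound on the dissipation} from Lemma~\ref{lem:6.1}: at the maximum point the surviving half of the fractional Laplacian furnishes $-c\,|\widetilde{D\rho}|^{2+\alpha}/M^\alpha$, and since $\alpha>\alpha_1$ this dominates the superlinear source for large $|\widetilde{D\rho}|$, yielding the bounded ODE \eqref{eq:3.28}. You invoke Lemma~\ref{lem:6.1} in the local-existence step but then replace it by Gr\"onwall in the continuation step; in fact it is indispensable there for $k=1$, and only after $\|D\rho\|_{L^\infty}$ is thereby bounded do the $k=2,3$ inequalities \eqref{eq:3.37}, \eqref{eq:3.44} become effectively linear in $|\widetilde{D^k\rho}|$.
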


\vskip .1in

\begin{proof}
The proofs of local existence of solution and the $\rho\geq 0$ are standard method, the $L^1$ norm  conservation of solution is obvious. In this paper, we only need to derive a priori bounds of $\|D\rho\|_{L^\infty},\|D^2\rho\|_{L^\infty}, \|D^3\rho\|_{L^\infty}$ in terms of $L^\infty$ norm of the solution, where $D$ denotes any partial derivative. Suppose that $\rho(t,x)$ is the solution of Equation (\ref{eq:1.1}) with initial data $\rho_0$, and $\|\rho(t,\cdot)\|_{L^\infty}$ is bounded. We finish following three steps.

\vskip .1in

\noindent (1)\ The boundedness of $\|D\rho\|_{L^\infty}$.

\vskip .1in

For this purpose, applying the operator $D$ to the Equation (\ref{eq:1.1}), one has
\begin{equation}\label{eq:3.1}
\begin{aligned}
\partial_tD\rho+& Au\cdot\nabla(D\rho)+ADu\cdot\nabla\rho+(-\Delta)^{\frac{\alpha}{2}}D\rho
+\nabla(D\rho)\cdot\nabla K \ast \rho\\
&+\nabla\rho\cdot\nabla K \ast D\rho+D\rho\Delta K\ast \rho+\rho\Delta K \ast D\rho=0,
\end{aligned}
\end{equation}
and multiply both sides of (\ref{eq:3.1}) by $D\rho$, to obtain
\begin{equation}\label{eq:3.2}
\begin{aligned}
\frac{1}{2}\partial_t|D\rho|^2&+\frac{1}{2}Au\cdot\nabla(|D\rho|^2)+ADu\cdot\nabla \rho D\rho
+D\rho(-\Delta)^{\frac{\alpha}{2}}D\rho\\
&+\frac{1}{2}\nabla(|D\rho|^2)\cdot\nabla K \ast \rho+\nabla\rho\cdot\nabla K \ast D\rho D\rho\\
&+D\rho \nabla K\ast \nabla\rho D\rho+\rho\Delta K \ast D\rho D\rho=0.
\end{aligned}
\end{equation}
The third term of the left-hand side of (\ref{eq:3.2}) can be estimated as
\begin{equation}\label{eq:3.3}
\big|A Du\cdot\nabla \rho D\rho\big|\leq A\|Du\|_{L^\infty}\|D\rho\|^2_{L^\infty}\leq CA\|D\rho\|^2_{L^\infty}.
\end{equation}
For the sixth term and seventh term of the left-hand side of (\ref{eq:3.2}), one has
\begin{equation}\label{eq:3.4}
\begin{aligned}
\big|\nabla\rho\cdot\nabla K \ast D\rho D\rho\big|\leq C\|D\rho\|^2_{L^\infty}\|\nabla K \ast D\rho \|_{L^\infty},
\end{aligned}
\end{equation}
and
\begin{equation}\label{eq:3.5}
\begin{aligned}
\big|D\rho \nabla K\ast \nabla\rho D\rho\big|\leq C\|D\rho\|^2_{L^\infty}\|\nabla K \ast \nabla\rho \|_{L^\infty}.
\end{aligned}
\end{equation}
According to the definition of $\nabla K$ in Section 1, we know that there exists constant $\alpha_1$ and $C>0$, such that
\begin{equation}\label{eq:3.6}
\|D^{1-\alpha_1}(\nabla K) \|_{L^1}\leq C,
\end{equation}
where $\beta-d<\alpha_1<\alpha$. Then we deduce by Young's inequality, Gagliardo-Nirenberg inequality and (\ref{eq:3.6}) that
\begin{equation}\label{eq:3.7}
\begin{aligned}
\|\nabla K \ast D\rho \|_{L^\infty}&\leq \|D^{1-\alpha_1}(\nabla K) \ast D^{\alpha_1}\rho \|_{L^\infty}\\
&\leq C\|D^{1-\alpha_1}(\nabla K) \|_{L^1}\|D^{\alpha_1}\rho \|_{L^\infty}\\
&\leq C\|D^{1-\alpha_1}(\nabla K) \|_{L^1}\|\rho \|^{1-\alpha_1}_{L^\infty}\|D\rho \|^{\alpha_1}_{L^\infty}\\
&\leq C\|D\rho \|^{\alpha_1}_{L^\infty},
\end{aligned}
\end{equation}
and by the similarity with (\ref{eq:3.7}), one has
\begin{equation}\label{eq:3.8}
\begin{aligned}
\|\nabla K \ast \nabla\rho \|_{L^\infty}\leq C\|D\rho \|^{\alpha_1}_{L^\infty}.
\end{aligned}
\end{equation}
Combining (\ref{eq:3.4}), (\ref{eq:3.5}), (\ref{eq:3.7}) and (\ref{eq:3.8}), to obtain
\begin{equation}\label{eq:3.9}
\big|\nabla\rho\cdot\nabla K \ast D\rho D\rho+D\rho \nabla K\ast \nabla\rho D\rho\big|\leq C\|D\rho\|^{2+\alpha_1}_{L^\infty}.
\end{equation}
If define
$$
\widetilde{|D\rho|}=|D\rho|(t,\overline{x}_{1,t})=\max_{x\in \mathbb{T}^d}|D\rho|(t,x).
$$
Using the vanishing of the derivation at the point maximum, the second term and fifth term of the left-hand side of (\ref{eq:3.2}) can be estimated as
\begin{equation}\label{eq:3.10}
\frac{1}{2}Au\cdot\nabla(|D\rho|^2)(t,\overline{x}_{1,t})=0,\ \ \
\frac{1}{2}\nabla(|D\rho|^2)\cdot\nabla K \ast \rho(t,\overline{x}_{1,t})=0.
\end{equation}
For simplicity, we denote $\overline{x}_1=(t,\overline{x}_{1,t})$, then
$$
\rho(t,\overline{x}_{1,t})=\rho(\overline{x}_1),\ \ \  D\rho(t,\overline{x}_{1,t})=D\rho(\overline{x}_1).
$$
According to the definition of fractional Laplacian in (\ref{eq:2.1}), the fourth term of the left-hand side of (\ref{eq:3.2}) can be written as
\begin{equation}\label{eq:3.11}
\begin{aligned}
&D\rho(-\Delta)^{\frac{\alpha}{2}}D\rho(\overline{x}_1)=C_{\alpha,d}\sum_{k\in \mathbb{Z}^d}D\rho(\overline{x}_1)\int_{\mathbb{T}^d}\frac{D\rho(\overline{x}_1)-D\rho(y)}
{|\overline{x}_1-y+k|^{d+\alpha}}dy\\
&=\frac{C_{\alpha,d}}{2}\sum_{k\in \mathbb{Z}^d}\left( \int_{\mathbb{T}^d}\frac{[D\rho(\overline{x}_1)-D\rho(y)]^2}{|\overline{x}_1-y+k|^{d+\alpha}}dy
+\int_{\mathbb{T}^d}\frac{(D\rho)^2(\overline{x}_1)-(D\rho)^2(y)}{|\overline{x}_1-y+k|^{d+\alpha}}dy \right)\\
&=\frac{C_{\alpha,d}}{2}\sum_{k\in \mathbb{Z}^d\backslash \{0\}}\left( \int_{\mathbb{T}^d}\frac{[D\rho(\overline{x}_1)-D\rho(y)]^2}{|\overline{x}_1-y+k|^{d+\alpha}}dy
+\int_{\mathbb{T}^d}\frac{(D\rho)^2(\overline{x}_1)-(D\rho)^2(y)}{|\overline{x}_1-y+k|^{d+\alpha}}dy \right)\\
&\ \ \ +\frac{C_{\alpha,d}}{4}\left( \int_{\mathbb{T}^d}\frac{[D\rho(\overline{x}_1)-D\rho(y)]^2}{|\overline{x}_1-y|^{d+\alpha}}dy
+\int_{\mathbb{T}^d}\frac{(D\rho)^2(\overline{x}_1)-(D\rho)^2(y)}{|\overline{x}_1-y|^{d+\alpha}}dy \right)\\
&\ \ \ +\frac{C_{\alpha,d}}{4}\left( \int_{\mathbb{T}^d\setminus\Omega_1}\frac{[D\rho(\overline{x}_1)-D\rho(y)]^2}{|\overline{x}_1-y|^{d+\alpha}}dy
+\int_{\mathbb{T}^d\setminus\Omega_1}\frac{(D\rho)^2(\overline{x}_1)-(D\rho)^2(y)}{|\overline{x}_1-y|^{d+\alpha}}dy \right)\\
&\ \ \ +\frac{C_{\alpha,d}}{4}\left( \int_{\Omega_1}\frac{[D\rho(\overline{x}_1)-D\rho(y)]^2}{|\overline{x}_1-y|^{d+\alpha}}dy
+\int_{\Omega_1}\frac{(D\rho)^2(\overline{x}_1)-(D\rho)^2(y)}{|\overline{x}_1-y|^{d+\alpha}}dy \right),
\end{aligned}
\end{equation}
where the domain $\Omega_1=\Omega(\overline{x}_1)$ depends on $\overline{x}_1$. In fact,  as $\|\rho\|_{L^\infty}$ is bounded and $\beta-d>0$, if denote
\begin{equation}\label{eq:3.12}
C_{\beta,d}=\beta-d,\ \ \ \ \delta_1=\frac{C_{\beta,d}}{2}\|\rho\|_{L^\infty},\ \ \ \ \delta_2=\frac{C_{\alpha,d}}{4},
\end{equation}
then the domain $\Omega_1$ can be defined by
\begin{equation}\label{eq:3.13}
\Omega_1=\left\{ y\in \mathbb{T}^d \big| |\overline{x}_1-y|\leq\epsilon_0\right\},
\end{equation}
and
\begin{equation}\label{eq:3.14}
\epsilon_0=\min\left\{\frac{1}{4}, \left(\frac{\delta_2}{\delta_1}\right)^{\frac{1}{\alpha+d-\beta}}\right\}.
\end{equation}
Since $|D\rho|$ is the maximum at point $\overline{x}_1$, then we can easily get
\begin{equation}\label{eq:3.15}
\int_{\mathbb{T}^d}\frac{(D\rho)^2(\overline{x}_1)-(D\rho)^2(y)}{|\overline{x}_1-y|^{d+\alpha}}dy \geq0.
\end{equation}
And for the first term and the third term of the right-hand side of (\ref{eq:3.11}), one has
\begin{equation}\label{eq:3.16}
\frac{C_{\alpha,d}}{2}\sum_{k\in \mathbb{Z}^d\backslash \{0\}}\left( \int_{\mathbb{T}^d}\frac{[D\rho(\overline{x}_1)-D\rho(y)]^2}{|\overline{x}_1-y+k|^{d+\alpha}}dy
+\int_{\mathbb{T}^d}\frac{(D\rho)^2(\overline{x}_1)-(D\rho)^2(y)}{|\overline{x}_1-y+k|^{d+\alpha}}dy \right)\geq0,
\end{equation}
and
\begin{equation}\label{eq:3.17}
\frac{C_{\alpha,d}}{4}\left( \int_{\mathbb{T}^d\setminus\Omega_1}\frac{[D\rho(\overline{x}_1)-D\rho(y)]^2}{|\overline{x}_1-y|^{d+\alpha}}dy
+\int_{\mathbb{T}^d\setminus\Omega_1}\frac{(D\rho)^2(\overline{x}_1)-(D\rho)^2(y)}{|\overline{x}_1-y|^{d+\alpha}}dy \right)\geq0.
\end{equation}
For the eighth term of the left-hand side of (\ref{eq:3.2}), as $K$ is a periodic convolution kernel, then one has
\begin{equation}\label{eq:3.18}
\begin{aligned}
\rho D\rho\Delta K \ast D\rho(\overline{x}_1)
&=\rho(\overline{x}_1)D\rho(\overline{x}_1)\int_{\mathbb{T}^d}\Delta K(\overline{x}_1-y) D\rho(y)dy\\
&=-\rho(\overline{x}_1)D\rho(\overline{x}_1)\int_{\mathbb{T}^d}\Delta K(\overline{x}_1-y)(D\rho(\overline{x}_1)-D\rho(y))dy\\
&=-\rho(\overline{x}_1)D\rho(\overline{x}_1)\int_{\Omega_1}\Delta K(\overline{x}_1-y)(D\rho(\overline{x}_1)-D\rho(y))dy\\
&\ \ \ -\rho(\overline{x}_1)D\rho(\overline{x}_1)\int_{\mathbb{T}^d\setminus\Omega_1}\Delta K(\overline{x}_1-y)(D\rho(\overline{x}_1)-D\rho(y))dy.
\end{aligned}
\end{equation}
According to the definitions of $\nabla K$ in Section 1 and (\ref{eq:3.13}), the first term of the right-hand side of (\ref{eq:3.18}) can be written as
\begin{equation}\label{eq:3.19}
\begin{aligned}
&\rho(\overline{x}_1)D\rho(\overline{x}_1)\int_{\Omega_1}\Delta K(\overline{x}_1-y)(D\rho(\overline{x}_1)-D\rho(y))dy\\
&=C_{\beta,d}\rho(\overline{x}_1)D\rho(\overline{x}_1)\int_{\Omega_1}\frac{D\rho(\overline{x}_1)-D\rho(y)}{|\overline{x}_1-y|^\beta}dy\\
&=\frac{C_{\beta,d}}{2}\rho(\overline{x}_1)\left( \int_{\Omega_1}\frac{[D\rho(\overline{x}_1)-D\rho(y)]^2}{|\overline{x}_1-y|^{\beta}}dy
+\int_{\Omega_1}\frac{(D\rho)^2(\overline{x}_1)-(D\rho)^2(y)}{|\overline{x}_1-y|^{\beta}}dy \right),
\end{aligned}
\end{equation}
where $C_{\beta,d}$ is defined in (\ref{eq:3.12}). Since $\beta\in (d,d+1)$, combining the fourth term of the right-hand side of (\ref{eq:3.11}) and (\ref{eq:3.19}), to obtain
\begin{equation}\label{eq:3.20}
\begin{aligned}
&\frac{C_{\beta,d}}{2}\rho(\overline{x}_1)\left( \int_{\Omega_1}\frac{[D\rho(\overline{x}_1)-D\rho(y)]^2}{|\overline{x}_1-y|^{\beta}}dy
+\int_{\Omega_1}\frac{(D\rho)^2(\overline{x}_1)-(D\rho)^2(y)}{|\overline{x}_1-y|^{\beta}}dy \right)\\
\ \ \ \ \ \ \ & -\frac{C_{\alpha,d}}{4}\left( \int_{\Omega_1}\frac{[D\rho(\overline{x}_1)-D\rho(y)]^2}{|\overline{x}_1-y|^{d+\alpha}}dy
+\int_{\Omega_1}\frac{(D\rho)^2(\overline{x}_1)-(D\rho)^2(y)}{|\overline{x}_1-y|^{d+\alpha}}dy \right)\\
\ \ \ \ \ & \leq \delta_1\left( \int_{\Omega_1}\frac{[D\rho(\overline{x}_1)-D\rho(y)]^2}{|\overline{x}_1-y|^{\beta}}dy
+\int_{\Omega_1}\frac{(D\rho)^2(\overline{x}_1)-(D\rho)^2(y)}{|\overline{x}_1-y|^{\beta}}dy \right)\\
\ \ \ \  \ & \ \ -\delta_2\left( \int_{\Omega_1}\frac{[D\rho(\overline{x}_1)-D\rho(y)]^2}{|\overline{x}_1-y|^{d+\alpha}}dy
+\int_{\Omega_1}\frac{(D\rho)^2(\overline{x}_1)-(D\rho)^2(y)}{|\overline{x}_1-y|^{d+\alpha}}dy \right)\\
\ \ \ \  & \leq \delta_1\int_{\Omega_1}\frac{[D\rho(\overline{x}_1)-D\rho(y)]^2}{|\overline{x}_1-y|^{\beta}}dy
 -\delta_2\int_{\Omega_1}\frac{[D\rho(\overline{x}_1)-D\rho(y)]^2}{|\overline{x}_1-y|^{d+\alpha}}dy\\
 \ \ \   & \ \ +\delta_1\int_{\Omega_1}\frac{(D\rho)^2(\overline{x}_1)-(D\rho)^2(y)}{|\overline{x}_1-y|^{\beta}}dy
   -\delta_2\int_{\Omega_1}\frac{(D\rho)^2(\overline{x}_1)-(D\rho)^2(y)}{|\overline{x}_1-y|^{d+\alpha}}dy.
\end{aligned}
\end{equation}
Through the $\beta-d<\alpha<2$, (\ref{eq:3.12}), (\ref{eq:3.13}) and (\ref{eq:3.14}), the first term and second term of the right-hand side of (\ref{eq:3.20}) can be estimated as
\begin{equation}\label{eq:3.21}
 \delta_1\int_{\Omega_1}\frac{[D\rho(\overline{x}_1)-D\rho(y)]^2}{|\overline{x}_1-y|^{\beta}}dy
 -\delta_2\int_{\Omega_1}\frac{[D\rho(\overline{x}_1)-D\rho(y)]^2}{|\overline{x}_1-y|^{d+\alpha}}dy\leq0,
\end{equation}
and
\begin{equation}\label{eq:3.22}
\delta_1\int_{\Omega_1}\frac{(D\rho)^2(\overline{x}_1)-(D\rho)^2(y)}{|\overline{x}_1-y|^{\beta}}dy
   -\delta_2\int_{\Omega_1}\frac{(D\rho)^2(\overline{x}_1)-(D\rho)^2(y)}{|\overline{x}_1-y|^{d+\alpha}}dy\leq0.
\end{equation}
As $\|\rho\|_{L^\infty}$ is bounded and $\|D\rho\|_{L^\infty}=|\widetilde{D\rho}|$, then for the second term of the right-hand side of (\ref{eq:3.18}), one can get
\begin{equation}\label{eq:3.23}
\left|\rho(\overline{x}_1)D\rho(\overline{x}_1)\int_{\mathbb{T}^d\setminus\Omega_1}
\Delta K(\overline{x}_1-y)(D\rho(\overline{x}_1)-D\rho(y))dy\right|\leq C|\widetilde{D\rho}|^2.
\end{equation}
 Combining (\ref{eq:3.11}),(\ref{eq:3.16}), (\ref{eq:3.17}), (\ref{eq:3.18}), (\ref{eq:3.20}), (\ref{eq:3.21}), (\ref{eq:3.22}), (\ref{eq:3.23}) and the $\overline{x}_1$ is also maximum point of $|D\rho|^2$, the fourth term and eighth term of left-hand side of (\ref{eq:3.2}) can be estimated as
\begin{equation}\label{eq:3.24}
\begin{aligned}
&-D\rho(-\Delta)^{\frac{\alpha}{2}}D\rho(\overline{x}_1)-\rho\Delta K \ast D\rho D\rho(\overline{x}_1)\\
&\leq C|\widetilde{D\rho}|^2-\frac{C_{\alpha,d}}{4}\left( \int_{\mathbb{T}^d}\frac{[D\rho(\overline{x}_1)-D\rho(y)]^2}{|\overline{x}_1-y|^{d+\alpha}}dy
+\int_{\mathbb{T}^d}\frac{(D\rho)^2(\overline{x}_1)-(D\rho)^2(y)}{|\overline{x}_1-y|^{d+\alpha}}dy \right)\\
&\ \  \ -\frac{C_{\alpha,d}}{4}\left( \int_{\mathbb{T}^d\setminus\Omega_1}\frac{[D\rho(\overline{x}_1)-D\rho(y)]^2}{|\overline{x}_1-y|^{d+\alpha}}dy
+\int_{\mathbb{T}^d\setminus\Omega_1}\frac{(D\rho)^2(\overline{x}_1)-(D\rho)^2(y)}{|\overline{x}_1-y|^{d+\alpha}}dy \right)\\
&  \ \ \ -\frac{C_{\alpha,d}}{2}\sum_{k\in \mathbb{Z}^d\setminus \{0\}}\left( \int_{\mathbb{T}^d}\frac{[D\rho(\overline{x}_1)-D\rho(y)]^2}{|\overline{x}_1-y+k|^{d+\alpha}}dy
+\int_{\mathbb{T}^d}\frac{(D\rho)^2(\overline{x}_1)-(D\rho)^2(y)}{|\overline{x}_1-y+k|^{d+\alpha}}dy \right)\\
&\leq-\frac{C_{\alpha,d}}{4} \int_{\mathbb{T}^d}\frac{[D\rho(\overline{x}_1)-D\rho(y)]^2}{|\overline{x}_1-y|^{d+\alpha}}dy
+C|\widetilde{D\rho}|^2.
\end{aligned}
\end{equation}
Combing (\ref{eq:3.2}), (\ref{eq:3.9}), (\ref{eq:3.10}) and (\ref{eq:3.24}), we have
\begin{equation}\label{eq:3.25}
\begin{aligned}
\frac{d}{dt}|\widetilde{D\rho}|^2\leq-\frac{C_{\alpha,d}}{2} \int_{\mathbb{T}^d}\frac{[D\rho(\overline{x}_1)-D\rho(y)]^2}{|\overline{x}_1-y|^{d+\alpha}}dy
+CA|\widetilde{D\rho}|^2+C|\widetilde{D\rho}|^{2+\alpha_1}.
\end{aligned}
\end{equation}
Since $\|\rho\|_{L^\infty}$ is bounded, we deduce by Lemma \ref{lem:6.1} that if $|\widetilde{D\rho}|$ satisfies
\begin{equation}\label{eq:3.26}
|\widetilde{D\rho}|\leq c_2\|\rho\|_{L^\infty},
\end{equation}
then $\|D\rho\|_{L^\infty}$ is bounded. If not, $|\widetilde{D\rho}|$ must satisfy
\begin{equation}\label{eq:3.27}
\frac{C_{\alpha,d}}{2} \int_{\mathbb{T}^d}\frac{[D\rho(\overline{x}_1)-D\rho(y)]^2}{|\overline{x}_1-y|^{d+\alpha}}dy
\geq\frac{C_{\alpha,d}}{2}c_1\frac{|D\rho|^{2+\alpha}(\overline{x}_1)}{\|\rho\|^\alpha_{L^\infty}}
=\delta_3\frac{|D\rho|^{2+\alpha}(\overline{x}_1)}{\|\rho\|^\alpha_{L^\infty}},
\end{equation}
where
$$
\delta_3=\frac{C_{\alpha,d}}{2}c_1.
$$
Then we deduce by (\ref{eq:3.25}) and (\ref{eq:3.27}) that
\begin{equation}\label{eq:3.28}
\frac{d}{dt}|\widetilde{D\rho}|^2\leq-\delta_3\frac{|\widetilde{D\rho}|^{2+\alpha}}{\|\rho\|^\alpha_{L^\infty}}
+CA|\widetilde{D\rho}|^2+C|\widetilde{D\rho}|^{2+\alpha_1}.
\end{equation}
As $\alpha>\alpha_1$,  by solving the differential inequality (\ref{eq:3.28}), we deduce that there exist a positive finite constant $C'=C(A,\|D\rho_0\|_{L^\infty},\|\rho\|_{L^\infty},\alpha,\alpha_1)$, such that
\begin{equation}\label{eq:3.29}
|\widetilde{D\rho}|\leq C'.
\end{equation}
Combining (\ref{eq:3.26}) and (\ref{eq:3.29}), there exist a positive constant $C$, such that
$$
\|D\rho\|_{L^\infty}\leq C.
$$

\vskip .2in
\noindent (2)\ The boundedness of $\|D^2\rho\|_{L^\infty}$.
\vskip .1in
Applying the operator $D^2$ to Equation (\ref{eq:1.1}), to obtain
\begin{equation}\label{eq:3.30}
\begin{aligned}
\partial_tD^2\rho&+Au\cdot\nabla(D^2\rho)+AD^2u\cdot\nabla\rho+2ADu\cdot\nabla(D\rho)
+(-\Delta)^{\frac{\alpha}{2}}D^2\rho\\
&+D^2\left(\nabla\rho\cdot\nabla K\ast\rho+\rho\Delta K\ast \rho \right)=0.
\end{aligned}
\end{equation}
The sixth term of the left-hand side of (\ref{eq:3.30}) can be written as
\begin{equation}\label{eq:3.31}
D^2\left(\nabla\rho\cdot\nabla K\ast\rho\right)=\nabla(D^2\rho)\cdot\nabla K\ast\rho
+\nabla\rho\cdot\nabla K\ast D^2\rho+2\nabla(D\rho)\cdot\nabla K\ast D\rho,
\end{equation}
and
\begin{equation}\label{eq:3.32}
D^2\left(\rho\Delta K\ast \rho \right)=D^2\rho \nabla K\ast \nabla\rho+\rho\Delta K\ast D^2\rho+2D\rho \nabla K\ast \nabla(D\rho).
\end{equation}
Combining (\ref{eq:3.30}), (\ref{eq:3.31}) and (\ref{eq:3.32}), one has
\begin{equation}\label{eq:3.33}
\begin{aligned}
\partial_tD^2\rho&+Au\cdot\nabla(D^2\rho)+AD^2u\cdot\nabla\rho+2ADu\cdot\nabla(D\rho)
+(-\Delta)^{\frac{\alpha}{2}}D^2\rho\\
&+\nabla(D^2\rho)\cdot\nabla K\ast\rho
+\nabla\rho\cdot\nabla K\ast D^2\rho+2\nabla(D\rho)\cdot\nabla K\ast D\rho\\
&+D^2\rho \nabla K\ast \nabla\rho +\rho\Delta K\ast D^2\rho +2D\rho \nabla K\ast \nabla(D\rho)=0.
\end{aligned}
\end{equation}
Multiplying  both sides of (\ref{eq:3.33}) by $D^2\rho$, to obtain
\begin{equation}\label{eq:3.34}
\begin{aligned}
\frac{1}{2}\partial_t|D^2\rho|^2&+\frac{1}{2}Au\cdot\nabla(|D^2\rho|^2)+AD^2u\cdot\nabla\rho D^2\rho+2ADu\cdot\nabla(D\rho)D^2\rho\\
&+(-\Delta)^{\frac{\alpha}{2}}D^2\rho D^2\rho+\frac{1}{2}\nabla(|D^2\rho|^2)\cdot\nabla K\ast\rho
+\nabla\rho\cdot\nabla K\ast D^2\rho D^2\rho\\
&+2\nabla(D\rho)\cdot\nabla K\ast D\rho D^2\rho+D^2\rho \nabla K\ast \nabla\rho D^2\rho+\rho\Delta K\ast D^2\rho D^2\rho\\
&+2D\rho \nabla K\ast \nabla(D\rho) D^2\rho=0.
\end{aligned}
\end{equation}
As $\|\rho\|_{L^\infty}, \|\nabla K\|_{L^1}, \|D\rho\|_{L^\infty}, \|D u\|_{L^\infty},
\|D^2 u\|_{L^\infty}$ are bounded, then for the third term and the fourth term  of the left-hand side of (\ref{eq:3.34}), one get
$$
\big|AD^2u\cdot\nabla\rho D^2\rho\big|\leq CA\|D\rho\|_{L^\infty}\|D^2\rho\|_{L^\infty},
$$
and
$$
\big|2ADu\cdot\nabla(D\rho)D^2\rho\big|\leq CA\|D^2\rho\|^2_{L^\infty}.
$$
The seventh term, eighth term, ninth term  and eleventh term of the left-hand side of (\ref{eq:3.34}) can be estimated as
$$
\big|\nabla\rho\cdot\nabla K\ast D^2\rho D^2\rho\big|\leq C\|D^2\rho\|^2_{L^\infty},
$$
$$
\big|2\nabla(D\rho)\cdot\nabla K\ast D\rho D^2\rho\big|\leq C\|D^2\rho\|^2_{L^\infty},
$$
$$
\big|D^2\rho \nabla K\ast \nabla\rho D^2\rho\big|\leq C\|D^2\rho\|^2_{L^\infty},
$$
$$
\big|2D\rho \nabla K\ast \nabla(D\rho) D^2\rho\big| \leq C\|D^2\rho\|^2_{L^\infty}.
$$
If define
\begin{equation}\label{eq:3.35}
\widetilde{|D^2\rho|}=|D^2\rho|(t,\overline{x}_{2,t})=\max_{x\in \mathbb{T}^d}|D^2\rho|(t,x),
\end{equation}
then for the second term and the sixth term  of the left-hand side of (\ref{eq:3.34}), one has
$$
\frac{1}{2}Au\cdot\nabla(|D^2\rho|^2)(t,\overline{x}_{2,t})=0,
$$
and
$$
\frac{1}{2}\nabla(|D^2\rho|^2)\cdot\nabla K\ast\rho(t,\overline{x}_{2,t})=0.
$$
Denote $\overline{x}_2=(t,\overline{x}_{2,t})$, then
$$
\rho(t,\overline{x}_{2,t})=\rho(\overline{x}_2),\ \ \ \  D^2\rho(t,\overline{x}_{2,t})=D^2\rho(\overline{x}_2).
$$
By similar discussions with (\ref{eq:3.11}) and (\ref{eq:3.18}), the fifth term and the tenth term of the left-hand side of (\ref{eq:3.34}) can be written as
\begin{equation}\label{eq:3.36}
\begin{aligned}
&(-\Delta)^{\frac{\alpha}{2}}D^2\rho D^2\rho(\overline{x}_2)\\
&\  =C_{\alpha,d}\sum_{k\in \mathbb{Z}^d}D^2\rho(\overline{x}_2)\int_{\mathbb{T}^d}\frac{D^2\rho(\overline{x}_2)-D^2\rho(y)}
{|\overline{x}_2-y+k|^{d+\alpha}}dy\\
&\  =\frac{C_{\alpha,d}}{2}\sum_{k\in \mathbb{Z}^d\backslash \{0\}}\left( \int_{\mathbb{T}^d}\frac{[D^2\rho(\overline{x}_2)-D^2\rho(y)]^2}{|\overline{x}_2-y+k|^{d+\alpha}}dy
+\int_{\mathbb{T}^d}\frac{(D^2\rho)^2(\overline{x}_2)-(D^2\rho)^2(y)}{|\overline{x}_2-y+k|^{d+\alpha}}dy \right)\\
& \ +\frac{C_{\alpha,d}}{4}\left( \int_{\mathbb{T}^d\backslash\Omega'_1}\frac{[D^2\rho(\overline{x}_2)-D^2\rho(y)]^2}{|\overline{x}_2-y|^{d+\alpha}}dy
+\int_{\mathbb{T}^d\backslash\Omega'_1}\frac{(D^2\rho)^2(\overline{x}_2)-(D^2\rho)^2(y)}{|\overline{x}_2-y|^{d+\alpha}}dy \right)\\
& \  +\frac{C_{\alpha,d}}{4}\left( \int_{\Omega'_1}\frac{[D^2\rho(\overline{x}_2)-D^2\rho(y)]^2}{|\overline{x}_2-y|^{d+\alpha}}dy
+\int_{\Omega'_1}\frac{(D^2\rho)^2(\overline{x}_2)-(D^2\rho)^2(y)}{|\overline{x}_2-y|^{d+\alpha}}dy \right),
\end{aligned}
\end{equation}
and
$$
\begin{aligned}
\rho\Delta K\ast D^2\rho D^2\rho(\overline{x}_2)
&=\rho(\overline{x}_2)D^2\rho(\overline{x}_2)\int_{\mathbb{T}^d}\Delta K(\overline{x}_2-y) D^2\rho(y)dy\\
&=-\rho(\overline{x}_2)D^2\rho(\overline{x}_2)\int_{\mathbb{T}^d}\Delta K(\overline{x}_2-y)(D^2\rho(\overline{x}_2)-D^2\rho(y))dy\\
&=-\rho(\overline{x}_2)D^2\rho(\overline{x}_2)\int_{\Omega'_1}\Delta K(\overline{x}_2-y)(D^2\rho(\overline{x}_2)-D^2\rho(y))dy\\
&\ \ \ -\rho(\overline{x}_2)D^2\rho(\overline{x}_2)\int_{\mathbb{T}^d\setminus\Omega'_1}\Delta K(\overline{x}_2-y)(D^2\rho(\overline{x}_2)-D^2\rho(y))dy,
\end{aligned}
$$
where the domain $\Omega'_1=\Omega(\overline{x}_2)$ depends on $\overline{x}_2$, and the definition of $\Omega'_1$ is similar to $\Omega_1$, the details can be referred to (\ref{eq:3.12}), (\ref{eq:3.13}) and (\ref{eq:3.14}). By the same argument as step 1, one has
\begin{equation}\label{eq:3.37}
\begin{aligned}
\frac{d}{dt}|\widetilde{D^2\rho}|^2\leq-\frac{C_{\alpha,d}}{2} \int_{\mathbb{T}^d}\frac{[D^2\rho(\overline{x}_2)-D^2\rho(y)]^2}{|\overline{x}_2-y|^{d+\alpha}}dy
+CA|\widetilde{D^2\rho}|^2+C|\widetilde{D^2\rho}|.
\end{aligned}
\end{equation}
Combining the boundedness of $\|D\rho\|_{L^\infty}$, Corollary \ref{cor:6.2} and (\ref{eq:3.37}),  we deduce that there exist a constant $C=C(A)$, such that
$$
\|D^2\rho\|_{L^\infty}\leq C.
$$

\vskip .2in

\noindent(3) \ The boundedness of $\|D^3\rho\|_{L^\infty}$.
\vskip .1in
Applying the operator $D^3$ to Equation (\ref{eq:1.1}), one has
\begin{equation}\label{eq:3.38}
\begin{aligned}
\partial_tD^3\rho&+Au\cdot\nabla(D^3\rho)+3ADu\cdot\nabla(D^2\rho)+3AD^2u\cdot\nabla(D\rho)+AD^3u\cdot \nabla \rho\\
&+(-\Delta)^{\frac{\alpha}{2}}D^3\rho+D^3\left(\nabla\rho\cdot\nabla K\ast\rho+\rho\Delta K\ast \rho \right)=0.
\end{aligned}
\end{equation}
The seventh term of the left-hand side of (\ref{eq:3.38}) can be expressed as
\begin{equation}\label{eq:3.39}
\begin{aligned}
D^3\left(\nabla\rho\cdot\nabla K\ast\rho\right)&=\nabla(D^3\rho)\cdot\nabla K\ast\rho
+3\nabla(D^2\rho)\cdot\nabla K\ast D\rho\\
& \ \ \ +3\nabla(D\rho)\cdot\nabla K\ast D^2\rho+\nabla \rho\cdot\nabla K\ast D^3\rho,
\end{aligned}
\end{equation}
and
\begin{equation}\label{eq:3.40}
\begin{aligned}
D^3\left(\rho\Delta K\ast \rho \right)&=D^3\rho \nabla K\ast \nabla\rho+3D^2\rho \nabla K\ast \nabla(D\rho)\\
&\ \ \ +3D\rho \nabla K\ast \nabla(D^2\rho)+\rho\Delta K\ast D^3\rho.
\end{aligned}
\end{equation}
Combining (\ref{eq:3.38}), (\ref{eq:3.39}) and (\ref{eq:3.40}), one has
\begin{equation}\label{eq:3.41}
\begin{aligned}
\partial_tD^3\rho&+Au\cdot\nabla(D^3\rho)+3ADu\cdot\nabla(D^2\rho)+3AD^2u\cdot\nabla(D\rho)+AD^3u\cdot \nabla \rho\\
&+(-\Delta)^{\frac{\alpha}{2}}D^3\rho+\nabla(D^3\rho)\cdot\nabla K\ast\rho
+3\nabla(D^2\rho)\cdot\nabla K\ast D\rho\\
&+3\nabla(D\rho)\cdot\nabla K\ast D^2\rho+\nabla \rho\cdot\nabla K\ast D^3\rho
+D^3\rho \nabla K\ast \nabla\rho\\
&+3D^2\rho \nabla K\ast \nabla(D\rho)
+3D\rho \nabla K\ast \nabla(D^2\rho)+\rho\Delta K\ast D^3\rho=0.
\end{aligned}
\end{equation}
Multiplying  both sides of (\ref{eq:3.41}) by $D^3\rho$, to obtain
\begin{equation}\label{eq:3.42}
\begin{aligned}
\frac{1}{2}\partial_t|D^3\rho|^2&+\frac{1}{2}Au\cdot\nabla(|D^3\rho|^2)+3ADu\cdot\nabla(D^2\rho)D^3\rho +3AD^2u\cdot\nabla(D\rho)D^3\rho\\
&+AD^3u\cdot \nabla\rho D^3\rho+(-\Delta)^{\frac{\alpha}{2}}D^3\rho D^3\rho+\frac{1}{2}\nabla(|D^3\rho|^2)\cdot\nabla K\ast\rho\\
&+3\nabla(D^2\rho)\cdot\nabla K\ast D\rho D^3\rho+3\nabla(D\rho)\cdot\nabla K\ast D^2\rho D^3\rho\\
&+\nabla\rho\cdot\nabla K\ast D^3\rho D^3\rho+D^3\rho \nabla K\ast \nabla\rho D^3\rho+3D^2\rho \nabla K\ast \nabla(D\rho) D^3\rho\\
&+3D\rho \nabla K\ast \nabla(D^2\rho) D^3\rho+\rho\Delta K\ast D^3\rho D^3\rho=0.
\end{aligned}
\end{equation}
As $\|\rho\|_{L^\infty}, \|D K\|_{L^1}, \|D\rho\|_{L^\infty}, \|D^2\rho\|_{L^\infty}, \|D u\|_{L^\infty}, \|D^2 u\|_{L^\infty}, \|D^3 u\|_{L^\infty}$ are bounded. Then some terms of the left-hand side of (\ref{eq:3.42}) can be estimated as
$$
\big|3ADu\cdot\nabla(D^2\rho)D^3\rho\big|\leq CA\|D^3\rho\|^2_{L^\infty},
$$
$$
\big|3AD^2u\cdot\nabla(D\rho)D^3\rho\big|\leq CA\|D^3\rho\|_{L^\infty},
$$
$$
\big|AD^3u\cdot \nabla\rho D^3\rho\big|\leq CA\|D^3\rho\|_{L^\infty},
$$
$$
\big|3\nabla(D^2\rho)\cdot\nabla K\ast D\rho D^3\rho\big|\leq C\|D^3\rho\|^2_{L^\infty},
$$
$$
\big|3\nabla(D\rho)\cdot\nabla K\ast D^2\rho D^3\rho\big|\leq C\|D^3\rho\|_{L^\infty},
$$
$$
\big|\nabla\rho\cdot\nabla K\ast D^3\rho D^3\rho\big|\leq C\|D^3\rho\|^2_{L^\infty},
$$
$$
\big|D^3\rho \nabla K\ast \nabla\rho D^3\rho\big|\leq C\|D^3\rho\|^2_{L^\infty},
$$
$$
\big|3D^2\rho \nabla K\ast \nabla(D\rho) D^3\rho\big|\leq C\|D^3\rho\|_{L^\infty},
$$
$$
\big|3D\rho \nabla K\ast \nabla(D^2\rho) D^3\rho\big|\leq C\|D^3\rho\|^2_{L^\infty}.
$$
If define
$$
\widetilde{|D^3\rho|}=|D^3\rho|(t,\overline{x}_{3,t})=\max_{x\in \mathbb{T}^d}|D^3\rho|(t,x).
$$
Then for the second term and the seventh term  of the left-hand side of (\ref{eq:3.42}), one has
$$
\frac{1}{2}Au\cdot\nabla(|D^3\rho|^2)(t,\overline{x}_{3,t})=0,
$$
and
$$
\frac{1}{2}\nabla(|D^3\rho|^2)\cdot\nabla K\ast\rho(t,\overline{x}_{3,t})=0.
$$
Denote $\overline{x}_3=(t,\overline{x}_{3,t})$, then
$$
\rho(t,\overline{x}_{3,t})=\rho(\overline{x}_3),\ \ \ \  D^3\rho(t,\overline{x}_{3,t})=D^3\rho(\overline{x}_3).
$$
By similar discussions with (\ref{eq:3.11}) and (\ref{eq:3.18}), the sixth term and the fourteenth term of the left-hand side of (\ref{eq:3.42}) can be written as
\begin{equation}\label{eq:3.43}
\begin{aligned}
&(-\Delta)^{\frac{\alpha}{2}}D^3\rho D^3\rho(\overline{x}_3)\\
&\  =C_{\alpha,d}\sum_{k\in \mathbb{Z}^d}D^3\rho(\overline{x}_3)\int_{\mathbb{T}^d}\frac{D^3\rho(\overline{x}_3)-D^3\rho(y)}
{|\overline{x}_3-y+k|^{d+\alpha}}dy\\
&\  =\frac{C_{\alpha,d}}{2}\sum_{k\in \mathbb{Z}^d\backslash\{0\}}\left( \int_{\mathbb{T}^d}\frac{[D^3\rho(\overline{x}_3)-D^3\rho(y)]^2}{|\overline{x}_3-y+k|^{d+\alpha}}dy
+\int_{\mathbb{T}^d}\frac{(D^3\rho)^2(\overline{x}_3)-(D^3\rho)^2(y)}{|\overline{x}_3-y+k|^{d+\alpha}}dy \right)\\
&\  +\frac{C_{\alpha,d}}{4}\left( \int_{\mathbb{T}^d}\frac{[D^3\rho(\overline{x}_3)-D^3\rho(y)]^2}{|\overline{x}_3-y|^{d+\alpha}}dy
+\int_{\mathbb{T}^d}\frac{(D^3\rho)^2(\overline{x}_3)-(D^3\rho)^2(y)}{|\overline{x}_3-y|^{d+\alpha}}dy \right)\\
&\  +\frac{C_{\alpha,d}}{4}\left( \int_{\mathbb{T}^d\backslash\Omega''_1}\frac{[D^3\rho(\overline{x}_3)-D^3\rho(y)]^2}{|\overline{x}_3-y|^{d+\alpha}}dy
+\int_{\mathbb{T}^d\backslash\Omega''_1}\frac{(D^3\rho)^2(\overline{x}_3)-(D^3\rho)^2(y)}{|\overline{x}_3-y|^{d+\alpha}}dy \right)\\
&\  +\frac{C_{\alpha,d}}{4}\left( \int_{\Omega''_1}\frac{[D^3\rho(\overline{x}_3)-D^3\rho(y)]^2}{|\overline{x}_3-y|^{d+\alpha}}dy
+\int_{\Omega''_1}\frac{(D^3\rho)^2(\overline{x}_3)-(D^3\rho)^2(y)}{|\overline{x}_3-y|^{d+\alpha}}dy \right),
\end{aligned}
\end{equation}
and
$$
\begin{aligned}
\rho\Delta K\ast D^3\rho D^3\rho(\overline{x}_3)
&=\rho(\overline{x}_3)D^3\rho(\overline{x}_3)\int_{\mathbb{T}^d}\Delta K(\overline{x}_3-y) D^3\rho(y)dy\\
&=-\rho(\overline{x}_3)D^3\rho(\overline{x}_3)\int_{\mathbb{T}^d}\Delta K(\overline{x}_3-y)(D^3\rho(\overline{x}_3)-D^3\rho(y))dy\\
&=-\rho(\overline{x}_3)D^3\rho(\overline{x}_3)\int_{\Omega''_1}\Delta K(\overline{x}_3-y)(D^3\rho(\overline{x}_3)-D^3\rho(y))dy\\
&\ \ \ -\rho(\overline{x}_3)D^3\rho(\overline{x}_3)\int_{\mathbb{T}^d\setminus\Omega''_1}\Delta K(\overline{x}_3-y)(D^3\rho(\overline{x}_3)-D^3\rho(y))dy,
\end{aligned}
$$
where the domain $\Omega''_1=\Omega(\overline{x}_3)$ depends on $\overline{x}_3$, and the definition of $\Omega''_1$ is similar to $\Omega_1$, the details can be referred to (\ref{eq:3.12}), (\ref{eq:3.13}) and (\ref{eq:3.14}). We deduce by  (\ref{eq:3.42}) and the same argument as step 1 that
\begin{equation}\label{eq:3.44}
\begin{aligned}
\frac{d}{dt}|\widetilde{D^3\rho}|^2\leq-\frac{C_{\alpha,d}}{2} \int_{\mathbb{T}^d}\frac{[D^3\rho(\overline{x}_3)-D^3\rho(y)]^2}{|\overline{x}_3-y|^{d+\alpha}}dy
+CA|\widetilde{D^3\rho}|^2+CA|\widetilde{D^3\rho}|.
\end{aligned}
\end{equation}
Combining the boundedness of $\|D^2\rho\|_{L^\infty}$, Corollary \ref{cor:6.2} and (\ref{eq:3.44}), we imply that there exist a constant $C=C(A)$, such that
$$
\|D^3\rho\|_{L^\infty}\leq C.
$$
This completes the proof of Proposition \ref{prop:3.1}.
\end{proof}

\vskip .2in

\begin{remark}\label{rem:11}
In fact, if $A$ is large enough, one has
$$
\|D \rho\|_{L^\infty}\leq CA^{O(\alpha^{-1})},\ \  \|D^2 \rho\|_{L^\infty}\leq CA^{O(\alpha^{-2})},\ \
\|D^3 \rho\|_{L^\infty}\leq  CA^{O(\alpha^{-3})}.
$$
\end{remark}

\vskip .1in

\begin{remark}\label{rem:12}
In the proof of Proposition \ref{prop:3.1}, if we consider the  Sobolev  space $W^{s,p}(\mathbb{T}^d), 2\leq p<\infty$, then the estimation of nonlinear term is complicated and difficult for all cases of $\beta-d<\alpha<2, \beta\in (d,d+1), d>2$ by energy method, which is form some energy inequalities. In this paper, we consider the $p=\infty,s=3$.
\end{remark}

\vskip .1in

\begin{remark}\label{rem:13}
In fact, for any $T>0$, we know that if $\rho(t,x)\in L^\infty(0,T; L^\infty(\mathbb{T}^d))$, then we deduce that $\rho(t,x)\in L^\infty(0,T; W^{s,p}(\mathbb{T}^d)),s\geq 0,p\geq1$.
\end{remark}

\vskip .2in

\section{Local estimate of solution to equation (\ref{eq:1.1})}

\vskip .1in

In this section, we establish the local $L^2, L^p(p=2^n>\frac{d}{\alpha+d-\beta}, n\in \mathbb{Z}^+)$ and $ L^\infty$ estimate of the solution to Equation (\ref{eq:1.1}), which is independent of $A$. As the attractive kernel is strong singularity, the estimates of the local $L^p$ and $ L^\infty$ are complicated and difficult. Some important Lemmas and new techniques are used in the proof. A useful lemma is as follows

\vskip .1in

\begin{lemma}\label{lem:4.1}
Let $\beta-d<\alpha<2, \beta\in (d,d+1),d>2$, $\rho(t,x)$ is the solution of Equation (\ref{eq:1.1}) with initial data $\rho_0(x)\geq0$. If $\rho\in L^{p}(\mathbb{T}^d), 1\leq p<\infty$, and we denote
\begin{equation}\label{eq:4.1}
\widetilde{\rho}(t)=\rho(t,\overline{x}_t)=\max_{x\in \mathbb{T}^d}\rho(t,x).
\end{equation}
Then
\begin{equation}\label{eq:4.2}
\widetilde{\rho}(t)\leq C(d,p)\|\rho\|_{L^p},
\end{equation}
or
\begin{equation}\label{eq:4.3}
\frac{d}{dt}\widetilde{\rho}\leq -\frac{C(\alpha, d, p)}{2}\frac{\widetilde{\rho}^{1+\frac{p\alpha}{d}}}{\|\rho\|^{\frac{p\alpha}{d}}_{L^p}}+C\widetilde{\rho}^{2+\frac{\beta-d}{d+\alpha-\beta}}
+C\widetilde{\rho}^2.
\end{equation}
\end{lemma}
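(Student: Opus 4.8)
The plan is to run the maximum-principle computation of Proposition \ref{prop:3.1} directly on $\rho$ rather than on its derivatives. Since by Proposition \ref{prop:3.1} the solution lies in $C([0,T];W^{3,\infty}(\mathbb{T}^d))$, the function $\widetilde{\rho}(t)=\max_{x\in\mathbb{T}^d}\rho(t,x)$ is locally Lipschitz and satisfies, for a.e.\ $t$, $\frac{d}{dt}\widetilde{\rho}(t)=\partial_t\rho(t,\overline{x}_t)$ for any maximiser $\overline{x}_t$. I would evaluate Equation (\ref{eq:1.1}) at $\overline{x}_t$: since $\nabla\rho(t,\overline{x}_t)=0$, the advection term $Au\cdot\nabla\rho$ vanishes (which is why the resulting bound carries no factor of $A$) and, in the decomposition (\ref{eq:1.5}), the term $\nabla\rho\cdot\nabla K\ast\rho$ vanishes as well. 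One is left with
$$
\frac{d}{dt}\widetilde{\rho}=-(-\Delta)^{\frac{\alpha}{2}}\rho(\overline{x}_t)-\rho(\overline{x}_t)\,\Delta K\ast\rho(\overline{x}_t).
$$

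Next I would expand both nonlocal terms via the kernel representation (\ref{eq:2.1}). Because $\rho(\overline{x}_t)=\widetilde{\rho}\ge\rho(y)$ everywhere, every periodic image $k\neq 0$ contributes nonnegatively, so $(-\Delta)^{\frac{\alpha}{2}}\rho(\overline{x}_t)\ge C_{\alpha,d}\int_{\mathbb{T}^d}\frac{\widetilde{\rho}-\rho(y)}{|\overline{x}_t-y|^{d+\alpha}}\,dy$. For the nonlinear term, using $\Delta K\ast\rho=-(-\Delta)^{\frac{\beta-d}{2}}\rho$ up to a bounded smoothing correction (legitimate since $\frac{\beta-d}{2}<\frac12$ and $\rho$ is $C^2$, so the associated singular integral converges absolutely) and recalling $\Delta K(x)=C_{\beta,d}|x|^{-\beta}$ with $C_{\beta,d}=\beta-d$ near the origin, one gets
$$
-\rho(\overline{x}_t)\,\Delta K\ast\rho(\overline{x}_t)\le C_{\beta,d}\,\widetilde{\rho}\int_{\mathbb{T}^d}\frac{\widetilde{\rho}-\rho(y)}{|\overline{x}_t-y|^{\beta}}\,dy+C\widetilde{\rho}^2,
$$
the $C\widetilde{\rho}^2$ collecting the periodic images and the smoothing correction. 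The heart of the argument is to absorb the (positive, destabilizing) singular part of this term into a portion of the dissipation, exactly as in (\ref{eq:3.12})--(\ref{eq:3.22}): with $\delta_1=C_{\beta,d}\widetilde{\rho}$, $\delta_2=\frac{C_{\alpha,d}}{4}$, $\epsilon_0=\min\{\tfrac14,(\delta_2/\delta_1)^{\frac{1}{d+\alpha-\beta}}\}$ and $\Omega_1=\{y:|\overline{x}_t-y|\le\epsilon_0\}$, the inequality $\beta<d+\alpha$ gives $\delta_1|\overline{x}_t-y|^{-\beta}\le\delta_2|\overline{x}_t-y|^{-(d+\alpha)}$ on $\Omega_1$, so the part of the nonlinear term over $\Omega_1$ is dominated by $\tfrac{C_{\alpha,d}}{4}\int_{\mathbb{T}^d}\frac{\widetilde{\rho}-\rho(y)}{|\overline{x}_t-y|^{d+\alpha}}\,dy$ and cancels against a quarter of the dissipation. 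Over $\mathbb{T}^d\setminus\Omega_1$ I would bound $\widetilde{\rho}-\rho(y)\le\widetilde{\rho}$ and use the sharp tail estimate $\int_{|\overline{x}_t-y|\ge\epsilon_0}|\overline{x}_t-y|^{-\beta}\,dy\le C\epsilon_0^{d-\beta}$ (valid precisely because $\beta>d$), together with $\epsilon_0^{d-\beta}\le C(1+\widetilde{\rho}^{\frac{\beta-d}{d+\alpha-\beta}})$ coming from the definition of $\epsilon_0$; this is exactly what produces the exponent $2+\frac{\beta-d}{d+\alpha-\beta}$ in (\ref{eq:4.3}). Keeping half of the dissipation in reserve, one arrives at
$$
\frac{d}{dt}\widetilde{\rho}\le-\frac{C_{\alpha,d}}{2}\int_{\mathbb{T}^d}\frac{\widetilde{\rho}-\rho(y)}{|\overline{x}_t-y|^{d+\alpha}}\,dy+C\widetilde{\rho}^{2+\frac{\beta-d}{d+\alpha-\beta}}+C\widetilde{\rho}^2.
$$

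Finally I would invoke the nonlinear maximum principle: by Lemma \ref{lem:2.3} (precisely, the dichotomy established in its proof, applied to the $k=0$ contribution) either $\widetilde{\rho}\le C(d,p)\|\rho\|_{L^p}$, which is (\ref{eq:4.2}), or $\int_{\mathbb{T}^d}\frac{\widetilde{\rho}-\rho(y)}{|\overline{x}_t-y|^{d+\alpha}}\,dy\ge C(\alpha,d,p)\,\widetilde{\rho}^{1+\frac{p\alpha}{d}}\|\rho\|_{L^p}^{-\frac{p\alpha}{d}}$; inserting the latter and relabelling constants gives (\ref{eq:4.3}). I expect the main obstacle to be the middle step: one must (i) keep the destabilizing term with the correct sign, which forces the interpretation of $\Delta K\ast\rho$ as a fractional Laplacian up to a bounded correction, and (ii) track the $\widetilde{\rho}$--dependent cutoff $\epsilon_0$ carefully enough to obtain the sharp exponent $2+\frac{\beta-d}{d+\alpha-\beta}$ rather than the cruder $2+\frac{\beta}{d+\alpha-\beta}$ that the naive tail bound $\int_{|z|\ge\epsilon_0}|z|^{-\beta}\le C\epsilon_0^{-\beta}$ would yield.
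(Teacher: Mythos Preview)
Your proposal is correct and follows essentially the same route as the paper: evaluate (\ref{eq:1.1}) at the spatial maximum, split the singular term $\widetilde{\rho}\,\Delta K\ast\rho(\overline{x}_t)$ according to a $\widetilde{\rho}$-dependent ball of radius $\epsilon\sim\widetilde{\rho}^{-1/(d+\alpha-\beta)}$, absorb the near-origin part into a fraction of the dissipative kernel (which is exactly the paper's (\ref{eq:4.10})), bound the far part by $C\widetilde{\rho}^{2+\frac{\beta-d}{d+\alpha-\beta}}+C\widetilde{\rho}^2$ (the paper's (\ref{eq:4.9})), and then apply the dichotomy of Lemma~\ref{lem:2.3} to the remaining $\tfrac{1}{2}(-\Delta)^{\alpha/2}\widetilde{\rho}$. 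The only cosmetic difference is that the paper writes $\Delta K\ast\rho(\overline{x})=-\int_{\mathbb{T}^d}\Delta K(\overline{x}-y)(\rho(\overline{x})-\rho(y))\,dy$ directly from $\int\Delta K=0$, rather than passing through the fractional-Laplacian interpretation you mention; the resulting estimates and constants are the same.
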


\vskip .1in

\begin{proof}
For any fixed $t\geq0$, using the vanishing of a derivation at the point of maximum, we see that
$$
\partial_t \rho(t,\overline{x}_t)=\frac{d}{dt}\widetilde{\rho}(t),\quad
(u\cdot \nabla\rho)(t,\overline{x}_t)=u\cdot \nabla\widetilde{\rho}(t)=0.
$$
If we denote
$$
(-\Delta)^{\frac{\alpha}{2}}\rho(t,x)\big|_{x=\overline{x}_t}
=(-\Delta)^{\frac{\alpha}{2}}\widetilde{\rho}(t),
$$
then
$$
\left(\nabla\cdot(\rho B(\rho))\right)(t,\overline{x}_t)
=\widetilde{\rho}\Delta K\ast\rho(t,\overline{x}_t).
$$
Thus, we deduce by the Equation (\ref{eq:1.1}) that  the evolution of $\widetilde{\rho}$ follows
\begin{equation}\label{eq:4.4}
\frac{d}{dt}\widetilde{\rho}+(-\Delta)^{\frac{\alpha}{2}}\widetilde{\rho}
+\widetilde{\rho}\Delta K\ast\rho(t,\overline{x}_t)=0.
\end{equation}
Denote $\overline{x}=(t, \overline{x_t}), x=(t,x)$, then
\begin{equation}\label{eq:4.5}
\rho(t, \overline{x_t})=\rho(\overline{x}), \ \ \ \ \rho(t,x)=\rho(x).
\end{equation}
According to the definitions of fractional Laplancian $(-\Delta)^{\frac{\alpha}{2}}$ and the periodic convolution kernel $K$, the second term and the third  term of the left-hand side of (\ref{eq:4.4}) can be written as
\begin{equation}\label{eq:4.6}
\begin{aligned}
&(-\Delta)^{\frac{\alpha}{2}}\widetilde{\rho}+\widetilde{\rho}\Delta K\ast\rho(\overline{x})\\
&=C_{\alpha,d}\sum_{k\in\mathbb{Z}^d}\int_{\mathbb{T}^d}
\frac{\rho(\overline{x})-\rho(y)}{|\overline{x}-y+k|^{d+\alpha}}dy\\
&\ \ -\widetilde{\rho}\int_{\mathbb{T}^d\setminus\Omega_2}\Delta K(\overline{x}-y)(\rho(\overline{x})-\rho(y))dy
-C_{\beta,d}\widetilde{\rho}\int_{\Omega_2}\frac{\rho(\overline{x})-\rho(y)}{|\overline{x}-y|^{\beta}}dy\\
&=C_{\alpha,d}\sum_{k\in\mathbb{Z}^d\setminus\{0\}}\int_{\mathbb{T}^d}
\frac{\rho(\overline{x})-\rho(y)}{|\overline{x}-y+k|^{d+\alpha}}dy+\frac{C_{\alpha,d}}{2}\int_{\mathbb{T}^d}
\frac{\rho(\overline{x})-\rho(y)}{|\overline{x}-y|^{d+\alpha}}dy\\
&\ \ +\frac{C_{\alpha,d}}{2}\int_{\mathbb{T}^d\setminus\Omega_2}
\frac{\rho(\overline{x})-\rho(y)}{|\overline{x}-y|^{d+\alpha}}dy-\widetilde{\rho}\int_{\mathbb{T}^d\setminus\Omega_2}\Delta K(\overline{x}-y)(\rho(\overline{x})-\rho(y))dy\\
&\ \ -\left(C_{\beta,d}\widetilde{\rho}\int_{\Omega_2}\frac{\rho(\overline{x})-\rho(y)}{|\overline{x}-y|^{\beta}}dy
-\frac{C_{\alpha,d}}{2}\int_{\Omega_2}\frac{\rho(\overline{x})-\rho(y)}{|\overline{x}-y|^{d+\alpha}}dy\right),
\end{aligned}
\end{equation}
where $C_{\alpha,d}$ is defined in (\ref{eq:2.2}), $C_{\beta,d}$ is defined in (\ref{eq:3.12}). And the domain $\Omega_2=\Omega(\overline{x})$ depends on $\overline{x}$, which is defined by
\begin{equation}\label{eq:4.7}
\Omega_2=\left\{y\in \mathbb{T}^d \big| |\overline{x}-y|\leq \epsilon_1 \right\},\ \ \
\epsilon_1=\min \left\{ \left(\frac{C_{\alpha,d}}{2C_{\beta,d}}\right)^{\frac{1}{d+\alpha-\beta}}
\left(\frac{1}{\rho(\overline{x})}\right)^{\frac{1}{d+\alpha-\beta}}, \frac{1}{4}\right\}.
\end{equation}
Since  $\overline{x}$ is the point of the maximum value of $\rho$, then for the third term of the right-hand side of (\ref{eq:4.6}), one has
\begin{equation}\label{eq:4.8}
\frac{C_{\alpha,d}}{2}\int_{\mathbb{T}^d\setminus\Omega_2}
\frac{\rho(\overline{x})-\rho(y)}{|\overline{x}-y|^{d+\alpha}}dy\geq0.
\end{equation}
For the fourth term of the right-hand side of (\ref{eq:4.6}), we deduce by the definition of $\Omega_2$ in (\ref{eq:4.7}) that
\begin{equation}\label{eq:4.9}
\left|\widetilde{\rho}\int_{\mathbb{T}^d\setminus\Omega_2}\Delta K(\overline{x}-y)(\rho(\overline{x})-\rho(y))dy\right|
\leq C\rho(\overline{x})^{2+\frac{\beta-d}{d+\alpha-\beta}}
+C\rho(\overline{x})^2.
\end{equation}
According to $\alpha>\beta-d$ and the definition of $\Omega_2$ in (\ref{eq:4.7}), the fifth term of the right-hand side of (\ref{eq:4.6}), one has
\begin{equation}\label{eq:4.10}
\begin{aligned}
&C_{\beta,d}\widetilde{\rho}\int_{\Omega_2}\frac{\rho(\overline{x})-\rho(y)}{|\overline{x}-y|^{\beta}}dy
-\frac{C_{\alpha,d}}{2}\int_{\Omega_2}\frac{\rho(\overline{x})-\rho(y)}{|\overline{x}-y|^{d+\alpha}}dy\\
&=C_{\beta,d}\widetilde{\rho}\int_{\Omega_2}\frac{\rho(\overline{x})-\rho(y)}{|\overline{x}-y|^{\beta}}dy
-\frac{C_{\alpha,d}}{2}\int_{\Omega_2}\frac{\frac{\rho(\overline{x})-\rho(y)}{|\overline{x}-y|^{d+\alpha-\beta}}}
{|\overline{x}-y|^{\beta}}dy\\
&\leq0.
\end{aligned}
\end{equation}
Combining (\ref{eq:4.6}),  (\ref{eq:4.8}), (\ref{eq:4.9}) and (\ref{eq:4.10}), to obtain
\begin{equation}\label{eq:4.11}
\begin{aligned}
&-(-\Delta)^{\frac{\alpha}{2}}\widetilde{\rho}-\widetilde{\rho}\Delta K\ast\rho(\overline{x})\\
&\leq-C_{\alpha,d}\sum_{k\in\mathbb{Z}^d\setminus\{0\}}\int_{\mathbb{T}^d}
\frac{\rho(\overline{x})-\rho(y)}{|\overline{x}-y+k|^{d+\alpha}}dy-\frac{C_{\alpha,d}}{2}\int_{\mathbb{T}^d}
\frac{\rho(\overline{x})-\rho(y)}{|\overline{x}-y|^{d+\alpha}}dy\\
&\ \ -\frac{C_{\alpha,d}}{2}\int_{\mathbb{T}^d\setminus\Omega_2}
\frac{\rho(\overline{x})-\rho(y)}{|\overline{x}-y|^{d+\alpha}}dy+C\rho(\overline{x})^{2+\frac{\beta-d}{d+\alpha-\beta}}
+C\rho(\overline{x})^2\\
&\leq -\frac{C_{\alpha,d}}{2}\sum_{k\in\mathbb{Z}^d}\int_{\mathbb{T}^d}
\frac{\rho(\overline{x})-\rho(y)}{|\overline{x}-y+k|^{d+\alpha}}dy+C\rho(\overline{x})^{2+\frac{\beta-d}{d+\alpha-\beta}}
+C\rho(\overline{x})^2.
\end{aligned}
\end{equation}
According to (\ref{eq:4.4}), (\ref{eq:4.11}) and the definition of $(-\Delta)^{\frac{\alpha}{2}}$ in (\ref{eq:2.1}), we have
\begin{equation}\label{eq:4.12}
\frac{d}{dt}\widetilde{\rho}\leq -\frac{1}{2}(-\Delta)^{\frac{\alpha}{2}}\widetilde{\rho}+C\widetilde{\rho}^{2+\frac{\beta-d}{d+\alpha-\beta}}
+C\widetilde{\rho}^2.
\end{equation}
Through nonlinear maximum principle (see Lemma \ref{lem:2.3}), if $\widetilde{\rho}$ satisfies
$$
\widetilde{\rho}(t)\leq C(d,p)\|\rho\|_{L^p},
$$
we finish the proof of (\ref{eq:4.2}). If not, $\widetilde{\rho}$ must satisfy
\begin{equation}\label{eq:4.13}
(-\Delta)^{\frac{\alpha}{2}}\widetilde{\rho}(t)\geq C(\alpha, d, p)\frac{\widetilde{\rho}^{1+\frac{p\alpha}{d}}}{\|\rho\|^{\frac{p\alpha}{d}}_{L^p}}.
\end{equation}
Thus, we deduce by (\ref{eq:4.12}) and (\ref{eq:4.13}) that
$$
\frac{d}{dt}\widetilde{\rho}\leq -\frac{C(\alpha, d, p)}{2}\frac{\widetilde{\rho}^{1+\frac{p\alpha}{d}}}{\|\rho\|^{\frac{p\alpha}{d}}_{L^p}}+C\widetilde{\rho}^{2+\frac{\beta-d}{d+\alpha-\beta}}
+C\widetilde{\rho}^2,
$$
we finish the proof of (\ref{eq:4.3}). This completes the proof of Lemma \ref{lem:4.1}.
\end{proof}

\vskip .1in

In this paper, we consider the initial data $\rho_0$ satisfies
\begin{equation}\label{eq:4.14}
\rho_0\in W^{3,\infty}(\mathbb{T}^d),\ \ \  \rho_0\geq0.
\end{equation}
Without loss generality, suppose that there exists positive constant $B_0, D_0, C_{\infty}$, such that
\begin{equation}\label{eq:4.15}
\|\rho_0\|_{L^2}\leq B_0,\ \ \ \|\rho_0-\overline{\rho}\|_{L^p}\leq D_0,\ \ \  \|\rho_0\|_{L^\infty}\leq C_{\infty},
\end{equation}
where
\begin{equation}\label{eq:4.16}
\overline{\rho}=\frac{1}{|\mathbb{T}^d|}\int_{\mathbb{T}^d}\rho_0(x)dx,\ \ \     p=2^n>\frac{d}{\alpha+d-\beta}, \ n\in \mathbb{Z}^+, \ \ \ C_\infty\geq 2C(d,p)(D_0+\overline{\rho}),
\end{equation}
and $C(d,p)$ is defined in Lemma \ref{lem:2.3}. Denote
\begin{equation}\label{eq:4.17}
B_1=\min \left\{(B_0^2-\overline{\rho}^2)^{\frac{1}{2}},\left( \frac{D_0}{(2C_\infty+\overline{\rho})^{1-\frac{2}{p}}}\right)^{\frac{p}{2}}\right\}.
\end{equation}

\vskip .1in

Next, we establish the local $L^2$, $L^p(p=2^n>\frac{d}{\alpha+d-\beta})$ and $L^\infty$ estimates of the solution to Equation (\ref{eq:1.1}), and local time is independent of $A$. The local $L^\infty$ estimate of solution is obvious by Lemma \ref{lem:4.1}, it is as follows

\vskip .1in
\begin{corollary}\label{cor:4.2}
Let $\beta-d<\alpha<2, \beta\in(d,d+1),d>2$, $\rho(t,x)$ is the solution of Equation (\ref{eq:1.1}) with initial data $\rho_0(x)$. Suppose that the initial data $\rho_0(x)$ satisfies (\ref{eq:4.22})-(\ref{eq:4.24}). Then there exist a time $\tau_0>0$, such that for any $0\leq t \leq \tau_0$, we have
$$
\|\rho(t,\cdot)\|_{L^\infty}\leq 2C_{\infty}.
$$
\end{corollary}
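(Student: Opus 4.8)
\emph{Proof idea.} The plan is to read off, from the proof of Lemma \ref{lem:4.1}, a closed scalar differential inequality for $\widetilde{\rho}(t)=\|\rho(t,\cdot)\|_{L^\infty}$ that is completely insensitive to the advection, and then to integrate it by an elementary ODE comparison. The structural point that makes this work is that at a spatial maximum $\overline{x}_t$ of $\rho(t,\cdot)$ one has $\nabla\rho(t,\overline{x}_t)=0$, so both the advection term $Au\cdot\nabla\rho$ and the first-order term $\nabla\rho\cdot\nabla K\ast\rho$ disappear from the evolution of $\widetilde{\rho}$; this is exactly why the resulting local time $\tau_0$ can be chosen independent of $A$.

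First I would invoke Proposition \ref{prop:3.1} to obtain a non-negative classical solution in $C([0,T^\ast],W^{3,\infty}(\mathbb{T}^d))$, so that $\widetilde{\rho}$ is Lipschitz in $t$ and it suffices to estimate its upper right Dini derivative. Evaluating Equation (\ref{eq:1.1}) at $\overline{x}_t$ produces (\ref{eq:4.4}), and carrying out the splitting (\ref{eq:4.6})--(\ref{eq:4.12}) of the proof of Lemma \ref{lem:4.1} gives
\[
\frac{d}{dt}\widetilde{\rho}(t)\leq-\frac{1}{2}(-\Delta)^{\frac{\alpha}{2}}\widetilde{\rho}(t)+C\,\widetilde{\rho}(t)^{\gamma}+C\,\widetilde{\rho}(t)^{2},\qquad \gamma:=2+\frac{\beta-d}{d+\alpha-\beta}>2 .
\]
Since $(-\Delta)^{\frac{\alpha}{2}}\widetilde{\rho}(t)\geq 0$ (the value at a maximum minus a non-negative singular average, cf. (\ref{eq:2.1})), I simply drop that term and am left with $\frac{d}{dt}\widetilde{\rho}\leq C(\widetilde{\rho}^{\gamma}+\widetilde{\rho}^{2})$, with $C=C(\alpha,\beta,d)$ the constant from (\ref{eq:4.9}).

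Next, using $\widetilde{\rho}(0)=\|\rho_0\|_{L^\infty}\leq C_\infty$ (see (\ref{eq:4.15})), I would compare with the autonomous ODE $\dot z=C(z^{\gamma}+z^{2})$, $z(0)=C_\infty$. Its solution exists on a maximal interval $[0,T_{\max})$ with $T_{\max}>0$, and by continuity there is $\tau_0\in(0,T_{\max})$, given by an explicit formula in $C_\infty,\alpha,\beta,d$ only, with $z(t)\leq 2C_\infty$ on $[0,\tau_0]$. A comparison principle for scalar differential inequalities then yields $\widetilde{\rho}(t)\leq z(t)\leq 2C_\infty$ on $[0,\tau_0]$; together with the continuation criterion of Proposition \ref{prop:3.1} this also guarantees that the classical solution actually persists on all of $[0,\tau_0]$. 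In particular $\tau_0$ does not depend on $A$.

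The only delicate point -- and the main thing I would need to be careful about -- is that the constant $C$ in the differential inequality must be genuinely independent of $\|\rho\|_{L^\infty}$, since otherwise the comparison argument would be circular. This is precisely what is secured by the choice of the ball $\Omega_2$ of radius $\epsilon_1\sim\rho(\overline{x})^{-1/(d+\alpha-\beta)}$ in (\ref{eq:4.7}): near the maximum the strongly singular part of $\Delta K$ is absorbed by a fraction of the fractional dissipation (inequality (\ref{eq:4.10})), while the remaining far-field part is controlled in (\ref{eq:4.9}) by $C\widetilde{\rho}^{\gamma}+C\widetilde{\rho}^{2}$ with $C$ depending on $\alpha,\beta,d$ only. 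Hence the substantive work has already been done in Lemma \ref{lem:4.1}, and the proof of the corollary reduces to the soft ODE comparison described above.
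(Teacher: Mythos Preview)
Your proposal is correct and follows essentially the same approach as the paper: both derive the scalar inequality $\frac{d}{dt}\widetilde{\rho}\le C\widetilde{\rho}^{2+\frac{\beta-d}{d+\alpha-\beta}}+C\widetilde{\rho}^{2}$ by discarding the non-negative dissipative term in (\ref{eq:4.12}), and then integrate this autonomous ODE-type bound to obtain an explicit $\tau_0$ independent of $A$. The paper simply records $\tau_0=\min\{[C(2C_\infty+(2C_\infty)^{1+\frac{\beta-d}{d+\alpha-\beta}})]^{-1},T^\ast\}$ and solves the differential inequality directly, which is exactly your ODE comparison made concrete.
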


\vskip .05in

\begin{proof}
According to the definition of the $\widetilde{\rho}$ in (\ref{eq:4.1}), one has
\begin{equation}\label{eq:4.18}
\widetilde{\rho}\geq 0,\ \ \ \  (-\Delta)^{\frac{\alpha}{2}}\widetilde{\rho}\geq0.
\end{equation}
Combining (\ref{eq:4.12}) and (\ref{eq:4.18}), to obtain
\begin{equation}\label{eq:4.19}
\frac{d}{dt}\widetilde{\rho}\leq C\widetilde{\rho}^{2+\frac{\beta-d}{d+\alpha-\beta}}
+C\widetilde{\rho}^2.
\end{equation}
As $\|\rho_0\|_{L^\infty}\leq C_{\infty}$, if define
\begin{equation}\label{eq:4.20}
\tau_0=\min\left\{ \frac{1}{C\left(2C_\infty+(2C_\infty)^{1+\frac{\beta-d}{d+\alpha-\beta}}\right)},T^\ast \right\},
\end{equation}
where $T^\ast$ is defined in Proposition \ref{prop:3.1}. Then by solving the differential inequality in (\ref{eq:4.19}), for any $0\leq t\leq \tau_0$, one has
\begin{equation}\label{eq:4.21}
\|\rho(t,\cdot)\|_{L^\infty}\leq 2C_{\infty}.
\end{equation}
This completes the proof of Corollary \ref{cor:4.2}.
\end{proof}

\vskip .1in

For the local $L^p(p=2^n>\frac{d}{\alpha+d-\beta}, n\in \mathbb{Z}^+)$ estimate of solution to Equation (\ref{eq:1.1}),
we obtain the  $L^p$ estimate of solution is independent of $A$. Some new techniques are used in the proof.

\vskip .1in

\begin{lemma}\label{lem:4.3}
Let $\beta-d<\alpha<2, \beta\in(d,d+1),d>2$, $\rho(t,x)$ is the solution of Equation (\ref{eq:1.1}) with initial data $\rho_0(x)$. Suppose that $\rho_0(x)$ satisfies (\ref{eq:4.14})-(\ref{eq:4.16}). Then there exist time $\tau'_0>0$, such that for any $0\leq t \leq \tau'_0$, we have
$$
\|\rho(t,\cdot)\|_{L^p}\leq 2(D_0+\overline{\rho}).
$$
\end{lemma}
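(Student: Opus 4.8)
The plan is to run an $L^p$ energy estimate directly on $\rho$ (which is $\geq0$), and it rests on two structural facts. First, $\|\rho\|_{L^1}$ is conserved by Proposition \ref{prop:3.1} and $\rho\geq0$, so $\overline{\rho}$ stays the mean of $\rho(t,\cdot)$ and $\|\rho_0\|_{L^p}\leq\|\rho_0-\overline{\rho}\|_{L^p}+\overline{\rho}\,|\mathbb{T}^d|^{1/p}\leq D_0+\overline{\rho}$. Second — and this is why the estimate will be independent of $A$ — the advection $Au\cdot\nabla\rho$ is divergence free, so $\int\rho^{p-1}u\cdot\nabla\rho\,dx=\frac1p\int u\cdot\nabla(\rho^p)\,dx=0$ and it contributes nothing to $\frac{d}{dt}\|\rho\|_{L^p}^p$; there is therefore no $A$-dependent term to begin with. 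Multiplying Equation (\ref{eq:1.1}) by $p\rho^{p-1}$, integrating over $\mathbb{T}^d$, using the decomposition (\ref{eq:1.5}) and the identity $\rho^{p-1}\nabla\rho=\frac1p\nabla(\rho^p)$ to integrate by parts once, I get
\[
\frac{d}{dt}\|\rho\|_{L^p}^p=-p\int_{\mathbb{T}^d}\rho^{p-1}(-\Delta)^{\frac{\alpha}{2}}\rho\,dx-(p-1)\int_{\mathbb{T}^d}\rho^p\,(\Delta K\ast\rho)\,dx.
\]
The dissipative term is nonpositive and, by the Positivity Lemma \ref{lem:2.1}, controls a nonnegative multiple of $\|(-\Delta)^{\frac{\alpha}{4}}\rho^{p/2}\|_{L^2}^2$; the work is all in the second, strongly singular, term.

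For that term I would write $\Delta K\ast\rho$ in the localized singular-integral form already used in (\ref{eq:4.6}), $\Delta K\ast\rho(x)=C_{\beta,d}\int_{\Omega(x)}\frac{\rho(y)-\rho(x)}{|x-y|^{\beta}}\,dy+(\text{a part with bounded kernel on }\mathbb{T}^d\setminus\Omega(x))$, where $\Omega(x)$ is a ball whose radius has a uniform lower bound $\epsilon_\ast$ depending only on $\alpha,\beta,d$ and $\|\rho\|_{L^\infty}$; by Corollary \ref{cor:4.2} there is $\tau_0>0$ with $\|\rho(t,\cdot)\|_{L^\infty}\leq2C_\infty$ on $[0,\tau_0]$, so $\epsilon_\ast$ and every constant below is $A$-independent. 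The bounded-kernel part contributes at most $C(1+\|\rho\|_{L^\infty})\|\rho\|_{L^p}^p$ (here $\beta\in(d,d+1)$ is used, so that $\nabla K\in L^1$ and $\int_{\mathbb{T}^d\setminus\Omega(x)}|\Delta K|$ grows only like $\epsilon_\ast^{\,d-\beta}$). After symmetrizing in $x\leftrightarrow y$, the remaining near-diagonal piece of $-(p-1)\int\rho^p\,\Delta K\ast\rho$ equals $\frac{(p-1)C_{\beta,d}}{2}\iint_{|x-y|\leq\epsilon_\ast}\frac{(\rho(x)^p-\rho(y)^p)(\rho(x)-\rho(y))}{|x-y|^{\beta}}\,dy\,dx\geq0$, the destabilizing contribution. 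I would bound it using first the elementary inequality $0\leq(\rho(x)^p-\rho(y)^p)(\rho(x)-\rho(y))\leq\frac{p}{p-1}\|\rho\|_{L^\infty}\,(\rho(x)^{p-1}-\rho(y)^{p-1})(\rho(x)-\rho(y))$ (valid since $0\leq\rho\leq\|\rho\|_{L^\infty}$) and then $|x-y|^{-\beta}\leq\epsilon_\ast^{\,d+\alpha-\beta}|x-y|^{-d-\alpha}$ on $\{|x-y|\leq\epsilon_\ast\}$, which is legitimate precisely because $d+\alpha-\beta>0$ under the standing hypothesis $\alpha>\beta-d$. This turns the destabilizing piece into $\tfrac{p}{2}C_{\beta,d}\,\epsilon_\ast^{\,d+\alpha-\beta}\|\rho\|_{L^\infty}$ times $\iint_{|x-y|\leq\epsilon_\ast}\frac{(\rho(x)^{p-1}-\rho(y)^{p-1})(\rho(x)-\rho(y))}{|x-y|^{d+\alpha}}\,dy\,dx$, which after the same symmetrization is $\leq\tfrac12 p\int\rho^{p-1}(-\Delta)^{\frac{\alpha}{2}}\rho\,dx$ once $\epsilon_\ast$ (equivalently $C_\infty$) is small enough. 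Thus the fractional dissipation absorbs the destabilizing term — exactly the mechanism of (\ref{eq:3.20})--(\ref{eq:3.24}) in Proposition \ref{prop:3.1} — and one is left with
\[
\frac{d}{dt}\|\rho\|_{L^p}^p\leq-\frac p2\int_{\mathbb{T}^d}\rho^{p-1}(-\Delta)^{\frac{\alpha}{2}}\rho\,dx+C\|\rho\|_{L^p}^p\leq C\|\rho\|_{L^p}^p\quad\text{on }[0,\tau_0],
\]
with $C=C(\alpha,\beta,d,p,C_\infty)$.

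Since $\|\rho_0\|_{L^p}\leq D_0+\overline{\rho}$, a Gr\"onwall argument then gives $\|\rho(t,\cdot)\|_{L^p}\leq(D_0+\overline{\rho})e^{Ct/p}$, so choosing $\tau_0'=\min\{\tau_0,\tfrac{p\ln2}{C}\}$ yields $\|\rho(t,\cdot)\|_{L^p}\leq2(D_0+\overline{\rho})$ for $0\leq t\leq\tau_0'$, and $\tau_0'$ depends only on $\alpha,\beta,d,p,C_\infty,\overline{\rho},D_0$, never on $A$. The main obstacle is the estimate of $\int\rho^p\,\Delta K\ast\rho$: because $\Delta K$ is not integrable (strong singularity) and the only control on derivatives of $\rho$ coming from Proposition \ref{prop:3.1} blows up with $A$ (Remark \ref{rem:11}) and is therefore useless here, this term cannot be bounded on its own and must be traded against the dissipation; organizing the constants cleanly enough to see that the final $C$ — hence $\tau_0'$ — is genuinely $A$-independent is the delicate part, and it is exactly there that the assumption $\alpha>\beta-d$ is essential.
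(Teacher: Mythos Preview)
Your proof is correct and follows the same overall strategy as the paper: multiply by $\rho^{p-1}$, kill the advection by incompressibility, split the singular term $\rho^p\,\Delta K\ast\rho$ into a near-diagonal piece (absorbed into the fractional dissipation using $\alpha>\beta-d$) and a far piece (bounded via the $L^\infty$ control from Corollary~\ref{cor:4.2}), then Gr\"onwall. Your choice $\tau_0'=\min\{\tau_0,\frac{p\ln 2}{C}\}$ is exactly the paper's (\ref{eq:4.54}).

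The execution differs in one useful way. The paper works \emph{pointwise}: it decomposes $(-\Delta)^{\alpha/2}\rho\,\rho^{p-1}(x)+\frac{p-1}{p}\Delta K\ast\rho\,\rho^p(x)$ as in (\ref{eq:4.26})--(\ref{eq:4.33}), and must then prove that several integrated leftover terms are nonnegative --- this is where the restriction $p=2^n$ enters, via the iterated halving trick (\ref{eq:4.35})--(\ref{eq:4.40}) --- and must also estimate a residual dissipative piece on $\mathbb{T}^d\setminus\Omega_3$ in (\ref{eq:4.43})--(\ref{eq:4.46}). You instead symmetrize \emph{after} integrating and use the elementary inequality $(a^p-b^p)(a-b)\leq\frac{p}{p-1}M(a^{p-1}-b^{p-1})(a-b)$ for $0\leq a,b\leq M$ to compare directly with the symmetrized form of $\int\rho^{p-1}(-\Delta)^{\alpha/2}\rho$. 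This sidesteps both complications: you never need $p=2^n$, and since you only borrow the near-diagonal part of the dissipation for the absorption, there is no $C_\ast$-type leftover to estimate. One wording quibble: ``once $\epsilon_\ast$ (equivalently $C_\infty$) is small enough'' is misleading --- $C_\infty$ is data, not a free parameter; you mean that $\epsilon_\ast$ is \emph{chosen} small enough in terms of $C_\infty$, which is exactly the paper's choice (\ref{eq:4.29}).
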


\vskip .1in

\begin{proof}
Let us multiply both sides of Equation (\ref{eq:1.1}) by $|\rho|^{p-2}\rho$ and integrate over $\mathbb{T}^d$, to obtain
\begin{equation}\label{eq:4.22}
\begin{aligned}
\frac{1}{p}\frac{d}{dt}\|\rho\|_{L^p}^p+A\int_{\mathbb{T}^d}u\cdot\nabla \rho|\rho|^{p-2}\rho dx&+\int_{\mathbb{T}^d}(-\Delta)^{\frac{\alpha}{2}}\rho|\rho|^{p-2}\rho dx\\
&+\int_{\mathbb{T}^d}\nabla\cdot(\rho B(\rho))|\rho|^{p-2}\rho dx=0.
\end{aligned}
\end{equation}
For the second term of the left-hand side of (\ref{eq:4.22}), we deduce by the incompressibility of $u$ that
\begin{equation}\label{eq:4.23}
A\int_{\mathbb{T}^d}u\cdot\nabla \rho|\rho|^{p-2}\rho dx=0.
\end{equation}
As $\rho(t,x)\geq 0$, the fourth term of the left-hand side of (\ref{eq:4.22}) can be written as
\begin{equation}\label{eq:4.24}
\begin{aligned}
\int_{\mathbb{T}^d}\nabla\cdot(\rho B(\rho))|\rho|^{p-2}\rho dx
&=\int_{\mathbb{T}^d}\nabla\cdot(\rho \nabla K\ast\rho)\rho^{p-1} dx\\
&=\frac{p-1}{p}\int_{\mathbb{T}^d}\Delta K\ast \rho \rho^{p}dx.
\end{aligned}
\end{equation}
Combining  (\ref{eq:4.22}), (\ref{eq:4.23}) and (\ref{eq:4.24}), one has
\begin{equation}\label{eq:4.25}
\frac{1}{p}\frac{d}{d t}\|\rho\|^p_{L^p}+\int_{\mathbb{T}^d}(-\Delta)^{\frac{\alpha}{2}}\rho\rho^{p-1}dx
+\frac{p-1}{p}\int_{\mathbb{T}^d}\Delta K\ast \rho \rho^{p}dx=0.
\end{equation}
Next, we consider the  $(-\Delta)^{\frac{\alpha}{2}}\rho\rho^{p-1}(x)$ and $\frac{p-1}{p}\Delta K\ast \rho \rho^{p}(x)$. According to the definition of fractional Laplacian in (\ref{eq:2.1}), the $(-\Delta)^{\frac{\alpha}{2}}\rho\rho^{p-1}(x)$ can be written as
\begin{equation}\label{eq:4.26}
\begin{aligned}
(-\Delta)^{\frac{\alpha}{2}}\rho\rho^{p-1}(x)&=\frac{1}{2}(-\Delta)^{\frac{\alpha}{2}}\rho\rho^{p-1}(x)
+\frac{1}{2}C_{\alpha,d}\rho^{p-1}(x)\sum_{k\in \mathbb{Z}^d}\int_{\mathbb{T}^d}\frac{\rho(x)-\rho(y)}{|x-y+k|^{d+\alpha}}dy\\
&=\frac{1}{2}(-\Delta)^{\frac{\alpha}{2}}\rho\rho^{p-1}(x)
+\frac{1}{2}C_{\alpha,d}\rho^{p-1}(x)\sum_{k\in \mathbb{Z}^d\setminus\{0\}}\int_{\mathbb{T}^d}\frac{\rho(x)-\rho(y)}{|x-y+k|^{d+\alpha}}dy\\
&+\frac{1}{2}C_{\alpha,d}\rho^{p-1}(x)\int_{\mathbb{T}^d\backslash \Omega_3}\frac{\rho(x)-\rho(y)}{|x-y|^{d+\alpha}}dy\\
&+\frac{1}{2}C_{\alpha,d}\rho^{p-1}(x)\int_{\Omega_3}\frac{\rho(x)-\rho(y)}{|x-y|^{d+\alpha}}dy,
\end{aligned}
\end{equation}
and for the $\frac{p-1}{p}\Delta K\ast \rho \rho^{p}(x)$, we deduce by the definition of $K$ in Section 1 that
\begin{equation}\label{eq:4.27}
\begin{aligned}
\frac{p-1}{p}\Delta K\ast \rho \rho^{p}(x)&=\frac{p-1}{p} \rho^{p}(x)\int_{\mathbb{T}^d}\Delta K(x-y)\rho(y)dy\\
&=-\frac{p-1}{p} \rho^{p}(x)\int_{\mathbb{T}^d}\Delta K(x-y)(\rho(x)-\rho(y))dy\\
&=-\frac{p-1}{p} \rho^{p}(x)\int_{\mathbb{T}^d\setminus\Omega_3}\Delta K(x-y)(\rho(x)-\rho(y))dy\\
 &\ \ \ \ \ -\frac{p-1}{p} \rho^{p}(x)\int_{\Omega_3}\Delta K(x-y)(\rho(x)-\rho(y))dy\\
&=-\frac{p-1}{p} \rho^{p}(x)\int_{\mathbb{T}^d\setminus\Omega_3}\Delta K(x-y)(\rho(x)-\rho(y))dy\\
&  \ \ \ -\frac{p-1}{p}C_{\beta,d} \rho^{p}(x)\int_{\Omega_3} \frac{\rho(x)-\rho(y)}{|x-y|^{\beta}}dy,
\end{aligned}
\end{equation}
where $C_{\alpha,d}$ is defined in (\ref{eq:2.2}), $C_{\beta,d}$ is defined in (\ref{eq:3.12}). And the domain $\Omega_3=\Omega(x)$ depends on $x$, which is defined by
\begin{equation}\label{eq:4.28}
\Omega_3=\Omega(x)=\{y||x-y|\leq \epsilon_3\},\ \ \ 0<\epsilon_3\leq\frac{1}{4}.
\end{equation}
In this paper, we denote
\begin{equation}\label{eq:4.29}
\epsilon_3=\min \left\{ \frac{1}{4},\big( \frac{pC_{\alpha,d}}{4(p-1)C_{\beta,d}\rho(\overline{x})}\big)^{\frac{1}{\alpha+d-\beta}} \right\},
\end{equation}
where $\rho(\overline{x})$ is defined in (\ref{eq:4.5}). Then for any $x\in \mathbb{T}^d$, when $y\in \Omega_3$, one has
\begin{equation}\label{eq:4.30}
0<|x-y|^{d+\alpha-\beta}\frac{p-1}{p}C_{\beta,d}\rho(x)\leq |x-y|^{d+\alpha-\beta}\frac{p-1}{p}C_{\beta,d}\rho(\overline{x})<\frac{C_{\alpha,d}}{4}.
\end{equation}
Combining the fourth term of the right-hand side of (\ref{eq:4.26}), the second term of the right-hand side of (\ref{eq:4.27}) and the definition of $\Omega_3$ in (\ref{eq:4.28}) and (\ref{eq:4.30}), one get
\begin{equation}\label{eq:4.31}
\begin{aligned}
&\frac{1}{2}C_{\alpha,d}\rho^{p-1}(x)\int_{\Omega_3}\frac{\rho(x)-\rho(y)}{|x-y|^{d+\alpha}}dy
 -\frac{p-1}{p}C_{\beta,d} \rho^{p}(x)\int_{\Omega_3} \frac{\rho(x)-\rho(y)}{|x-y|^{\beta}}dy\\
&=\frac{1}{2}C_{\alpha,d}\rho^{p-1}(x)\int_{\Omega_3}\frac{\rho(x)-\rho(y)}{|x-y|^{d+\alpha}}dy\\
&\ \  -\frac{p-1}{p}C_{\beta,d} \rho^{p}(x)\int_{\Omega_3} \frac{(\rho(x)-\rho(y))|x-y|^{\alpha+d-\beta}}{|x-y|^{d+\alpha}}dy\\
&\sim C^\ast\rho^{p-1}(x)\int_{\Omega_3}\frac{\rho(x)-\rho(y)}{|x-y|^{d+\alpha}}dy,
\end{aligned}
\end{equation}
where
$$
\frac{C_{\alpha,d}}{4}<C^\ast<\frac{C_{\alpha,d}}{2}.
$$
Consider the the third term of the right-hand side of (\ref{eq:4.26}) and (\ref{eq:4.31}), we obtain
\begin{equation}\label{eq:4.32}
\begin{aligned}
&\frac{1}{2}C_{\alpha,d}\rho^{p-1}(x)\int_{\mathbb{T}^d\setminus\Omega_3}\frac{\rho(x)-\rho(y)}{|x-y|^{d+\alpha}}dy
+C^\ast\rho^{p-1}(x)\int_{\Omega_3}\frac{\rho(x)-\rho(y)}{|x-y|^{d+\alpha}}dy\\
&=C^\ast\rho^{p-1}(x)\int_{\mathbb{T}^d}\frac{\rho(x)-\rho(y)}{|x-y|^{d+\alpha}}dy
+C_\ast\rho^{p-1}(x)\int_{\mathbb{T}^d\setminus\Omega_3}\frac{\rho(x)-\rho(y)}{|x-y|^{d+\alpha}}dy,
\end{aligned}
\end{equation}
where
$$
0<C_\ast<\frac{C_{\alpha,d}}{4}, \ \ \  C^\ast+C_\ast=\frac{C_{\alpha,d}}{2}.
$$
Combining (\ref{eq:4.26}), (\ref{eq:4.27}), (\ref{eq:4.31}) and (\ref{eq:4.32}), one has
\begin{equation}\label{eq:4.33}
\begin{aligned}
&(-\Delta)^{\frac{\alpha}{2}}\rho\rho^{p-1}+\frac{p-1}{p}\Delta K\ast \rho \rho^{p}\\
&\sim \frac{1}{2}(-\Delta)^{\frac{\alpha}{2}}\rho\rho^{p-1}+\frac{1}{2}C_{\alpha,d}\rho^{p-1}(x)\sum_{k\in \mathbb{Z}^d\setminus\{0\}}\int_{\mathbb{T}^d}\frac{\rho(x)-\rho(y)}{|x-y+k|^{d+\alpha}}dy\\
&\ \ \ + C^\ast\rho^{p-1}(x)\int_{\mathbb{T}^d}\frac{\rho(x)-\rho(y)}{|x-y|^{d+\alpha}}dy
+C_\ast\rho^{p-1}(x)\int_{\mathbb{T}^d\setminus\Omega_3}\frac{\rho(x)-\rho(y)}{|x-y|^{d+\alpha}}dy\\
&\ \ \ -\frac{p-1}{p} \rho^{p}(x)\int_{\mathbb{T}^d\setminus\Omega_3}\Delta K(x-y)(\rho(x)-\rho(y))dy.
\end{aligned}
\end{equation}
Next, we claim that the following inequality holds
\begin{equation}\label{eq:4.34}
\int_{\mathbb{T}^d}\rho^{p-1}(x)\sum_{k\in \mathbb{Z}^d\setminus\{0\}}\int_{\mathbb{T}^d}\frac{\rho(x)-\rho(y)}{|x-y+k|^{d+\alpha}}dydx\geq0.
\end{equation}
For any $k\in \mathbb{Z}^d$, one has
\begin{equation}\label{eq:4.35}
\begin{aligned}
\rho(x)\int_{\mathbb{T}^d}\frac{\rho(x)-\rho(y)}{|x-y+k|^{d+\alpha}}dy
&=\int_{\mathbb{T}^d}\frac{\rho^2(x)-\rho(y)\rho(x)}{|x-y+k|^{d+\alpha}}dy\\
&=\frac{1}{2}\int_{\mathbb{T}^d}\frac{\rho^2(x)-\rho^2(y)}{|x-y+k|^{d+\alpha}}dy
+\frac{1}{2}\int_{\mathbb{T}^d}\frac{(\rho(x)-\rho(y))^2}{|x-y+k|^{d+\alpha}}dy\\
&\geq \frac{1}{2}\int_{\mathbb{T}^d}\frac{\rho^2(x)-\rho^2(y)}{|x-y+k|^{d+\alpha}}dy.
\end{aligned}
\end{equation}
Since $p=2^n$, through discussion similar with (\ref{eq:4.35}), to obtain
\begin{equation}\label{eq:4.36}
\begin{aligned}
\rho^{p-1}(x)\int_{\mathbb{T}^d}\frac{\rho(x)-\rho(y)}{|x-y+k|^{d+\alpha}}dy
&\geq \frac{1}{2}\rho^{p-2}(x)\int_{\mathbb{T}^d}\frac{\rho^2(x)-\rho^2(y)}{|x-y+k|^{d+\alpha}}dy\\
&\geq \frac{1}{2^2}\rho^{p-4}(x)\int_{\mathbb{T}^d}\frac{\rho^4(x)-\rho^4(y)}{|x-y+k|^{d+\alpha}}dy\\
&\geq \frac{1}{2^{n-1}}\rho^{2^{n-1}}(x)\int_{\mathbb{T}^d}\frac{\rho^{2^{n-1}}(x)-\rho^{2^{n-1}}(y)}{|x-y+k|^{d+\alpha}}dy\\
&=\frac{2}{p}\rho^{\frac{p}{2}}(x)\int_{\mathbb{T}^d}
\frac{\rho^{\frac{p}{2}}(x)-\rho^{\frac{p}{2}}(y)}{|x-y+k|^{d+\alpha}}.
\end{aligned}
\end{equation}
Integrate on both sides of (\ref{eq:4.36}) over $\mathbb{T}^d$, one get
\begin{equation}\label{eq:4.37}
\int_{\mathbb{T}^d}\rho^{p-1}(x)\int_{\mathbb{T}^d}\frac{\rho(x)-\rho(y)}{|x-y+k|^{d+\alpha}}dydx
\geq\frac{2}{p}\int_{\mathbb{T}^d}\rho^{\frac{p}{2}}(x)\int_{\mathbb{T}^d}
\frac{\rho^{\frac{p}{2}}(x)-\rho^{\frac{p}{2}}(y)}{|x-y+k|^{d+\alpha}}dydx.
\end{equation}
The formula on the right-hand side of (\ref{eq:4.37}) can be written as
\begin{equation}\label{eq:4.38}
\begin{aligned}
\int_{\mathbb{T}^d}\rho^{\frac{p}{2}}(x)\int_{\mathbb{T}^d}
\frac{\rho^{\frac{p}{2}}(x)-\rho^{\frac{p}{2}}(y)}{|x-y+k|^{d+\alpha}}dydx
&=\frac{1}{2}\int_{\mathbb{T}^d}
\rho^{\frac{p}{2}}(x)\int_{\mathbb{T}^d}\frac{\rho^{\frac{p}{2}}(x)-\rho^{\frac{p}{2}}(y)}{|x-y+k|^{d+\alpha}}dydx\\
&\ \ \ -\frac{1}{2}\int_{\mathbb{T}^d}
\rho^{\frac{p}{2}}(x)\int_{\mathbb{T}^d}\frac{\rho^{\frac{p}{2}}(y)-\rho^{\frac{p}{2}}(x)}{|x-y+k|^{d+\alpha}}dydx\\
&=\frac{1}{2}\int_{\mathbb{T}^d}
\rho^{\frac{p}{2}}(x)\int_{\mathbb{T}^d}\frac{\rho^{\frac{p}{2}}(x)-\rho^{\frac{p}{2}}(y)}{|x-y+k|^{d+\alpha}}dydx\\
&\ \ \ -\frac{1}{2}\int_{\mathbb{T}^d}
\rho^{\frac{p}{2}}(x)\int_{\mathbb{T}^d}\frac{\rho^{\frac{p}{2}}(y)-\rho^{\frac{p}{2}}(x)}{|y-x+(-k)|^{d+\alpha}}dydx.
\end{aligned}
\end{equation}
And as $k\in \mathbb{Z}^d\setminus\{0\}$ is symmetrical, one get
\begin{equation}\label{eq:4.39}
\begin{aligned}
&\sum_{k\in \mathbb{Z}^d\setminus\{0\}}\int_{\mathbb{T}^d}
\rho^{\frac{p}{2}}(x)\int_{\mathbb{T}^d}\frac{\rho^{\frac{p}{2}}(y)-\rho^{\frac{p}{2}}(x)}{|y-x+(-k)|^{d+\alpha}}dydx\\
&=\sum_{k\in \mathbb{Z}^d\setminus\{0\}}\int_{\mathbb{T}^d}
\rho^{\frac{p}{2}}(x)\int_{\mathbb{T}^d}\frac{\rho^{\frac{p}{2}}(y)-\rho^{\frac{p}{2}}(x)}{|y-x+k|^{d+\alpha}}dydx\\
&=\sum_{k\in \mathbb{Z}^d\setminus\{0\}}\int_{\mathbb{T}^d}
\rho^{\frac{p}{2}}(y)\int_{\mathbb{T}^d}\frac{\rho^{\frac{p}{2}}(x)-\rho^{\frac{p}{2}}(y)}{|x-y+k|^{d+\alpha}}dxdy.
\end{aligned}
\end{equation}
Then, we deduce by (\ref{eq:4.37}), (\ref{eq:4.38}) and (\ref{eq:4.39}) that
\begin{equation}\label{eq:4.40}
\begin{aligned}
&\int_{\mathbb{T}^d}\rho^{p-1}(x)\sum_{k\in \mathbb{Z}^d\setminus\{0\}}\int_{\mathbb{T}^d}\frac{\rho(x)-\rho(y)}{|x-y+k|^{d+\alpha}}dydx\\
&=\sum_{k\in \mathbb{Z}^d\setminus\{0\}}\int_{\mathbb{T}^d}\rho^{p-1}(x)
\int_{\mathbb{T}^d}\frac{\rho(x)-\rho(y)}{|x-y+k|^{d+\alpha}}dydx\\
&\geq \frac{2}{p} \sum_{k\in \mathbb{Z}^d\setminus\{0\}}\int_{\mathbb{T}^d}\rho^{\frac{p}{2}}(x)\int_{\mathbb{T}^d}
\frac{\rho^{\frac{p}{2}}(x)-\rho^{\frac{p}{2}}(y)}{|x-y+k|^{d+\alpha}}dydx\\
&=\frac{1}{p}\sum_{k\in \mathbb{Z}^d\setminus\{0\}}\int_{\mathbb{T}^d}
\rho^{\frac{p}{2}}(x)\int_{\mathbb{T}^d}\frac{\rho^{\frac{p}{2}}(x)-\rho^{\frac{p}{2}}(y)}{|x-y+k|^{d+\alpha}}dydx\\
&\ \ \ -\frac{1}{p}\sum_{k\in \mathbb{Z}^d\setminus\{0\}}\int_{\mathbb{T}^d}
\rho^{\frac{p}{2}}(x)\int_{\mathbb{T}^d}\frac{\rho^{\frac{p}{2}}(y)-\rho^{\frac{p}{2}}(x)}{|y-x+(-k)|^{d+\alpha}}dydx\\
&=\frac{1}{p}\sum_{k\in \mathbb{Z}^d\setminus\{0\}}\int_{\mathbb{T}^d}
\rho^{\frac{p}{2}}(x)\int_{\mathbb{T}^d}\frac{\rho^{\frac{p}{2}}(x)-\rho^{\frac{p}{2}}(y)}{|x-y+k|^{d+\alpha}}dydx\\
&\ \ \ -\frac{1}{p}\sum_{k\in \mathbb{Z}^d\setminus\{0\}}\int_{\mathbb{T}^d}
\rho^{\frac{p}{2}}(y)\int_{\mathbb{T}^d}\frac{\rho^{\frac{p}{2}}(x)-\rho^{\frac{p}{2}}(y)}{|x-y+k|^{d+\alpha}}dxdy\\
&=\frac{1}{p}\sum_{k\in \mathbb{Z}^d\setminus\{0\}}
\left( \int_{\mathbb{T}^d}\int_{\mathbb{T}^d}
\frac{\rho^{\frac{p}{2}}(x)-\rho^{\frac{p}{2}}(y)}{|x-y+k|^{d+\alpha}}(\rho^{\frac{p}{2}}(x)-\rho^{\frac{p}{2}}(y))dxdy \right)\\
&=\frac{1}{p}\sum_{k\in \mathbb{Z}^d\setminus\{0\}}
\left( \int_{\mathbb{T}^d}\int_{\mathbb{T}^d}
\frac{(\rho^{\frac{p}{2}}(x)-\rho^{\frac{p}{2}}(y))^2}{|x-y+k|^{d+\alpha}}dxdy \right)\geq0,
\end{aligned}
\end{equation}
so we finish the proof of (\ref{eq:4.34}). For the second term of the right-hand side of (\ref{eq:4.33}), we deduce by (\ref{eq:4.40}) that integrate over $\mathbb{T}^d$, one get
\begin{equation}\label{eq:4.41}
\frac{1}{2}\int_{\mathbb{T}^d}C_{\alpha,d}\rho^{p-1}(x)\sum_{k\in \mathbb{Z}^d\setminus\{0\}}\int_{\mathbb{T}^d}\frac{\rho(x)-\rho(y)}{|x-y+k|^{d+\alpha}}dydx\geq0.
\end{equation}
By a discussion similar to (\ref{eq:4.41}), for the third term of the right-hand side of (\ref{eq:4.33}), integrate over $\mathbb{T}^d$, one has
\begin{equation}\label{eq:4.42}
\int_{\mathbb{T}^d} C^\ast\rho^{p-1}(x)\int_{\mathbb{T}^d}\frac{\rho(x)-\rho(y)}{|x-y|^{d+\alpha}}dydx\geq0.
\end{equation}
The fourth term of the right-hand side of (\ref{eq:4.33}) can be expressed as
\begin{equation}\label{eq:4.43}
\begin{aligned}
C_\ast\rho^{p-1}(x)\int_{\mathbb{T}^d\setminus\Omega_3}\frac{\rho(x)-\rho(y)}{|x-y|^{d+\alpha}}dy
&=C_\ast\rho^{p-1}(x)\int_{\mathbb{T}^d\setminus\Omega_3}\frac{\rho(x)}{|x-y|^{d+\alpha}}dy\\
&\ \ \ -C_\ast\rho^{p-1}(x)\int_{\mathbb{T}^d\setminus\Omega_3}\frac{\rho(y)}{|x-y|^{d+\alpha}}dy.
\end{aligned}
\end{equation}
As $\rho(x)\geq 0$, for the first term of the right-hand side of (\ref{eq:4.43}), one has
\begin{equation}\label{eq:4.44}
C_\ast\rho^{p-1}(x)\int_{\mathbb{T}^d\setminus\Omega_3}\frac{\rho(x)}{|x-y|^{d+\alpha}}dy\geq 0,
\end{equation}
and we deduce by the definition of $\Omega_3$ in (\ref{eq:4.28}) and (\ref{eq:4.29}) that
\begin{equation}\label{eq:4.45}
\begin{aligned}
C_\ast\rho^{p-1}(x)\int_{\mathbb{T}^d\setminus\Omega_3}\frac{\rho(y)}{|x-y|^{d+\alpha}}dy
&\leq C(1+\rho(\overline{x})^{\frac{d+\alpha}{\alpha+d-\beta}})\rho^{p-1}(x)\int_{\mathbb{T}^d\setminus\Omega_3}\rho(y)dy\\
&\leq C(1+\rho(\overline{x})^{\frac{d+\alpha}{\alpha+d-\beta}}) |\mathbb{T}^d|^{\frac{p-1}{p}}\|\rho\|_{L^p}\rho^{p-1}(x).
\end{aligned}
\end{equation}
Combining  (\ref{eq:4.44}), (\ref{eq:4.45}) and H\"{o}lder's inequality, integrate both sides of (\ref{eq:4.43}) over $\mathbb{T}^d$ to obtain
\begin{equation}\label{eq:4.46}
\begin{aligned}
-&\int_{\mathbb{T}^d}C_\ast\rho^{p-1}(x)\int_{\mathbb{T}^d\setminus\Omega_3}\frac{\rho(x)-\rho(y)}{|x-y|^{d+\alpha}}dydx\\
&\leq C(1+\rho(\overline{x})^{\frac{d+\alpha}{\alpha+d-\beta}}) |\mathbb{T}^d|^{\frac{p-1}{p}}\|\rho\|_{L^p}\int_{\mathbb{T}^d}\rho^{p-1}(x)dx \\
&\leq C(1+\rho(\overline{x})^{\frac{d+\alpha}{\alpha+d-\beta}})|\mathbb{T}^d|\|\rho\|^p_{L^p}.
\end{aligned}
\end{equation}
According to the definition of $\Omega_3$ in (\ref{eq:4.28}) and (\ref{eq:4.29}), the fifth term of the right-hand side of (\ref{eq:4.33}) can be estimated as
\begin{equation}\label{eq:4.47}
\left|\frac{p-1}{p} \rho^{p}(x)\int_{\mathbb{T}^d\setminus\Omega_3}\Delta K(x-y)(\rho(x)-\rho(y))dy\right|\leq C(\rho(\overline{x})+\rho(\overline{x})^{1+\frac{\beta-d}{\alpha+d-\beta}})\rho^{p}(x).
\end{equation}
Integrate the fifth term of the right-hand side of (\ref{eq:4.33}) over $\mathbb{T}^d$, we deduce by (\ref{eq:4.47}) and H\"{o}lder's inequality, one has
\begin{equation}\label{eq:4.48}
\begin{aligned}
& \int_{\mathbb{T}^d}\frac{p-1}{p}\rho^{p}(x)\int_{\mathbb{T}^d\setminus\Omega_3}\Delta K(x-y)(\rho(x)-\rho(y))dydx\\
&\leq C(\rho(\overline{x})+\rho(\overline{x})^{1+\frac{\beta-d}{\alpha+d-\beta}}) \int_{\mathbb{T}^d}\rho^{p}(x)dx\\
&\leq C(\rho(\overline{x})+\rho(\overline{x})^{1+\frac{\beta-d}{\alpha+d-\beta}})\|\rho\|^{p}_{L^p}.
\end{aligned}
\end{equation}
Notice that $\rho(\overline{x})$ is defined in (\ref{eq:4.5}), the constant $C$ in (\ref{eq:4.46}) and (\ref{eq:4.48}) is dependent of $p,\alpha, \beta$ and $d$. Integrate the first term of the right-hand side of (\ref{eq:4.33}) over $\mathbb{T}^d$, and apply Lemma \ref{lem:2.1}, to obtain
\begin{equation}\label{eq:4.49}
\frac{1}{2}\int_{\mathbb{T}^d}(-\Delta)^{\frac{\alpha}{2}}\rho \rho^{p-1} dx\geq\frac{1}{p}\|\rho^{\frac{p}{2}}\|^2_{\dot{H}^{\frac{\alpha}{2}}}.
\end{equation}
Combining (\ref{eq:4.33}), (\ref{eq:4.41}), (\ref{eq:4.42}), (\ref{eq:4.46}), (\ref{eq:4.48}) and (\ref{eq:4.49}), the second term and third term of left-hand side of (\ref{eq:4.25}) can be estimated as
\begin{equation}\label{eq:4.50}
\begin{aligned}
&-\int_{\mathbb{T}^d}(-\Delta)^{\frac{\alpha}{2}}\rho\rho^{p-1}dx-\frac{p-1}{p}\int_{\mathbb{T}^d}\Delta K\ast \rho \rho^{p}dx\\
&\leq -\frac{1}{p}\|\rho^{\frac{p}{2}}\|^2_{\dot{H}^{\frac{\alpha}{2}}}+ C\left((1+\rho(\overline{x})^{\frac{d+\alpha}{\alpha+d-\beta}})+(\rho(\overline{x})
+\rho(\overline{x})^{1+\frac{\beta-d}{\alpha+d-\beta}})\right)\|\rho\|^{p}_{L^p}.
\end{aligned}
\end{equation}
By Corollary \ref{cor:4.2}, (\ref{eq:4.25}) and (\ref{eq:4.50}), we deduce that for any $0\leq t\leq \tau_0$, one get
\begin{equation}\label{eq:4.51}
\begin{aligned}
\frac{d}{dt}\|\rho\|^{p}_{L^p}
&\leq -\|\rho^{\frac{p}{2}}\|^2_{\dot{H}^{\frac{\alpha}{2}}}+ pC\left((1+\rho(\overline{x})^{\frac{d+\alpha}{\alpha+d-\beta}})+(\rho(\overline{x})
+\rho(\overline{x})^{1+\frac{\beta-d}{\alpha+d-\beta}})\right)\|\rho\|^{p}_{L^p}\\
&\leq -\|\rho^{\frac{p}{2}}\|^2_{\dot{H}^{\frac{\alpha}{2}}}+
 pC\left((1+(2C_\infty)^{\frac{d+\alpha}{\alpha+d-\beta}})+(2C_\infty
+(2C_\infty)^{1+\frac{\beta-d}{\alpha+d-\beta}})\right)\|\rho\|^{p}_{L^p},
\end{aligned}
\end{equation}
where $\tau_0$ is defined in (\ref{eq:4.20}). If denote
\begin{equation}\label{eq:4.52}
C_1=pC\left((1+(2C_\infty)^{\frac{d+\alpha}{\alpha+d-\beta}})+(2C_\infty
+(2C_\infty)^{1+\frac{\beta-d}{\alpha+d-\beta}})\right).
\end{equation}
Then the (\ref{eq:4.51}) can be written as
\begin{equation}\label{eq:4.53}
\frac{d}{dt}\|\rho\|^{p}_{L^p}\leq-\|\rho^{\frac{p}{2}}\|^2_{\dot{H}^{\frac{\alpha}{2}}}+C_1\|\rho\|^{p}_{L^p}.
\end{equation}
Define
\begin{equation}\label{eq:4.54}
\tau'_0=\min\left\{ \frac{p\ln 2}{C_1},\tau_0\right\}.
\end{equation}
Since $\|\rho_0-\overline{\rho}\|_{L^p}\leq D_0$, then $\|\rho_0\|_{L^p}\leq D_0+\overline{\rho}$. We deduce by solving  the differential inequality in (\ref{eq:4.53}) that for any $0\leq t\leq \tau'_0$, one has
$$
\|\rho(t,\cdot)\|_{L^p}\leq 2(D_0+\overline{\rho}).
$$
This completes the proof of Lemma \ref{lem:4.3}.
\end{proof}

\vskip .1in

For the local $L^2$ estimate of solution to Equation (\ref{eq:1.1}), we obtain the  $L^2$ estimate of solution is independent of $A$. In the proof, we avoid using some energy inequalities, and the main idea of proof is similar with  Lemma \ref{lem:4.3}. However, the $\rho-\overline{\rho}$ is not always a positive, the details of proof can have some different.

\vskip .1in

\begin{lemma}\label{lem:4.4}
Let $\beta-d<\alpha<2, \beta\in(d,d+1),d>2$, $\rho(t,x)$ is the solution of Equation (\ref{eq:1.1}) with initial data $\rho_0(x)$. Suppose that $\rho_0(x)$ satisfies (\ref{eq:4.14})-(\ref{eq:4.16}). Then there exist a time $\tau_1>0$, for any $0\leq t \leq \tau_1$, we have
$$
\|\rho(t,\cdot)-\overline{\rho}\|_{L^2}\leq 2(B_0^2-\overline{\rho}^2)^{\frac{1}{2}}.
$$
\end{lemma}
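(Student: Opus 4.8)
The plan is to follow the proof of Lemma \ref{lem:4.3} with the exponent $p=2$ — noting that the restriction $p>\frac{d}{\alpha+d-\beta}$ plays no role in that argument — and then to convert the resulting $L^2$ bound into a bound on $\|\rho-\overline{\rho}\|_{L^2}$ via conservation of the spatial mean. First I would record that the mean of $\rho$ is conserved: integrating Equation (\ref{eq:1.1}) over $\mathbb{T}^d$, the advection term vanishes by incompressibility, $\int_{\mathbb{T}^d}(-\Delta)^{\frac{\alpha}{2}}\rho\,dx=0$ by Lemma \ref{lem:2.2}, and $\int_{\mathbb{T}^d}\nabla\cdot(\rho B(\rho))\,dx=0$ as a total divergence on the torus; hence $\frac{1}{|\mathbb{T}^d|}\int_{\mathbb{T}^d}\rho(t,x)\,dx\equiv\overline{\rho}$, and therefore $\|\rho(t,\cdot)-\overline{\rho}\|_{L^2}^2=\|\rho(t,\cdot)\|_{L^2}^2-\overline{\rho}^2$. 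So it is enough to control $\|\rho(t,\cdot)\|_{L^2}$.

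Multiplying Equation (\ref{eq:1.1}) by $\rho$ (which is nonnegative by Proposition \ref{prop:3.1}, so this is the $p=2$ instance of the weight $|\rho|^{p-2}\rho$) and integrating over $\mathbb{T}^d$, the advection term drops by incompressibility and one is left with
\[
\frac{1}{2}\frac{d}{dt}\|\rho\|_{L^2}^2+\int_{\mathbb{T}^d}(-\Delta)^{\frac{\alpha}{2}}\rho\,\rho\,dx+\frac{1}{2}\int_{\mathbb{T}^d}\Delta K\ast\rho\,\rho^2\,dx=0,
\]
which is (\ref{eq:4.25}) with $p=2$. The only genuine difficulty, exactly as in Lemma \ref{lem:4.3}, is that $\Delta K\notin L^1$ near the origin, so the last term cannot be controlled by $\|\Delta K\|_{L^1}\|\rho\|_{L^\infty}\|\rho\|_{L^2}^2$. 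I would dispatch it with the pointwise decomposition of (\ref{eq:4.26})--(\ref{eq:4.50}): split the kernel integral defining $(-\Delta)^{\frac{\alpha}{2}}\rho$ into the near-origin piece on $\Omega_3$ (with $\epsilon_3$ as in (\ref{eq:4.29})) and its complement; absorb the singular near-origin part of $\Delta K\ast\rho\,\rho^2$ into a fixed fraction of the near-origin dissipation — this is where $\alpha>\beta-d$ enters, forcing $|x-y|^{\alpha+d-\beta}\to0$ and fixing the sign in (\ref{eq:4.31}); bound the far part of $\Delta K\ast\rho$ using that $K$ is smooth away from the origin; apply Lemma \ref{lem:2.1} (with $p=2$) and the symmetrization (\ref{eq:4.40}) to the remaining genuinely dissipative terms; and estimate the residual lower-order terms by H\"older's inequality. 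Since $\rho\ge0$, every step of (\ref{eq:4.26})--(\ref{eq:4.50}) goes through for $p=2$. Using Corollary \ref{cor:4.2} (so $\|\rho(t,\cdot)\|_{L^\infty}\le2C_\infty$ on $[0,\tau_0]$), this produces, for $0\le t\le\tau_0$,
\[
\frac{d}{dt}\|\rho\|_{L^2}^2\le-\|\rho\|_{\dot{H}^{\frac{\alpha}{2}}}^2+C_2\|\rho\|_{L^2}^2\le C_2\|\rho\|_{L^2}^2,
\]
where $C_2=C_2(C_\infty,\alpha,\beta,d)$ is the constant $C_1$ of (\ref{eq:4.52}) specialized to $p=2$.

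Finally I would close by Gronwall's inequality. Since $\|\rho_0\|_{L^2}\le B_0$, the differential inequality gives $\|\rho(t,\cdot)\|_{L^2}^2\le B_0^2e^{C_2 t}$, hence $\|\rho(t,\cdot)-\overline{\rho}\|_{L^2}^2\le B_0^2e^{C_2 t}-\overline{\rho}^2$ on $[0,\tau_0]$. If $\overline{\rho}^2<B_0^2$, choosing
\[
\tau_1=\min\Big\{\tau_0,\ \frac{1}{C_2}\ln\big(4-3\overline{\rho}^2B_0^{-2}\big)\Big\}>0
\]
makes the right-hand side at most $4(B_0^2-\overline{\rho}^2)$ on $[0,\tau_1]$, which is the assertion. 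If instead $\overline{\rho}^2=B_0^2$, then the equality case of Cauchy--Schwarz forces $\rho_0\equiv\overline{\rho}$; since constants solve Equation (\ref{eq:1.1}) ($(-\Delta)^{\frac{\alpha}{2}}$ annihilates constants and $\Delta K\ast 1=0$ on $\mathbb{T}^d$), uniqueness in Proposition \ref{prop:3.1} gives $\rho(t,\cdot)\equiv\overline{\rho}$ and the bound is trivial. The part I expect to demand the most care is the near-origin absorption of the strong singularity of $\Delta K$ by the fractional dissipation; but this is precisely the mechanism already installed in the proof of Lemma \ref{lem:4.3}, so the real obstacle has effectively been overcome there, and the only new ingredient here is the mean-conservation identity that trades $\|\rho\|_{L^2}$ for $\|\rho-\overline{\rho}\|_{L^2}$.
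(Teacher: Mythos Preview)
Your approach is correct and genuinely different from the paper's. The paper multiplies Equation~(\ref{eq:1.1}) by $\rho-\overline{\rho}$ (rather than $\rho$) and integrates, which forces it to redo the entire near-origin absorption argument with the sign-indefinite weight $\rho-\overline{\rho}$; this is why the author singles out Lemma~\ref{lem:4.4} as requiring ``some different'' details (Remark~\ref{rem:14}(2)). Your route is more economical: you observe that $p=2$ is a legal instance of the machinery already built in Lemma~\ref{lem:4.3} (neither the constraint $p>\frac{d}{\alpha+d-\beta}$ nor anything beyond $p=2^n$ with $n\ge1$ is used in (\ref{eq:4.26})--(\ref{eq:4.53})), and you trade the resulting bound on $\|\rho\|_{L^2}^2$ for one on $\|\rho-\overline{\rho}\|_{L^2}^2$ via the mean-conservation identity $\|\rho-\overline{\rho}\|_{L^2}^2=\|\rho\|_{L^2}^2-\overline{\rho}^2$ (valid since $|\mathbb{T}^d|=1$). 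Your handling of the borderline case $\overline{\rho}^2=B_0^2$ is also clean.

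One caveat worth flagging: the paper's route yields the sharper differential inequality
\[
\frac{d}{dt}\|\rho-\overline{\rho}\|_{L^2}^2\le -\|\rho\|_{\dot{H}^{\alpha/2}}^2+C_2\|\rho-\overline{\rho}\|_{L^2}^2,
\]
which is (\ref{eq:4.85}) and is \emph{reused verbatim} in (\ref{eq:5.39})--(\ref{eq:5.41}) to drive $\|\rho-\overline{\rho}\|_{L^2}$ down to $B_1$ via $\Phi(A)$. Your inequality carries an extra additive constant $C_2\overline{\rho}^2$ on the right (since $\|\rho\|_{L^2}^2=\|\rho-\overline{\rho}\|_{L^2}^2+\overline{\rho}^2$), so while it proves the stated Lemma~\ref{lem:4.4} without difficulty, it would require a small adjustment downstream: the Gronwall step in (\ref{eq:5.40}) would pick up a forcing term of order $C_2\overline{\rho}^2/\Phi(A)$, which still tends to $0$ as $A\to\infty$ by Lemma~\ref{lem:5.2} and so does not obstruct (\ref{eq:5.41}). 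In short, your proof of Lemma~\ref{lem:4.4} is valid and simpler; the paper's more laborious version exists because it doubles as the engine for Section~5.
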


\vskip .05in

\begin{proof}
Let us multiply both sides of Equation (\ref{eq:1.1}) by $\rho-\overline{\rho}$ and integrate over $\mathbb{T}^d$, to obtain
\begin{equation}\label{eq:4.55}
\begin{aligned}
\frac{1}{2}\frac{d}{dt}\|\rho-\overline{\rho}\|_{L^2}^2+A\int_{\mathbb{T}^d}u\cdot\nabla \rho(\rho-\overline{\rho})dx&+\int_{\mathbb{T}^d}(-\Delta)^{\frac{\alpha}{2}}\rho(\rho-\overline{\rho})dx\\
&+\int_{\mathbb{T}^d}\nabla\cdot(\rho B(\rho))(\rho-\overline{\rho})dx=0.
\end{aligned}
\end{equation}
As the incompressibility of $u$, the second term of the left-hand side of (\ref{eq:4.55}) can be estimated as
\begin{equation}\label{eq:4.56}
A\int_{\mathbb{T}^d}u\cdot\nabla \rho(\rho-\overline{\rho})dx=0.
\end{equation}
Integrating by parts the fourth term of the left-hand side of (\ref{eq:4.55}), one get
\begin{equation}\label{eq:4.57}
\begin{aligned}
\int_{\mathbb{T}^d} \nabla(\rho B(\rho))(\rho-\overline{\rho})dx&=\int_{\mathbb{T}^d} \nabla\rho\cdot B(\rho)(\rho-\overline{\rho})dx+\int_{\mathbb{T}^d} \rho \nabla\cdot B(\rho)(\rho-\overline{\rho})dx\\
&=-\frac{1}{2}\int_{\mathbb{T}^d}(\rho-\overline{\rho})^2\Delta K\ast \rho dx
+\int_{\mathbb{T}^d}\rho(\rho-\overline{\rho})\Delta K\ast \rho dx.
\end{aligned}
\end{equation}
Combining (\ref{eq:4.55}), (\ref{eq:4.56}) and (\ref{eq:4.57}), to obtain
\begin{equation}\label{eq:4.58}
\begin{aligned}
\frac{1}{2}\frac{d}{d t}\|\rho-\overline{\rho}\|^2_{L^2}+\int_{\mathbb{T}^d}(-\Delta)^{\frac{\alpha}{2}}\rho(\rho-\overline{\rho}) dx
&-\frac{1}{2}\int_{\mathbb{T}^d}(\rho-\overline{\rho})^2\Delta K\ast \rho dx\\
&+\int_{\mathbb{T}^d}\rho(\rho-\overline{\rho})\Delta K\ast \rho dx=0.
\end{aligned}
\end{equation}
Next, we consider $(-\Delta)^{\frac{\alpha}{2}}\rho(\rho-\overline{\rho})$ and $-\frac{1}{2}(\rho-\overline{\rho})^2\Delta K\ast \rho+\rho(\rho-\overline{\rho})\Delta K\ast \rho$. According to the definition of fractional Laplacian in (\ref{eq:2.1}), the $(-\Delta)^{\frac{\alpha}{2}}\rho(\rho-\overline{\rho})$ can be written as
\begin{equation}\label{eq:4.59}
\begin{aligned}
(-\Delta)^{\frac{\alpha}{2}}\rho(\rho-\overline{\rho})
&=\frac{1}{2}(-\Delta)^{\frac{\alpha}{2}}\rho(\rho-\overline{\rho})\\
&\ \ \ +\frac{1}{2}C_{\alpha,d}(\rho-\overline{\rho})(x)\sum_{k\in \mathbb{Z}^d\setminus\{0\}}\int_{\mathbb{T}^d}\frac{\rho(x)-\rho(y)}{|x-y+k|^{d+\alpha}}dy\\
&\ \ \ +\frac{1}{2}C_{\alpha,d}(\rho-\overline{\rho})(x)\int_{\mathbb{T}^d\setminus\Omega_4}\frac{\rho(x)-\rho(y)}{|x-y|^{d+\alpha}}dy\\
&\ \ \ +\frac{1}{2}C_{\alpha,d}(\rho-\overline{\rho})(x)\int_{\Omega_4}\frac{\rho(x)-\rho(y)}{|x-y|^{d+\alpha}}dy,
\end{aligned}
\end{equation}
and for the $-\frac{1}{2}(\rho-\overline{\rho})^2\Delta K\ast \rho+\rho(\rho-\overline{\rho})\Delta K\ast \rho$, we deduce by the definition of $K$ in Section 1 that
\begin{equation}\label{eq:4.60}
\begin{aligned}
-&\frac{1}{2}(\rho-\overline{\rho})^2\Delta K\ast \rho+\rho(\rho-\overline{\rho})\Delta K\ast \rho\\
=&\frac{1}{2} (\rho-\overline{\rho})^{2}(x)\int_{\mathbb{T}^d}\Delta K(x-y)(\rho(x)-\rho(y))dy\\
&-\rho(\rho-\overline{\rho})(x)\int_{\mathbb{T}^d}\Delta K(x-y)(\rho(x)-\rho(y))dy\\
=&\frac{1}{2} (\rho-\overline{\rho})^{2}(x)\int_{\mathbb{T}^d\setminus\Omega_4}\Delta K(x-y)(\rho(x)-\rho(y))dy\\
&+\frac{1}{2}C_{\beta,d} (\rho-\overline{\rho})^{2}(x)\int_{\Omega_4} \frac{\rho(x)-\rho(y)}{|x-y|^{\beta}}dy\\
&-\rho(\rho-\overline{\rho})(x)\int_{\mathbb{T}^d\setminus\Omega_4}\Delta K(x-y)(\rho(x)-\rho(y))dy\\
&-C_{\beta,d}\rho(\rho-\overline{\rho})(x)\int_{\Omega_4} \frac{\rho(x)-\rho(y)}{|x-y|^{\beta}}dy,
\end{aligned}
\end{equation}
where $C_{\alpha,d}$ is defined in (\ref{eq:2.2}), $C_{\beta,d}$ is defined in (\ref{eq:3.12}). And the domain $\Omega_4=\Omega(x)$ depends on $x$, which is defined by
\begin{equation}\label{eq:4.61}
\Omega_4=\Omega(x)=\{y||x-y|\leq \epsilon_4\},\ \ \ 0<\epsilon_4\leq\frac{1}{4}.
\end{equation}
In this paper, we denote
\begin{equation}\label{eq:4.62}
\epsilon_4=\min \left\{ \frac{1}{4},\big( \frac{C_{\alpha,d}}{2C_{\beta,d}(\rho(\overline{x})+\overline{\rho})}\big)^{\frac{1}{\alpha+d-\beta}} \right\},
\end{equation}
where $\rho(\overline{x})$ is defined in (\ref{eq:4.5}). Then for any $x\in \mathbb{T}^d$, when $y\in \Omega_4$, one has
\begin{equation}\label{eq:4.63}
0<\frac{1}{2}|x-y|^{d+\alpha-\beta}C_{\beta,d}(\rho(x)+\overline{\rho})\leq \frac{1}{2}|x-y|^{d+\alpha-\beta}C_{\beta,d}(\rho(\overline{x})+\overline{\rho})<\frac{C_{\alpha,d}}{4}.
\end{equation}
Combining (\ref{eq:4.61}), (\ref{eq:4.62}) and (\ref{eq:4.63}), the second term and fourth term of right-hand side of (\ref{eq:4.60}) can be written as
\begin{equation}\label{eq:4.64}
\begin{aligned}
&\frac{1}{2}C_{\beta,d} (\rho-\overline{\rho})^{2}(x)\int_{\Omega_4} \frac{\rho(x)-\rho(y)}{|x-y|^{\beta}}dy\\
&\ \ \ -C_{\beta,d}(\rho-\overline{\rho})\rho(x)\int_{\Omega_4} \frac{\rho(x)-\rho(y)}{|x-y|^{\beta}}dy\\
&=-\frac{1}{2}(\rho+\overline{\rho})C_{\beta,d}(\rho-\overline{\rho})\int_{\Omega_4} \frac{\rho(x)-\rho(y)}{|x-y|^{\beta}}dy\\
&=-\frac{1}{2}(\rho+\overline{\rho})C_{\beta,d}(\rho-\overline{\rho})\int_{\Omega_4} \frac{\rho(x)-\rho(y)|x-y|^{d+\alpha-\beta}}{|x-y|^{d+\alpha}}dy\\
&\sim -C^\ast_0(\rho-\overline{\rho})\int_{\Omega_4} \frac{\rho(x)-\rho(y)}{|x-y|^{d+\alpha}}dy,
\end{aligned}
\end{equation}
where
\begin{equation}\label{eq:4.65}
0<C^\ast_0<\frac{C_{\alpha,d}}{4}.
\end{equation}
Combining the third term and the fourth term of right-hand side of (\ref{eq:4.59}), the second term and fourth term of right-hand side of (\ref{eq:4.60}), (\ref{eq:4.64}) and (\ref{eq:4.65}), one has
\begin{equation}\label{eq:4.66}
\begin{aligned}
&\frac{1}{2}C_{\alpha,d}(\rho-\overline{\rho})(x)\int_{\mathbb{T}^d\setminus\Omega_4}\frac{\rho(x)-\rho(y)}{|x-y|^{d+\alpha}}dy
+\frac{1}{2}C_{\alpha,d}(\rho-\overline{\rho})(x)\int_{\Omega_4}\frac{\rho(x)-\rho(y)}{|x-y|^{d+\alpha}}dy\\
&\ \ \ +\frac{1}{2}C_{\beta,d} (\rho-\overline{\rho})^{2}(x)\int_{\Omega_4} \frac{\rho(x)-\rho(y)}{|x-y|^{\beta}}dy-C_{\beta,d}(\rho-\overline{\rho})\rho(x)\int_{\Omega_4} \frac{\rho(x)-\rho(y)}{|x-y|^{\beta}}dy\\
&\sim\frac{1}{2}C_{\alpha,d}(\rho-\overline{\rho})(x)\int_{\mathbb{T}^d\setminus\Omega_4}
\frac{\rho(x)-\rho(y)}{|x-y|^{d+\alpha}}dy+\frac{1}{2}
C_{\alpha,d}(\rho-\overline{\rho})(x)\int_{\Omega_4}\frac{\rho(x)-\rho(y)}{|x-y|^{d+\alpha}}dy\\
&\ \ \ -C^\ast_0(\rho-\overline{\rho})\int_{\Omega_4} \frac{\rho(x)-\rho(y)}{|x-y|^{d+\alpha}}dy\\
&\sim \frac{1}{2}C_{\alpha,d}(\rho-\overline{\rho})(x)\int_{\mathbb{T}^d\setminus\Omega_4}\frac{\rho(x)-\rho(y)}{|x-y|^{d+\alpha}}dy
+C^\ast_1(\rho-\overline{\rho})\int_{\Omega_4} \frac{\rho(x)-\rho(y)}{|x-y|^{d+\alpha}}dy\\
&=C_{1,\ast}(\rho-\overline{\rho})(x)\int_{\mathbb{T}^d\setminus\Omega_4}\frac{\rho(x)-\rho(y)}{|x-y|^{d+\alpha}}dy
+C^\ast_1(\rho-\overline{\rho})\int_{\mathbb{T}^d} \frac{\rho(x)-\rho(y)}{|x-y|^{d+\alpha}}dy,
\end{aligned}
\end{equation}
where
\begin{equation}\label{eq:4.67}
\frac{1}{4}C_{\alpha,d}<C^\ast_1<\frac{1}{2}C_{\alpha,d},\ \ \ \  C_{1,\ast}+C^\ast_1=\frac{1}{2}C_{\alpha,d}.
\end{equation}
Therefore, we deduce by (\ref{eq:4.59}), (\ref{eq:4.60}) and (\ref{eq:4.66}) that
\begin{equation}\label{eq:4.68}
\begin{aligned}
&(-\Delta)^{\frac{\alpha}{2}}\rho(\rho-\overline{\rho})-\frac{1}{2}(\rho-\overline{\rho})^2\Delta K\ast \rho+\rho(\rho-\overline{\rho})\Delta K\ast \rho\\
&\sim\frac{1}{2}(-\Delta)^{\frac{\alpha}{2}}\rho(\rho-\overline{\rho})
+\frac{1}{2}C_{\alpha,d}(\rho-\overline{\rho})(x)\sum_{k\in \mathbb{Z}^d\setminus\{0\}}\int_{\mathbb{T}^d}\frac{\rho(x)-\rho(y)}{|x-y+k|^{d+\alpha}}dy\\
&\ \ +\frac{1}{2} (\rho-\overline{\rho})^{2}(x)\int_{\mathbb{T}^d\setminus\Omega_4}\Delta K(x-y)(\rho(x)-\rho(y))dy\\
&\ \ -\rho(\rho-\overline{\rho})(x)\int_{\mathbb{T}^d\setminus\Omega_4}\Delta K(x-y)(\rho(x)-\rho(y))dy\\
&\ \ +C_{1,\ast}(\rho-\overline{\rho})(x)\int_{\mathbb{T}^d\setminus\Omega_4}\frac{\rho(x)-\rho(y)}{|x-y|^{d+\alpha}}dy
+C^\ast_1(\rho-\overline{\rho})\int_{\mathbb{T}^d} \frac{\rho(x)-\rho(y)}{|x-y|^{d+\alpha}}dy.
\end{aligned}
\end{equation}
By a discussion similar with (\ref{eq:4.41}), for the second term and the sixth term of right-hand side of (\ref{eq:4.68}) , integrate over $\mathbb{T}^d$, one get
\begin{equation}\label{eq:4.69}
\begin{aligned}
&\int_{\mathbb{T}^d}C_{\alpha,d}(\rho-\overline{\rho})(x)\sum_{k\in \mathbb{Z}^d\setminus\{0\}}\int_{\mathbb{T}^d}\frac{\rho(x)-\rho(y)}{|x-y+k|^{d+\alpha}}dydx\\
&=\int_{\mathbb{T}^d}C_{\alpha,d}(\rho-\overline{\rho})(x)\sum_{k\in \mathbb{Z}^d\setminus\{0\}}\int_{\mathbb{T}^d}\frac{(\rho(x)-\overline{\rho})-(\rho(y)-\overline{\rho})}
{|x-y+k|^{d+\alpha}}dydx\geq0,
\end{aligned}
\end{equation}
and
\begin{equation}\label{eq:4.70}
\int_{\mathbb{T}^d} C_1^\ast(\rho-\overline{\rho})(x)\int_{\mathbb{T}^d}\frac{\rho(x)-\rho(y)}{|x-y|^{d+\alpha}}dydx\geq0.
\end{equation}
According to the definitions of $K$ in Section 1, the domain $\Omega_4$ in (\ref{eq:4.61}) and the $\rho(\overline{x})$ in (\ref{eq:4.5}), to obtain
\begin{equation}\label{eq:4.71}
|\Delta K(x-y)|\leq C(1+\rho(\overline{x})^{\frac{\beta}{\alpha+d-\beta}}), \ \ \  y\in \mathbb{T}^d\backslash\Omega_4,
\end{equation}
and
\begin{equation}\label{eq:4.72}
\int_{\mathbb{T}^d\setminus\Omega_4}|\Delta K(x-y)|dy\leq C(1+\rho(\overline{x})^{\frac{\beta-d}{\alpha+d-\beta}}),
\end{equation}
where $C=C(\alpha,d,\beta,\overline{\rho})$. Then the third term of right-hand side in (\ref{eq:4.68}) can be estimated as
\begin{equation}\label{eq:4.73}
\begin{aligned}
&\left|\frac{1}{2} (\rho-\overline{\rho})^{2}(x)\int_{\mathbb{T}^d\setminus\Omega_4}\Delta K(x-y)(\rho(x)-\rho(y))dy\right|\\
&\leq \rho(\overline{x}) (\rho-\overline{\rho})^{2}(x)\int_{\mathbb{T}^d\setminus\Omega_4}|\Delta K(x-y)|dy\\
&\leq C(\rho(\overline{x})+\rho(\overline{x})^{1+\frac{\beta-d}{\alpha+d-\beta}})(\rho-\overline{\rho})^{2}(x).
\end{aligned}
\end{equation}
Integrate the third term of right-hand side in (\ref{eq:4.68}) over $\mathbb{T}^d$,  we deduced by (\ref{eq:4.73}) that
\begin{equation}\label{eq:4.74}
\begin{aligned}
-&\frac{1}{2}\int_{\mathbb{T}^d} (\rho-\overline{\rho})^{2}(x)\int_{\mathbb{T}^d\setminus\Omega_4}\Delta K(x-y)(\rho(x)-\rho(y))dydx\\
&\leq C(\rho(\overline{x})+\rho(\overline{x})^{1+\frac{\beta-d}{\alpha+d-\beta}})
\int_{\mathbb{T}^d}(\rho-\overline{\rho})^{2}(x)dx\\
&\leq C(\rho(\overline{x})+\rho(\overline{x})^{1+\frac{\beta-d}{\alpha+d-\beta}})\|\rho-\overline{\rho}\|^2_{L^2}.
\end{aligned}
\end{equation}
Combining (\ref{eq:4.71}), (\ref{eq:4.72}) and H\"{o}lder's inequality, for the fourth term of right-hand side of (\ref{eq:4.68}), one get
\begin{equation}\label{eq:4.75}
\begin{aligned}
&\left|(\rho-\overline{\rho})\rho(x)\int_{\mathbb{T}^d\setminus\Omega_4}\Delta K(x-y)(\rho(x)-\rho(y))dy\right|\\
&=\left|(\rho-\overline{\rho})\rho(x)\int_{\mathbb{T}^d\setminus\Omega_4}\Delta K(x-y)\left((\rho(x)-\overline{\rho})-(\rho(y)-\overline{\rho})\right)dy\right|\\
&\leq \rho(\overline{x})|(\rho-\overline{\rho})|^2\int_{\mathbb{T}^d\setminus\Omega_4}|\Delta K(x-y)|dy\\
&\ \  +\rho(\overline{x})|(\rho-\overline{\rho})|\int_{\mathbb{T}^d\setminus\Omega_4}|\Delta K(x-y)||(\rho(y)-\overline{\rho})|dy\\
&\leq C(\rho(\overline{x})+\rho(\overline{x})^{1+\frac{\beta-d}{\alpha+d-\beta}})|(\rho-\overline{\rho})|^2\\
&\ \ +C(\rho(\overline{x})+\rho(\overline{x})^{1+\frac{\beta-d}{\alpha+d-\beta}})|(\rho-\overline{\rho})|
\int_{\mathbb{T}^d\setminus\Omega_4}|(\rho(y)-\overline{\rho})|dy\\
&\leq C(\rho(\overline{x})+\rho(\overline{x})^{1+\frac{\beta-d}{\alpha+d-\beta}})|(\rho-\overline{\rho})|^2\\
&\ \ +C\sqrt{|\mathbb{T}^d|}(\rho(\overline{x})+\rho(\overline{x})^{1+\frac{\beta-d}{\alpha+d-\beta}})|(\rho-\overline{\rho})|
\|\rho-\overline{\rho})\|_{L^2}\\
&\leq  C(\rho(\overline{x})+\rho(\overline{x})^{1+\frac{\beta-d}{\alpha+d-\beta}})\left((\rho-\overline{\rho})|^2+|(\rho-\overline{\rho})|
\|\rho-\overline{\rho})\|_{L^2}\right).
\end{aligned}
\end{equation}
According to (\ref{eq:4.75}) and H\"{o}lder's inequality, integrate the fourth term of right-hand side in (\ref{eq:4.68}) over $\mathbb{T}^d$, one has
\begin{equation}\label{eq:4.76}
\begin{aligned}
&\int_{\mathbb{T}^d}(\rho-\overline{\rho})\rho(x)\int_{\mathbb{T}^d\setminus\Omega_4}\Delta K(x-y)(\rho(x)-\rho(y))dydx\\
&\leq C(\rho(\overline{x})+\rho(\overline{x})^{1+\frac{\beta-d}{\alpha+d-\beta}})
\int_{\mathbb{T}^d}\left((\rho-\overline{\rho})|^2+|(\rho-\overline{\rho})|
\|\rho-\overline{\rho})\|_{L^2}\right)dx\\
&\leq C(\rho(\overline{x})+\rho(\overline{x})^{1+\frac{\beta-d}{\alpha+d-\beta}})
(\|\rho-\overline{\rho}\|^2_{L^2}+\sqrt{|\mathbb{T}^d|}\|\rho-\overline{\rho}\|^2_{L^2})\\
&\leq C(\rho(\overline{x})+\rho(\overline{x})^{1+\frac{\beta-d}{\alpha+d-\beta}})
\|\rho-\overline{\rho}\|^2_{L^2}.
\end{aligned}
\end{equation}
The fifth term of right-hand side of (\ref{eq:4.68}) can be written as
\begin{equation}\label{eq:4.77}
\begin{aligned}
&C_{1,\ast}(\rho-\overline{\rho})(x)\int_{\mathbb{T}^d\setminus\Omega_4}\frac{\rho(x)-\rho(y)}{|x-y|^{d+\alpha}}dy\\
&=C_{1,\ast}(\rho-\overline{\rho})(x)\int_{\mathbb{T}^d\setminus\Omega_4}
\frac{(\rho(x)-\overline{\rho})-(\rho(y)-\overline{\rho})}{|x-y|^{d+\alpha}}dy\\
&= C_{1,\ast}(\rho-\overline{\rho})^2(x)\int_{\mathbb{T}^d\setminus\Omega_4}
\frac{1}{|x-y|^{d+\alpha}}dy\\
&\ \ -C_{1,\ast}\rho-\overline{\rho}(x)\int_{\mathbb{T}^d\setminus\Omega_4}
\frac{\rho(y)-\overline{\rho}}{|x-y|^{d+\alpha}}dy.
\end{aligned}
\end{equation}
Obviously, for the first term of the right-hand side of (\ref{eq:4.77}), one has
\begin{equation}\label{eq:4.78}
C_{1,\ast}(\rho-\overline{\rho})^2(x)\int_{\mathbb{T}^d\setminus\Omega_4}
\frac{1}{|x-y|^{d+\alpha}}dy\geq 0,
\end{equation}
and combining the definition of $\Omega_4$ in (\ref{eq:4.61}) and H\"{o}lder's inequality, the second term of the right-hand side of (\ref{eq:4.77}) can be estimated as
\begin{equation}\label{eq:4.79}
\begin{aligned}
&\left|C_{1,\ast}(\rho-\overline{\rho})(x)\int_{\mathbb{T}^d\setminus\Omega_4}
\frac{\rho(y)-\overline{\rho}}{|x-y|^{d+\alpha}}dy\right|\\
&\leq  C(1+\rho(\overline{x})^{\frac{\alpha+d}{\alpha+d-\beta}})
|(\rho-\overline{\rho})|(x)\int_{\mathbb{T}^d\setminus\Omega_4}|\rho-\overline{\rho}|dy\\
&\leq C\sqrt{|\mathbb{T}^d|}(1+\rho(\overline{x})^{\frac{\alpha+d}{\alpha+d-\beta}})
\|\rho-\overline{\rho})\|_{L^2}|(\rho-\overline{\rho})|(x).
\end{aligned}
\end{equation}
Through (\ref{eq:4.79}) and H\"{o}lder's inequality, integrate the fifth term of right-hand side in (\ref{eq:4.68}) over $\mathbb{T}^d$, to obtain
\begin{equation}\label{eq:4.80}
\begin{aligned}
-&\int_{\mathbb{T}^d}C_{1,\ast}(\rho-\overline{\rho})(x)
\int_{\mathbb{T}^d\setminus\Omega_4}\frac{\rho(x)-\rho(y)}{|x-y|^{d+\alpha}}dydx\\
&\leq C\sqrt{|\mathbb{T}^d|}(1+\rho(\overline{x})^{\frac{\alpha+d}{\alpha+d-\beta}})
\|\rho-\overline{\rho})\|_{L^2}\int_{\mathbb{T}^d}|(\rho-\overline{\rho})|(x)dx\\
&\leq C|\mathbb{T}^d|(1+\rho(\overline{x})^{\frac{\alpha+d}{\alpha+d-\beta}})
\|\rho-\overline{\rho})\|^2_{L^2}\\
&\leq C(1+\rho(\overline{x})^{\frac{\alpha+d}{\alpha+d-\beta}})\|\rho-\overline{\rho})\|^2_{L^2}.
\end{aligned}
\end{equation}
Integrate the first term of the right-hand side of (\ref{eq:4.68}) over $\mathbb{T}^d$, and applying the Lemma \ref{lem:2.1}, one get
\begin{equation}\label{eq:4.81}
\frac{1}{2}\int_{\mathbb{T}^d}(-\Delta)^{\frac{\alpha}{2}}\rho(\rho-\overline{\rho}) dx=\frac{1}{2}\|\rho\|^2_{\dot{H}^{\frac{\alpha}{2}}}.
\end{equation}
Combining (\ref{eq:4.68}), (\ref{eq:4.69}), (\ref{eq:4.70}), (\ref{eq:4.74}), (\ref{eq:4.76}), (\ref{eq:4.80}) and (\ref{eq:4.81}), the second term, the third term and the fourth term of the left-hand side of (\ref{eq:4.58}) can be estimated as
\begin{equation}\label{eq:4.82}
\begin{aligned}
-&\int_{\mathbb{T}^d}(-\Delta)^{\frac{\alpha}{2}}\rho(\rho-\overline{\rho}) dx
+\frac{1}{2}\int_{\mathbb{T}^d}(\rho-\overline{\rho})^2\Delta K\ast \rho dx
-\int_{\mathbb{T}^d}\rho(\rho-\overline{\rho})\Delta K\ast \rho dx\\
\leq &-\frac{1}{2}\|\rho\|^2_{\dot{H}^{\frac{\alpha}{2}}}
+C(\rho(\overline{x})+\rho(\overline{x})^{1+\frac{\beta-d}{\alpha+d-\beta}})\|\rho-\overline{\rho}\|^2_{L^2}\\
&+ C(\rho(\overline{x})+\rho(\overline{x})^{1+\frac{\beta-d}{\alpha+d-\beta}})
\|\rho-\overline{\rho}\|^2_{L^2}
+C(1+\rho(\overline{x})^{\frac{\alpha+d}{\alpha+d-\beta}})\|\rho-\overline{\rho})\|^2_{L^2}\\
\leq &-\frac{1}{2}\|\rho\|^2_{\dot{H}^{\frac{\alpha}{2}}}
+C\left((\rho(\overline{x})+\rho(\overline{x})^{1+\frac{\beta-d}{\alpha+d-\beta}})
+(1+\rho(\overline{x})^{\frac{\alpha+d}{\alpha+d-\beta}})\right)
\|\rho-\overline{\rho}\|^2_{L^2}.
\end{aligned}
\end{equation}
Combining $\|\rho_0\|_{L^\infty}\leq C_\infty$, Corollary \ref{cor:4.2} and Lemma \ref{lem:4.3}, we imply that for any $t\in [0,\tau'_0]$, the second term of right-hand side of (\ref{eq:4.82}) can be expressed as
\begin{equation}\label{eq:4.83}
\begin{aligned}
&C\left((\rho(\overline{x})+\rho(\overline{x})^{1+\frac{\beta-d}{\alpha+d-\beta}})
+(1+\rho(\overline{x})^{\frac{\alpha+d}{\alpha+d-\beta}})\right)\|\rho-\overline{\rho}\|^2_{L^2}\\
&\leq C\left((2C_\infty+(2C_\infty)^{1+\frac{\beta-d}{\alpha+d-\beta}})
+(1+(2C_\infty)^{\frac{\alpha+d}{\alpha+d-\beta}})\right)\|\rho-\overline{\rho}\|^2_{L^2},
\end{aligned}
\end{equation}
where $\tau'_0$ is defined in (\ref{eq:4.54}). If denote
\begin{equation}\label{eq:4.84}
C_2=2C\left((2C_\infty+(2C_\infty)^{1+\frac{\beta-d}{\alpha+d-\beta}})
+(1+(2C_\infty)^{\frac{\alpha+d}{\alpha+d-\beta}})\right).
\end{equation}
Then we deduce by (\ref{eq:4.58}), (\ref{eq:4.82}), (\ref{eq:4.83}) and (\ref{eq:4.84}) that for any $t\in [0,\tau'_0]$, one has
\begin{equation}\label{eq:4.85}
\frac{d}{dt}\|\rho-\overline{\rho}\|^2_{L^2}\leq -\|\rho\|^2_{\dot{H}^{\frac{\alpha}{2}}}+C_2\|\rho-\overline{\rho}\|^2_{L^2}.
\end{equation}
Define
\begin{equation}\label{eq:4.86}
\tau_1=\min\left\{ \frac{2\ln 2}{C_2},\tau'_0\right\}.
\end{equation}
Since $\|\rho_0-\overline{\rho}\|_{L^2}\leq (B^2_0-\overline{\rho}^2)^{\frac{1}{2}}$. By solving  the differential inequality in (\ref{eq:4.85}), then we deduce that for any $0<t\leq \tau_1$, one has
$$
\|\rho(t,\cdot)-\overline{\rho}\|_{L^2}\leq 2(B^2_0-\overline{\rho}^2)^{\frac{1}{2}}.
$$
This completes the proof of Lemma \ref{lem:4.4}.
\end{proof}

\vskip .1in

\begin{remark}\label{rem:14}
(1). In fact, since $L^1$ norm of solution of Equation (\ref{eq:1.1}) is conserved, so we also get the local $L^2, L^p$ estimates of the solution only depend on the local $L^\infty$ estimate by interpolation inequality. In this paper, we need to get the local $L^2, L^p$ estimates of the solution only depend on the $L^2$ and $ L^p$ of initial data $\rho_0$. This is a necessary technical requirement in our discussion.

\vskip .05in

(2). In fact, the estimation of $\|\rho-\overline{\rho}\|_{L^2}$ can also be established by Kato-Ponce type inequality (see \cite{Vicol.2014,Loukas.2014,Hopf.2018}), Young's inequality and Gagliardo-Nirenberg inequality. In this paper, we give a new proof, the main idea of proof is from the estimation of $\|\rho\|_{L^p}$ in Lemma \ref{lem:4.3}. Notice that, our proof is more general, the estimate of $\|\rho-\overline{\rho}\|_{L^2}$ can be obtained for the case of $\beta>d, \beta-d<\alpha<2$, see Remark \ref{rem:3}.

\vskip .05in

(3). In the proof of Lemma \ref{lem:4.3} and Lemma \ref{lem:4.4}, we omit the discussion of the commutativity of integration and summation. The strategies and ideas of discussion can be referred to \cite{Cordoba.2004}.

\vskip .05in

(4). Comparison with \cite{Shi.2019}, the local estimates of solution to Equation (\ref{eq:1.1}) is complicated and difficult, which is from the destabilizing effect of mixing. Namely, for the $L^\infty$ estimate, since $B(\rho)$ is strong singular kernel, the $\Delta K\notin L^1$, we use the singular integral formulation of fractional Laplacian to control the singular part of nonlinear term, which is applied in the proof of Proposition \ref{prop:3.1}. For the $L^p$ estimates, the nonlinear term produces an estimate of the derivative of solution by energy method. As technical difficulties, it can not be controlled by dissipative term.  According to of Proposition \ref{prop:3.1}, the estimate of the derivative of solution depends on constant $A$, see Remark \ref{rem:11}. Then the local time is very short if $A$ is large enough,  and it may not satisfy (\ref{eq:5.21}).  In other words, mixing has is destabilizing effect for the estimation of the derivative of solution. So we need some new techniques and piece estimate to overcome this difficulties. In this paper, we use the similar idea with the proof of $L^\infty$ estimate and some useful techniques, which is from 2d quasi-geostrophic equation (see \cite{Cordoba.2004}).
\end{remark}

\vskip .2in

\section{Global estimate of solution to equation (\ref{eq:1.1})}

\vskip .1in

In this section, we establish the global $L^\infty$ estimate of solution to Equation (\ref{eq:1.1}) by bootstrap argument. For the analysis of mixing effect, some new ideas and techniques are introduced, and obtain the global $L^\infty$ estimate by nonlinear maximum principle. Firstly, we give an approximation lemma.

\begin{lemma}\label{lem:5.1}
Let $\beta-d<\alpha<2, \beta\in (d,d+1), d>2$, $u(x)$ is weakly mixing. Suppose that $\rho(t,x), \eta(t,x)$ are the solution of Equations (\ref{eq:1.1}) and (\ref{eq:2.12}) respectively with initial data $\rho_0(x)$. If $\rho_0(x)$ satisfies (\ref{eq:4.14})-(\ref{eq:4.16}), then for every $t\in [0,\tau_1]$, there exist a finite positive constant $C>0$, such that
\begin{equation}\label{eq:5.1}
\|P_{N}(\rho-\eta)(t,\cdot)\|_{L^2}\leq  CNC_\infty(B^2_0-\overline{\rho}^2)^{\frac{1}{2}}t,
\end{equation}
where $\tau_1$ is defined in (\ref{eq:4.86}) and $P_N$ is defined in (\ref{eq:2.8}).
\end{lemma}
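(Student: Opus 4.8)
The plan is to compare the nonlinear solution $\rho$ of \eqref{eq:1.1} with the linear solution $\eta$ of \eqref{eq:2.12} by writing the equation for the difference $w=\rho-\eta$ and estimating $\|P_N w\|_{L^2}$ directly. Subtracting \eqref{eq:2.12} from \eqref{eq:1.1}, we get
\begin{equation}\label{eq:diffeq}
\partial_t w + Au\cdot\nabla w + (-\Delta)^{\frac{\alpha}{2}}w = -\nabla\cdot(\rho B(\rho)),\qquad w(0,\cdot)=0.
\end{equation}
Applying the projection $P_N$ (which commutes with $(-\Delta)^{\frac{\alpha}{2}}$ but not with $u\cdot\nabla$), testing against $P_N w$, and using the positivity of the dissipation term (Lemma \ref{lem:2.1} with $p=2$), the strategy is to show that the $L^2$ norm of $P_N w$ grows at most at the rate $CN C_\infty (B_0^2-\overline\rho^2)^{1/2}$ coming from the forcing term $-P_N\nabla\cdot(\rho B(\rho))$; since $w(0,\cdot)=0$, integrating in time over $[0,t]$ then yields \eqref{eq:5.1}.

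First I would bound the forcing in Fourier-localized $L^2$. Because $P_N\nabla\cdot(\rho B(\rho))$ has frequencies $\le N$, one has $\|P_N\nabla\cdot(\rho B(\rho))\|_{L^2}\le CN\|P_N(\rho B(\rho))\|_{L^2}\le CN\|\rho B(\rho)\|_{L^2}$; then using \eqref{eq:1.5}, write $\rho B(\rho)=\rho\,\nabla K\ast\rho$, and estimate $\|\rho\,\nabla K\ast\rho\|_{L^2}\le \|\rho\|_{L^\infty}\|\nabla K\ast\rho\|_{L^2}\le C\|\rho\|_{L^\infty}\|\nabla K\|_{L^1}\|\rho\|_{L^2}$ by Young's inequality, noting $\nabla K\in L^1$ on $\mathbb{T}^d$ since $\beta<d+1$ (so $|\nabla K(x)|\sim |x|^{1-\beta}$ is integrable near the origin, and $K$ is smooth away from $0$). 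On $[0,\tau_1]$ we invoke Corollary \ref{cor:4.2} to get $\|\rho\|_{L^\infty}\le 2C_\infty$ and Lemma \ref{lem:4.4} to get $\|\rho-\overline\rho\|_{L^2}\le 2(B_0^2-\overline\rho^2)^{1/2}$, hence $\|\rho\|_{L^2}\le C(B_0^2-\overline\rho^2)^{1/2}+|\mathbb{T}^d|^{1/2}\overline\rho\le C(B_0^2-\overline\rho^2)^{1/2}$ after absorbing constants (or, more carefully, using that the nonlinearity is in divergence form so only $\rho-\overline\rho$ enters effectively). This gives $\|P_N\nabla\cdot(\rho B(\rho))\|_{L^2}\le CN C_\infty (B_0^2-\overline\rho^2)^{1/2}$.

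Next I would handle the transport term. The issue is that $P_N(Au\cdot\nabla w)\ne Au\cdot\nabla(P_N w)$, so testing $P_N$ of \eqref{eq:diffeq} against $P_N w$ does not make the advection disappear. The clean way is to test the \emph{unprojected} equation \eqref{eq:diffeq} against $P_N^2 w$: then $\int Au\cdot\nabla w\,P_N^2 w\,dx$ is not obviously zero either, but one can instead use a duality/semigroup argument. Specifically, I would use Lemma \ref{lem:2.9}: the solution operator for the linear part $\partial_t + (-\Delta)^{\alpha/2}+Au\cdot\nabla$ is $e^{-tH^\alpha_A}$, and by Duhamel,
\begin{equation}\label{eq:duh}
w(t)=-\int_0^t e^{-(t-s)H^\alpha_A}\,\nabla\cdot(\rho B(\rho))(s)\,ds.
\end{equation}
Applying $P_N$ and using Lemma \ref{lem:2.9} (for $A$ large, $\|P_N e^{-sH^\alpha_A}f\|_{L^2}\le C\|P_N f\|_{L^2}$ — note one must check $\nabla\cdot(\rho B(\rho))$ lies in $D(H^\alpha_A)$, or approximate), we get
\begin{equation}\label{eq:duhest}
\|P_N w(t)\|_{L^2}\le C\int_0^t \|P_N\nabla\cdot(\rho B(\rho))(s)\|_{L^2}\,ds\le CN C_\infty (B_0^2-\overline\rho^2)^{1/2}\,t,
\end{equation}
which is exactly \eqref{eq:5.1}.

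The main obstacle I anticipate is the advection term: the naive energy estimate on $P_N w$ fails because $P_N$ and $u\cdot\nabla$ do not commute, and a commutator $[P_N, Au\cdot\nabla]w$ would carry a factor of $A$ that we cannot afford (it would ruin the $A$-independence needed later for the bootstrap). The resolution is to route the argument through the semigroup estimate of Lemma \ref{lem:2.9} as in \eqref{eq:duh}--\eqref{eq:duhest}, which absorbs the advection into $e^{-tH^\alpha_A}$ at the cost only of the harmless constant $C>1$; the other delicate point is the regularity needed to justify Duhamel's formula and the application of Lemma \ref{lem:2.9} to $\nabla\cdot(\rho B(\rho))$, which is handled by the $W^{3,\infty}$ regularity of $\rho$ from Proposition \ref{prop:3.1} together with a standard density/approximation argument.
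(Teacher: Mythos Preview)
Your proposal is correct and follows essentially the same route as the paper: write the difference equation, apply Duhamel's formula to express $\rho-\eta$ via the semigroup $e^{-tH^\alpha_A}$, project with $P_N$, invoke Lemma~\ref{lem:2.9} to pass $P_N$ through the semigroup, and then bound $\|P_N\nabla\cdot(\rho B(\rho))\|_{L^2}\le CN\|\rho\|_{L^\infty}\|\nabla K\|_{L^1}\|\rho-\overline\rho\|_{L^2}$ using Corollary~\ref{cor:4.2} and Lemma~\ref{lem:4.4}. One point to tighten: your line ``$|\mathbb{T}^d|^{1/2}\overline\rho\le C(B_0^2-\overline\rho^2)^{1/2}$ after absorbing constants'' is false in general (take $\rho_0$ nearly constant), so you must commit to the parenthetical fix---the paper does this explicitly by writing $\nabla K\ast\rho=\nabla K\ast(\rho-\overline\rho)$, which holds because $\int_{\mathbb{T}^d}\nabla K=0$ for the periodic kernel.
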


\vskip .1in

\begin{proof}
Consider the Equations (\ref{eq:1.1}) and (\ref{eq:2.12}), to obtain
\begin{equation}\label{eq:5.2}
\partial_t(\rho-\eta)+Au\cdot \nabla (\rho-\eta)+(-\Delta)^{\frac{\alpha}{2}}(\rho-\eta)+\nabla\cdot(\rho B(\rho))=0.
\end{equation}
By Duhamel's principe and (\ref{eq:2.14}), then the solution $\rho-\eta$  of (\ref{eq:5.2}) can be expressed as
\begin{equation}\label{eq:5.3}
(\rho-\eta)(t,x)=\int_{0}^{t}e^{-sH^\alpha_A}\left(\nabla\cdot(\rho B(\rho))(t-s,\cdot)\right)ds,
\end{equation}
where the definition of $e^{-sH^\alpha_A}$ can be referred to Section 2. The operator $P_N$ is applied to (\ref{eq:5.3}), then one get
\begin{equation}\label{eq:5.4}
\begin{aligned}
P_{N}(\rho-\eta)(t,x)&=\int_{0}^{t}P_{N}e^{-sH^\alpha_A}\left(\nabla\cdot(\rho B(\rho))(t-s,\cdot)\right)ds.
\end{aligned}
\end{equation}
Combining Lemma \ref{lem:2.9}, H\"{o}lder's inequality and Young's inequality, to obtain
\begin{equation}\label{eq:5.5}
\begin{aligned}
&\|P_{N}e^{-sH^\alpha_A}\left(\nabla\cdot(\rho B(\rho))(t-s,\cdot)\right)\|_{L^2}\\
&\leq \|P_{N}\left(\nabla\cdot(\rho B(\rho))(t-s,\cdot)\right)\|_{L^2} \\
&\leq N\|\rho\nabla K\ast(\rho-\overline{\rho})(t-s,\cdot)\|_{L^2}\\
&\leq CN\|\rho(t-s,\cdot)\|_{L^\infty}\|\nabla K\|_{L^1}\|(\rho-\overline{\rho})(t-s,\cdot)\|_{L^2}.
\end{aligned}
\end{equation}
For any $0\leq s\leq t\leq \tau_1$, where the $\tau_1$ is defined in (\ref{eq:4.86}), then $t-s\in [0,\tau_1]$.
We deduce by Corollary \ref{cor:4.2} and Lemma \ref{lem:4.4} that
\begin{equation}\label{eq:5.6}
\|\rho(t-s,\cdot)\|_{L^\infty}\leq 2C_\infty,\ \ \ \|(\rho-\overline{\rho})(t-s,\cdot)\|_{L^2}\leq 2(B^2_0-\overline{\rho}^2)^{\frac{1}{2}}.
\end{equation}
As $\|\nabla K\|_{L^1}$ is bounded, then for any $0 \leq t\leq \tau_1$, combining (\ref{eq:5.4}), (\ref{eq:5.5}) and (\ref{eq:5.6}), we have
\begin{equation}\label{eq:5.7}
\begin{aligned}
\|P_{N}(\rho-\eta)(t,\cdot)\|_{L^2}&\leq \int_{0}^{t}\|P_{N}e^{-sH^\alpha_A}\left(\nabla\cdot(\rho B(\rho))(t-s,\cdot)\right)\|_{L^2}ds\\
&\leq \int_{0}^{t}CN\|\rho(t-s,\cdot)\|_{L^\infty}\|\nabla K\|_{L^1}\|(\rho-\overline{\rho})(t-s,\cdot)\|_{L^2}ds\\
&\leq 4C\|\nabla K\|_{L^1}NC_\infty(B^2_0-\overline{\rho}^2)^{\frac{1}{2}}t\\
&\leq CNC_\infty(B^2_0-\overline{\rho}^2)^{\frac{1}{2}}t.
\end{aligned}
\end{equation}
This completes the proof of Lemma \ref{lem:5.1}.
\end{proof}

\vskip .1in

\begin{remark}\label{rem:15}
In the Lemma \ref{lem:5.1},  we only need to consider the low frequency part of the solution to  Equations (\ref{eq:1.1}) and (\ref{eq:2.12}) for establishing the approximation lemma, which  is different from \cite{Constantin.2008,Shi.2019,Hopf.2018,Kiselev.2016}. And for the case of $\beta>d, \beta-d<\alpha<2$ (see Remark \ref{rem:3}), the proof of Lemma \ref{lem:5.1} needs simple modification.
\end{remark}

\vskip .1in

Let us denote $\rho(t,x)$ is the solution of Equation (\ref{eq:1.1}) with initial data $\rho_0(x)$. Define
\begin{equation}\label{eq:5.8}
\Phi(A)=\inf_{t\geq0} \frac{\|\Lambda^{\frac{\alpha}{2}}(\rho-\overline{\rho})\|^2_{L^2}}{\|\rho-\overline{\rho}\|^2_{L^2}}
=\inf_{t\geq0} \frac{\|\Lambda^{\frac{\alpha}{2}}\rho\|^2_{L^2}}{\|\rho-\overline{\rho}\|^2_{L^2}}.
\end{equation}
Based on the idea of contradiction and the estimate of semigroup to linear problem, we establish the estimate of $\Phi(A)$, this is a new observation for mixing effect. The main lemma is as follows

\vskip .1in

\begin{lemma}\label{lem:5.2}
Let $\beta-d<\alpha<2, \beta\in (d,d+1), d> 2$, $\rho(t,x)$ is the solution of Equation (\ref{eq:1.1}) with initial data $\rho_0(x)$. Suppose that $\rho_0(x)$ satisfies (\ref{eq:4.14})-(\ref{eq:4.16}). If $u(x)$ is weakly mixing, then there exist a set $\Sigma(t)\subset [0,\tau_1]$, and $|\Sigma(t)|= \frac{\tau_1}{2}$,  such that for all $t\in\Sigma(t)$, we have
\begin{equation}\label{eq:5.9}
\lim_{A\rightarrow +\infty}\Phi(A)=+\infty,
\end{equation}
where $\tau_1$ is defined in (\ref{eq:4.86}), $\Phi(A)$ is defined in (\ref{eq:5.8}) and $|\Sigma(t)|$ is measure of $\Sigma(t)$.
\end{lemma}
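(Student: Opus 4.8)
The plan is to argue by contradiction, leveraging the enhanced dissipation encoded in Lemma~\ref{lem:2.8} (i.e. $\Psi(H^\alpha_A)\to\infty$) together with the approximation Lemma~\ref{lem:5.1}. Suppose that \eqref{eq:5.9} fails. Then there is a constant $M_0<\infty$, a sequence $A_j\to+\infty$, and times $t_j\in[0,\tau_1]$ such that $\|\Lambda^{\alpha/2}\rho(t_j,\cdot)\|_{L^2}^2\le M_0\|(\rho-\overline\rho)(t_j,\cdot)\|_{L^2}^2$. By the lower bound on the $L^2$ norm coming from Lemma~\ref{lem:4.4} (the $L^2$ norm cannot collapse on $[0,\tau_1]$, since we control it both above and below through the differential inequality \eqref{eq:4.85} and $L^1$ conservation), and the fact that a control on $\|\Lambda^{\alpha/2}\rho\|_{L^2}/\|\rho-\overline\rho\|_{L^2}$ forces the mass of $\widehat{\rho}(t_j,k)$ to concentrate on a fixed finite band $|k|\le N_0$, we may choose a single cutoff frequency $N_0=N_0(M_0,d,\alpha)$ such that $\|(I-P_{N_0})(\rho-\overline\rho)(t_j,\cdot)\|_{L^2}\le \tfrac14\|(\rho-\overline\rho)(t_j,\cdot)\|_{L^2}$ for all $j$; hence $\|P_{N_0}(\rho-\overline\rho)(t_j,\cdot)\|_{L^2}\ge \tfrac34\|(\rho-\overline\rho)(t_j,\cdot)\|_{L^2}$ stays bounded below by a fixed positive constant.

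Next I would compare $\rho$ with the linear flow $\eta=e^{-tH^\alpha_A}\rho_0$. By Lemma~\ref{lem:5.1}, $\|P_{N_0}(\rho-\eta)(t,\cdot)\|_{L^2}\le CN_0C_\infty(B_0^2-\overline\rho^2)^{1/2}t$ on $[0,\tau_1]$, which is a bound uniform in $A$. On the other hand, by the Gearhart--Prüss estimate Lemma~\ref{lem:2.7} and Lemma~\ref{lem:2.8}, $\|\eta(t,\cdot)-\overline\rho\|_{L^2}=\|e^{-tH^\alpha_A}(\rho_0-\overline\rho)\|_{L^2}\le e^{-t\Psi(H^\alpha_A)+\pi/2}\|\rho_0-\overline\rho\|_{L^2}$, so for any fixed $\delta\in(0,\tau_1)$ the linear part is as small as we like on $[\delta,\tau_1]$ once $A$ is large. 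The point of introducing the set $\Sigma(t)$ of measure $\tau_1/2$ is precisely to evade the initial layer $t\in[0,\delta)$ where the nonlinear Duhamel term in Lemma~\ref{lem:5.1} is not yet beaten by the decay of $\eta$: one fixes $\delta$ (say $\delta=\tau_1/4$) so that on $\Sigma(t):=[\delta,\delta+\tau_1/2]\subset[0,\tau_1]$ one has simultaneously $\|P_{N_0}(\rho-\eta)\|_{L^2}$ small (times $\le\tau_1$, so uniformly bounded, and we arrange the constants so this is $\le$ half of the lower bound) and, for $A$ large enough depending only on $\delta$ and $N_0$, $\|\eta-\overline\rho\|_{L^2}\le$ the other half. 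Adding the two, $\|P_{N_0}(\rho-\overline\rho)(t,\cdot)\|_{L^2}$ would be forced to be small for $t\in\Sigma(t)$, contradicting the lower bound obtained in the first paragraph as soon as we pick $t_j\in\Sigma(t)$ (legitimate, since $t_j$ ranges over $[0,\tau_1]$ and we may pass to a subsequence lying in $\Sigma(t)$, or more carefully re-run the dichotomy on $\Sigma(t)$ directly).

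The cleanest way to organize the bookkeeping, and the one I would actually write, is to make the argument quantitative rather than purely sequential: fix $\Sigma(t)=[\tau_1/4,3\tau_1/4]$, fix $N_0$ large, and show directly that for $A\ge A_0(N_0,\tau_1,\rho_0)$ one has $\|\Lambda^{\alpha/2}\rho(t,\cdot)\|_{L^2}^2/\|(\rho-\overline\rho)(t,\cdot)\|_{L^2}^2\ge \lambda_{N_0}^{\alpha/2}/2$ say, for every $t\in\Sigma(t)$ — the point being that a genuinely small ratio would put almost all the $L^2$ mass of $\rho-\overline\rho$ in frequencies $\le N_0$, which the triangle-inequality estimate above (small $P_{N_0}(\rho-\eta)$, small $\eta-\overline\rho$) rules out; letting $N_0\to\infty$ after $A\to\infty$ gives \eqref{eq:5.9}. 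The main obstacle I anticipate is exactly the interplay of the two time scales: the Duhamel bound of Lemma~\ref{lem:5.1} grows linearly in $t$ with an $A$-independent constant, while the linear decay $e^{-t\Psi(H^\alpha_A)}$ only kicks in after time $\sim 1/\Psi(H^\alpha_A)\to 0$; one must verify that $\tau_1$ (defined in \eqref{eq:4.86}, and crucially independent of $A$ thanks to the local estimates of Section~4) is large enough relative to the $A$-independent constants that the window $\Sigma(t)$ of measure $\tau_1/2$ can be chosen inside the regime where the linear decay dominates, and that the constant $C_\infty(B_0^2-\overline\rho^2)^{1/2}\tau_1$ appearing in Lemma~\ref{lem:5.1} does not exceed a fixed fraction of the coercivity lower bound on $\|P_{N_0}(\rho-\overline\rho)\|_{L^2}$ — this is a matter of tracking constants, and I would isolate it as the single estimate to verify carefully.
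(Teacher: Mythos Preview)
Your overall strategy---contradiction, frequency localization via the hypothetical bound on $\|\Lambda^{\alpha/2}\rho\|_{L^2}/\|\rho-\overline\rho\|_{L^2}$, and comparison with the linear flow through Lemmas~\ref{lem:5.1} and~\ref{lem:2.7}---matches the paper. However, the ``main obstacle'' you identify at the end is not merely a matter of tracking constants; it is a genuine gap that makes the argument, as written, fail.

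On your fixed window $\Sigma(t)=[\tau_1/4,3\tau_1/4]$, Lemma~\ref{lem:5.1} gives only
\[
\|P_{N_0}(\rho-\eta)(t,\cdot)\|_{L^2}\le CN_0C_\infty(B_0^2-\overline\rho^2)^{1/2}\,t,
\]
and for $t$ of order $\tau_1$ this is a fixed positive number that \emph{grows} with $N_0$. There is no free parameter available to make it small relative to the coercivity lower bound $\sim B_1$: the constants $C,C_\infty,B_0,\tau_1$ are fixed by the data, and $N_0$ is forced large by the hypothetical bound $M_0$. Hence the triangle inequality $\|P_{N_0}(\rho-\overline\rho)\|\le\|P_{N_0}(\rho-\eta)\|+\|P_{N_0}(\eta-\overline\rho)\|$ does not produce a contradiction.

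The paper fixes this by working not on a fixed window but on a \emph{shrinking} interval $[t_0,t_0+\tau]$ with $\tau=T_0/\Psi(H^\alpha_{A_n})\to 0$, and by restarting the linear equation at time $t_0$ rather than at $0$. On such a window the Duhamel contribution from Lemma~\ref{lem:5.1} is $O(N\tau)\to 0$, while the linear part is controlled in \emph{time average}:
\[
\frac{1}{\tau}\int_0^\tau\|e^{-tH^\alpha_{A_n}}(\rho(t_0,\cdot)-\overline\rho)\|_{L^2}^2\,dt
\le \frac{e^\pi}{2\Psi(H^\alpha_{A_n})\,\tau}\|\rho(t_0,\cdot)-\overline\rho\|_{L^2}^2
=\frac{e^\pi}{2T_0}\|\rho(t_0,\cdot)-\overline\rho\|_{L^2}^2,
\]
which is small once $T_0$ is chosen large enough (see \eqref{eq:5.20}, \eqref{eq:5.33}). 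To close, one still needs a pigeonhole/covering step (see \eqref{eq:5.22}--\eqref{eq:5.28}) to locate a $t_0$ in the ``bad'' set $\Sigma_1(t)$ for which the time-averaged low-frequency mass $\tfrac{1}{\tau}\int_{t_0}^{t_0+\tau}\|P_N(\rho-\overline\rho)\|_{L^2}^2\,dt$ stays bounded below; this is precisely where the measure hypothesis $|\Sigma_1(t)|=\tau_1/2$ and the a priori lower bound $\|\rho-\overline\rho\|_{L^2}\ge B_1$ (see \eqref{eq:5.13} and Remark~\ref{rem:16}) are actually used.
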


\vskip .1in

\begin{proof}
In fact, we need to prove that for any $G>0$, there exists a positive constant $A_0$ and a set $\Sigma(t)\subset [0,\tau_1]$, and $|\Sigma(t)|= \frac{\tau_1}{2}$, such that for any $A\geq A_0$ and all $t\in\Sigma(t)$, one has
\begin{equation}\label{eq:5.10}
\frac{\|\Lambda^{\frac{\alpha}{2}}\rho\|^2_{L^2}}{\|\rho-\overline{\rho}\|^2_{L^2}}>G.
\end{equation}
Next, we give the proof by contradiction. If (\ref{eq:5.10}) is not true, then there exist a positive constant $\delta'>2C_2$, a sequence $\{A_n\}_{n=1}^{\infty}$ and a set $\Sigma_1(t)\subset [0,\tau_1]$, such that
\begin{equation}\label{eq:5.11}
\lim_{n\rightarrow \infty}A_n=\infty, \ \ \ |\Sigma_1(t)|=\frac{\tau_1}{2},
\end{equation}
where $C_2$ is defined in (\ref{eq:4.84}). And for any $A_n, n=1,2,\cdots$, one get
\begin{equation}\label{eq:5.12}
\sup_{t\in \Sigma_1(t)} \frac{\|\Lambda^{\frac{\alpha}{2}}\rho\|^2_{L^2}}{\|\rho-\overline{\rho}\|^2_{L^2}}\leq\delta'.
\end{equation}
Without loss of generality, we assume that for any $t\in  \Sigma_1(t)$, one has
\begin{equation}\label{eq:5.13}
\|\rho(t,\cdot)-\overline{\rho}\|_{L^2}\geq B_1,
\end{equation}
where $B_1$ is defined in (\ref{eq:4.17}), the details can be referred in Remark \ref{rem:16}. According to (\ref{eq:5.12}), we know that for any $t\in \Sigma_1(t)$, one get
\begin{equation}\label{eq:5.14}
\|\Lambda^{\frac{\alpha}{2}}\rho(t,\cdot)\|^2_{L^2}\leq \delta' \|\rho(t,\cdot)-\overline{\rho}\|^2_{L^2}.
\end{equation}
If choose $N$, such that
\begin{equation}\label{eq:5.15}
\lambda_N^{\frac{\alpha}{2}} > 4\delta'.
\end{equation}
We claim that for any $t\in\Sigma_1(t)$, we have
\begin{equation}\label{eq:5.16}
\|(I-P_N)(\rho(t,\cdot)-\overline{\rho})\|^2_{L^2}\leq\frac{1}{4}\|\rho(t,\cdot)-\overline{\rho}\|^2_{L^2}.
\end{equation}
Because if (\ref{eq:5.16}) is not true, then there exist $t'\in \Sigma_1(t)$, such that
\begin{equation}\label{eq:5.17}
\|(I-P_N)(\rho(t',\cdot)-\overline{\rho})\|^2_{L^2}> \frac{1}{4}\|\rho(t',\cdot)-\overline{\rho}\|^2_{L^2}.
\end{equation}
Notice that $P_N$ is defined in (\ref{eq:2.8}), we deduce by (\ref{eq:5.15}) and (\ref{eq:5.17}) that
\begin{equation}\label{eq:5.18}
\begin{aligned}
\|\Lambda^{\frac{\alpha}{2}}\rho(t',\cdot)\|^2_{L^2}&\geq\|\Lambda^{\frac{\alpha}{2}}(I-P_N)\rho(t',\cdot)\|^2_{L^2}\\
&\geq \lambda_N^{\frac{\alpha}{2}}\|(I-P_N)(\rho(t',\cdot)-\overline{\rho})\|^2_{L^2}\\
&>\delta'\|\rho(t',\cdot)-\overline{\rho}\|^2_{L^2}.
\end{aligned}
\end{equation}
This is in contradiction with (\ref{eq:5.14}), we finish the proof of (\ref{eq:5.16}). According to (\ref{eq:5.13}) and (\ref{eq:5.16}), we deduce that for any $t\in \Sigma_1(t)$, one get
\begin{equation}\label{eq:5.19}
\|P_N(\rho(t,\cdot)-\overline{\rho})\|^2_{L^2}\geq \frac{3}{4}\|\rho(t,\cdot)-\overline{\rho}\|^2_{L^2}\geq \frac{3}{4}B^2_1.
\end{equation}
Denote
\begin{equation}\label{eq:5.20}
T_0=\frac{80(B_0^2-\overline{\rho}^2)}{B_1^2}e^{\pi}.
\end{equation}
Combining $\lim\limits_{n\rightarrow \infty}A_n=\infty$ and Lemma \ref{lem:2.8}, for $A_n$ is large enough, we can define
\begin{equation}\label{eq:5.21}
\tau=\frac{T_0}{\Psi(H^\alpha_{A_n})}<\frac{\tau_1}{6}.
\end{equation}
We claim that there exists $t_0\in \Sigma_1(t)$ and $[t_0,t_0+\tau)\subset [0,\tau_1]$, such that
\begin{equation}\label{eq:5.22}
\frac{1}{\tau}\int_{t_0}^{t_0+\tau}\|P_N(\rho(t,\cdot)-\overline{\rho})\|^2_{L^2}dt> \frac{1}{10}B^2_1.
\end{equation}
If (\ref{eq:5.22}) is not true, then for all $ t_i\in \Sigma_1(t), [t_i, t_i+\tau)\subset [0,\tau_1]$, and
\begin{equation}\label{eq:5.23}
[t_i, t_i+\tau)\cap [t_j, t_j+\tau)=\emptyset,\ \ \ i=0,1,2,\cdots,
\end{equation}
one can get
\begin{equation}\label{eq:5.24}
\frac{1}{\tau}\int_{t_i}^{t_i+\tau}\|P_N(\rho(t,\cdot)-\overline{\rho})\|^2_{L^2}dt\leq \frac{1}{10}B^2_1.
\end{equation}
If denote
\begin{equation}\label{eq:5.25}
E=\bigcup_{i}[t_i, t_i+\tau), \ \ \ E_1=E\cap \Sigma_1(t)=\bigcup_{i}([t_i, t_i+\tau)\cap \Sigma_1(t)).
\end{equation}
Combining (\ref{eq:5.23}), (\ref{eq:5.24}) and (\ref{eq:5.25}), we imply that
\begin{equation}\label{eq:5.26}
\begin{aligned}
\int_{E_1}\|P_N(\rho(t,\cdot)-\overline{\rho})\|^2_{L^2}dt&\leq \int_{E}\|P_N(\rho(t,\cdot)-\overline{\rho})\|^2_{L^2}dt\\
&=\sum_{i}\int_{t_i}^{t_i+\tau}\|P_N(\rho(t,\cdot)-\overline{\rho})\|^2_{L^2}dt\\
&\leq \frac{1}{10}B^2_1\sum_{i}\tau\leq \frac{1}{10}B^2_1\tau_1.
\end{aligned}
\end{equation}
According to (\ref{eq:5.23}) and the definition of $E_1$ in (\ref{eq:5.25}), one has
\begin{equation}\label{eq:5.27}
\frac{\tau_1}{3}\leq |E_1|\leq \frac{\tau_1}{2}.
\end{equation}
As $E_1\subset \Sigma_1(t)$, then we deduce by (\ref{eq:5.19}) and (\ref{eq:5.27}) that
\begin{equation}\label{eq:5.28}
\int_{E_1}\|P_N(\rho(t,\cdot)-\overline{\rho})\|^2_{L^2}dt\geq \frac{3}{4}B^2_1|E_1|\geq \frac{B^2_1}{4}\tau_1.
\end{equation}
Obviously, (\ref{eq:5.26}) and (\ref{eq:5.28})  are contradictory, so we finish the proof of (\ref{eq:5.22}). Next, consider the equations
\begin{equation}\label{eq:5.29}
\partial_t\rho+A_n u\cdot \nabla \rho+(-\Delta)^{\frac{\alpha}{2}}\rho+\nabla\cdot(\rho B(\rho))=0,\ \ \ \rho_0(x)=\rho(t_0,x),
\end{equation}
and
\begin{equation}\label{eq:5.30}
\partial_t\eta+A_n u\cdot \nabla\eta +(-\Delta)^{\frac{\alpha}{2}}\eta=0, \ \ \ \omega_0(x)=\rho(t_0,x).
\end{equation}
Then for the solution $\rho(t,x)$ of Equation (\ref{eq:5.29}), we deduce by Corollary \ref{cor:4.2} and Lemma \ref{lem:4.4} that
\begin{equation}\label{eq:5.31}
\|\rho(t,\cdot)-\overline{\rho}\|_{L^2}\leq 2(B_0^2-\overline{\rho}^2)^{\frac{1}{2}},\quad \|\rho(t,\cdot)\|_{L^\infty}\leq 2C_{\infty},\quad 0\leq t \leq \tau_1,
\end{equation}
where $\tau_1$ be defined in (\ref{eq:4.54}). Combining the Equation (\ref{eq:5.30}) and (\ref{eq:2.14}), to obtain
\begin{equation}\label{eq:5.32}
\eta(t_0+t,x)-\overline{\rho}=e^{-tH^\alpha_{A_n}}(\rho(t_0,x)-\overline{\rho}).
\end{equation}
Therefore, we deduce by the Lemma \ref{lem:2.7}, (\ref{eq:5.20}), (\ref{eq:5.21}) and (\ref{eq:5.32}) that for $A_n$ is large enough, one has
\begin{equation}\label{eq:5.33}
\begin{aligned}
\frac{1}{\tau}\int_{0}^{\tau}\|P_N(\eta-\overline{\rho})\|_{L^2}^2dt&= \frac{1}{\tau}\int_{0}^{\tau}\|P_N e^{-tH^\alpha_{A_n}}(\rho(t_0,x)-\overline{\rho})\|_{L^2}^2dt \\
 &\leq\frac{1}{\tau}\int_{0}^{\tau}\|e^{-tH^\alpha_{A_n}}(\rho(t_0,x)-\overline{\rho})\|_{L^2}^2dt\\
 &\leq e^{\pi}\|\rho(t_0,x)-\overline{\rho}\|_{L^2}^2\frac{1}{\tau}\int_{0}^{\tau}e^{-2t\Psi(H^\alpha_{A_n})}dt\\
 &\leq \frac{e^{\pi}}{2\Psi(H^\alpha_{A_n})\tau}\|\rho(t_0,x)-\overline{\rho}\|_{L^2}^2\\
 &\leq \frac{e^{\pi}}{2T_0}\|\rho(t_0,x)-\overline{\rho}\|_{L^2}^2\\
 &\leq \frac{1}{40}B_1^2.
\end{aligned}
\end{equation}
For any $t\in [0,\tau]$ and $A_n$ is large enough, the right-hand side of (\ref{eq:5.1}) can be estimated as
\begin{equation}\label{eq:5.34}
\begin{aligned}
CN^2C^2_\infty(B^2_0-\overline{\rho}^2)t^2&\leq CN^2C^2_\infty(B^2_0-\overline{\rho}^2)\left(\frac{T_0}{\Psi(H^\alpha_{A_n})}\right)^2\leq \frac{1}{40}B_1^2.
\end{aligned}
\end{equation}
Then we deduce by  Lemma \ref{lem:5.1} and (\ref{eq:5.34}) that for any $t\in [0,\tau]$, one has
\begin{equation}\label{eq:5.35}
\|P_N(\rho-\eta)(t_0+t,\cdot)\|^2_{L^2}\leq \frac{1}{40}B^2_1.
\end{equation}
Furthermore, we deduce by (\ref{eq:5.33}) and (\ref{eq:5.35}) that for any $t\in [0,\tau]$, we have
\begin{equation}\label{eq:5.36}
\begin{aligned}
\frac{1}{\tau}\int_{0}^{\tau}\|P_N(\rho(t_0+t,\cdot)-\overline{\rho})\|_{L^2}^2dt
& \leq\frac{2}{\tau}\int_{0}^{\tau}\|P_N(\eta(t_0+t,\cdot)-\overline{\rho})\|_{L^2}^2dt\\
&\quad +\frac{2}{\tau}\int_{0}^{\tau}\|P_N(\rho(t_0+t,\cdot)-\eta(t_0+t,\cdot))\|_{L^2}^2dt\\
&\leq\frac{B^2_1}{10}.
\end{aligned}
\end{equation}
However, we deduce by (\ref{eq:5.22}) that
\begin{equation}\label{eq:5.37}
\frac{1}{\tau}\int_{0}^{\tau}\|P_N(\rho(t_0+t,\cdot)-\overline{\rho})\|_{L^2}^2dt= \frac{1}{\tau}\int_{t_0}^{t_0+\tau}\|P_N(\rho(t,\cdot)-\overline{\rho})\|^2_{L^2}dt> \frac{1}{10}B^2_1.
\end{equation}
Obviously, (\ref{eq:5.36}) and (\ref{eq:5.37}) are contradictory, namely, (\ref{eq:5.10}) is true. Then
\begin{equation}\label{eq:5.38}
\lim_{A\rightarrow +\infty}\Phi(A)=+\infty.
\end{equation}
This completes the proof of Lemma \ref{lem:5.2}.
\end{proof}

\vskip .1in

\begin{remark}\label{rem:16}
For (\ref{eq:5.13}), if there exist $t_2\in \Sigma_1(t)$, such that
$$
\|\rho(t_2,\cdot)-\overline{\rho}\|_{L^2}< B_1.
$$
then the local solution can be extended to $[0,t_2+\tau_1]$. The details can be referred to the proof of Proposition \ref{prop:5.3}.
\end{remark}

\vskip .05in

\begin{remark}\label{rem:17}
In the Lemma \ref{lem:5.2}, the relationship between $\Phi(A)$ and $A$ depends on $\alpha,\beta,\rho_0, d$ and $u$.
\end{remark}

\vskip .1in

Now, we establish global $L^\infty$ estimate of the solution to Equation (\ref{eq:1.1}) in the case of weakly mixing.

\vskip .1in
\begin{proposition}[Global $L^\infty$ estimate]\label{prop:5.3}
Let $\beta-d<\alpha<2, \beta\in (d,d+1), d>2$, $\rho(t,x)$ is the solution of Equation (\ref{eq:1.1}) with initial data $\rho_0(x)$. Suppose that $\rho_0(x)$ satisfies (\ref{eq:4.14})-(\ref{eq:4.16}). If $u(x)$ is weakly mixing, then there exist a positive constant $A_0=A(\alpha,\beta,\rho_0, d)$, such that for $A\geq A_0$, we have
$$
\|\rho(t,\cdot)\|_{L^\infty}\leq C_{L^\infty},\quad  t\in [0,+\infty].
$$
where $C_{L^\infty}$ is a finite positive constant.
\end{proposition}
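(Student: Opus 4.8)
The plan is a bootstrap over consecutive time windows of the fixed length $\tau_1$ of Lemma \ref{lem:4.4}, which crucially does not depend on $A$. On $[0,\tau_1]$ I would combine the differential inequality $(\ref{eq:4.85})$ from the proof of Lemma \ref{lem:4.4}, namely $\frac{d}{dt}\|\rho-\overline{\rho}\|^2_{L^2}\leq-\|\Lambda^{\frac{\alpha}{2}}\rho\|^2_{L^2}+C_2\|\rho-\overline{\rho}\|^2_{L^2}$, with the quotient estimate produced inside the proof of Lemma \ref{lem:5.2} (see $(\ref{eq:5.10})$): for any $G>0$ there are $A_0(G)$ and a set $\Sigma\subset[0,\tau_1]$ with $|\Sigma|=\tau_1/2$ such that $\|\Lambda^{\frac{\alpha}{2}}\rho\|^2_{L^2}>G\|\rho-\overline{\rho}\|^2_{L^2}$ on $\Sigma$ once $A\geq A_0(G)$. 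Writing $y(t)=\|\rho(t,\cdot)-\overline{\rho}\|^2_{L^2}$ and integrating $\frac{d}{dt}\ln y\leq C_2-G$ on $\Sigma$ and $\frac{d}{dt}\ln y\leq C_2$ on $[0,\tau_1]\setminus\Sigma$ gives $y(\tau_1)\leq y(0)\,e^{-G\tau_1/2+C_2\tau_1}$. Since $y(0)\leq B_0^2-\overline{\rho}^2$, choosing $G$ (hence $A_0$) large enough that $e^{-G\tau_1/2+C_2\tau_1}\leq B_1^2/(B_0^2-\overline{\rho}^2)$ --- possible because $B_1\leq(B_0^2-\overline{\rho}^2)^{1/2}$ --- yields $\|\rho(\tau_1,\cdot)-\overline{\rho}\|_{L^2}\leq B_1$, with $B_1$ as in $(\ref{eq:4.17})$.

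\textbf{Step 2 (recovering the hypotheses $(\ref{eq:4.14})$--$(\ref{eq:4.16})$ at $t=\tau_1$).} From $\|\rho(\tau_1,\cdot)-\overline{\rho}\|_{L^2}\leq B_1$ I get $\|\rho(\tau_1,\cdot)\|^2_{L^2}=\|\rho(\tau_1,\cdot)-\overline{\rho}\|^2_{L^2}+\overline{\rho}^2\leq B_1^2+\overline{\rho}^2\leq B_0^2$; and, using Corollary \ref{cor:4.2} and interpolation,
\[
\|\rho(\tau_1,\cdot)-\overline{\rho}\|_{L^p}\leq\|\rho(\tau_1,\cdot)-\overline{\rho}\|_{L^2}^{2/p}\,\|\rho(\tau_1,\cdot)-\overline{\rho}\|_{L^\infty}^{1-2/p}\leq B_1^{2/p}(2C_\infty+\overline{\rho})^{1-2/p}\leq D_0
\]
exactly by the definition of $B_1$ in $(\ref{eq:4.17})$. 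To re-establish $\|\rho(\tau_1,\cdot)\|_{L^\infty}\leq C_\infty$ I would invoke Lemma \ref{lem:4.1}: since $p>\frac{d}{\alpha+d-\beta}$ (legitimate because $\alpha>\beta-d$), one has $p\alpha/d>\alpha/(\alpha+d-\beta)$, so the exponent $1+\frac{p\alpha}{d}$ in the dissipative term of $(\ref{eq:4.3})$ strictly exceeds both $2+\frac{\beta-d}{d+\alpha-\beta}$ and $2$; hence on any interval where $\|\rho\|_{L^p}\leq M$ the dissipation dominates the two destabilizing terms for large $\widetilde{\rho}$, and a standard barrier argument (falling back on case $(\ref{eq:4.2})$ when $(\ref{eq:4.3})$ is inactive) gives $\widetilde{\rho}(t)\leq\max\{\widetilde{\rho}(\text{start}),\Theta\}$ for a threshold $\Theta=\Theta(M,\alpha,\beta,d)$. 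Enlarging the still-free constant $C_\infty$ of $(\ref{eq:4.16})$ so that, besides $C_\infty\geq\max\{\|\rho_0\|_{L^\infty},2C(d,p)(D_0+\overline{\rho})\}$, one also has $C_\infty\geq\Theta(2(D_0+\overline{\rho}),\alpha,\beta,d)$, and using $\|\rho(t,\cdot)\|_{L^p}\leq2(D_0+\overline{\rho})$ on $[0,\tau_1]$ from Lemma \ref{lem:4.3}, I obtain $\|\rho(t,\cdot)\|_{L^\infty}\leq C_\infty$ throughout $[0,\tau_1]$, in particular at $t=\tau_1$.

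\textbf{Step 3 (iteration).} At $t=\tau_1$ the profile $\rho(\tau_1,\cdot)$ is non-negative, belongs to $W^{3,\infty}(\mathbb{T}^d)$ by Proposition \ref{prop:3.1}, has the same mean $\overline{\rho}$, and satisfies $(\ref{eq:4.15})$--$(\ref{eq:4.16})$ with the very same $B_0,D_0,C_\infty$. Because the threshold $G$ and hence $A_0$ from Step 1 depend only on $\alpha,\beta,d,u$ and on these fixed constants --- not on the particular solution --- Lemma \ref{lem:5.1} and Lemma \ref{lem:5.2} apply verbatim to the time-shifted solution on $[\tau_1,2\tau_1]$, and Steps 1--2 repeat. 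By induction the bounds $\|\rho(t,\cdot)\|_{L^\infty}\leq C_\infty$, $\|\rho(t,\cdot)-\overline{\rho}\|_{L^2}\leq2(B_0^2-\overline{\rho}^2)^{1/2}$ and $\|\rho(t,\cdot)\|_{L^p}\leq2(D_0+\overline{\rho})$ hold on every $[k\tau_1,(k+1)\tau_1]$, hence on all of $[0,+\infty)$; together with the continuation criterion of Proposition \ref{prop:3.1} this yields the global solution and the claimed bound with $C_{L^\infty}=C_\infty$.

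\textbf{Main obstacle.} The real difficulty is Step 2: converting the enhanced-dissipation smallness of $\|\rho-\overline{\rho}\|_{L^2}$ into a genuine recovery of the $L^\infty$ bound in the presence of the strong singular kernel, whose contribution shows up in $(\ref{eq:4.3})$ as the destabilizing term $C\widetilde{\rho}^{2+\frac{\beta-d}{d+\alpha-\beta}}$. It is precisely the subcritical choice $p>\frac{d}{\alpha+d-\beta}$ that allows the fractional dissipation to absorb this term, so that $\|\rho\|_{L^\infty}$ is controlled by $\|\rho\|_{L^p}$, which in turn is controlled through interpolation by the (now small) $L^2$ norm. A secondary but essential point is to check that the constant $A_0$ delivered by the enhanced-dissipation argument is uniform across all windows, so that the induction in Step 3 really closes.
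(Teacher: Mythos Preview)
Your proposal is correct and follows essentially the same route as the paper: combine the differential inequality $(\ref{eq:4.85})$ with the enhanced-dissipation bound from Lemma \ref{lem:5.2} on a half-measure subset of $[0,\tau_1]$ to force $\|\rho(\tau_1,\cdot)-\overline{\rho}\|_{L^2}\leq B_1$, recover the $L^p$ bound by interpolation via $(\ref{eq:4.17})$, close the $L^\infty$ bound by the dichotomy of Lemma \ref{lem:4.1} using $p>\tfrac{d}{\alpha+d-\beta}$ so that the dissipative exponent $1+\tfrac{p\alpha}{d}$ beats $2+\tfrac{\beta-d}{d+\alpha-\beta}$, and then iterate. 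The paper makes the barrier argument explicit by defining $M_0$ as the largest root of $-C_5 x^{1+p\alpha/d}+Cx^{2+(\beta-d)/(d+\alpha-\beta)}+Cx^2=0$ and setting $C_{L^\infty}=\max\{2C(d,p)(D_0+\overline{\rho}),M_0,\|\rho_0\|_{L^\infty}\}$, which is exactly your threshold $\Theta$; your observation that this quantity depends only on $D_0,\overline{\rho},\alpha,\beta,d,p$ and not on $C_\infty$ itself is what makes the enlargement of $C_\infty$ non-circular, and your remark that $A_0$ depends only on the fixed constants (so the induction closes) is the point the paper leaves implicit.
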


\vskip .1in

\begin{proof}
Combining (\ref{eq:4.85}) and the definition of $\Phi(A)$ in (\ref{eq:5.8}), to obtain
\begin{equation}\label{eq:5.39}
\begin{aligned}
\frac{d}{dt}\|\rho-\overline{\rho}\|_{L^2}^2&\leq-\|\rho\|_{\dot{H}^{\frac{\alpha}{2}}}^2
+C_2\|\rho-\overline{\rho}\|^{2}_{L^2}\\
&\leq -\Phi(A)\|\rho-\overline{\rho}\|^{2}_{L^2}+C_2\|\rho-\overline{\rho}\|^{2}_{L^2}.
\end{aligned}
\end{equation}
Then we deduce by (\ref{eq:5.39}) and Gronwall's inequality that
\begin{equation}\label{eq:5.40}
\begin{aligned}
\|(\rho-\overline{\rho})(\tau_1,\cdot)\|_{L^2}^2
&\leq \|\rho_0-\overline{\rho}\|_{L^2}^2\exp\left(\int_0^{\tau_1}(-\Phi(A)+C_2)dt\right)\\
&\leq e^{C_2\tau_1}\|\rho_0-\overline{\rho}\|_{L^2}^2\exp(-\int_0^{\tau_1}\Phi(A)dt).
\end{aligned}
\end{equation}
According to Lemma \ref{lem:5.2} and (\ref{eq:5.40}), when $A$ is large enough, one has
\begin{equation}\label{eq:5.41}
\begin{aligned}
\|(\rho-\overline{\rho})(\tau_1,\cdot)\|_{L^2}^2
&\leq e^{C_2\tau_1}\|\rho_0-\overline{\rho}\|_{L^2}^2\exp\left(-\int_{\Sigma(t)}\Phi(A)dt\right)\\
&\leq e^{C_2\tau_1}\|\rho_0-\overline{\rho}\|_{L^2}^2\exp\left(-\frac{\tau_1}{2}\Phi(A)dt\right)\\
&\leq B^2_1.
\end{aligned}
\end{equation}
Combining (\ref{eq:4.17}), (\ref{eq:5.41}), Lemma \ref{lem:4.3} and interpolation inequality, to obtain
\begin{equation}\label{eq:5.42}
\|\rho(\tau_1,\cdot)-\overline{\rho}\|_{L^p}\leq \|\rho-\overline{\rho}\|^{\frac{2}{p}}_{L^2}\|\rho-\overline{\rho}\|^{1-\frac{2}{p}}_{L^\infty}
\leq D_0.
\end{equation}
Thus one has
\begin{equation}\label{eq:5.43}
\|\rho(\tau_1,\cdot)\|_{L^p}\leq D_0+\overline{\rho}.
\end{equation}
According to Lemma \ref{lem:4.3}, for any $t\in [0,\tau_1]$, one has
\begin{equation}\label{eq:5.44}
\|\rho(t,\cdot)\|_{L^p}\leq 2(D_0+\overline{\rho}).
\end{equation}
If denote
$$
\widetilde{\rho}(t)=\rho(t,\overline{x}_t)=\max_{x\in \mathbb{T}^d}\rho(t,x).
$$
Combining Lemma \ref{lem:2.3} and $t\in [0,\tau_1]$, if $\widetilde{\rho}(t)$ satisfies the (\ref{eq:2.6}), we deduce by (\ref{eq:4.16}) and (\ref{eq:5.44}) that
\begin{equation}\label{eq:5.45}
\widetilde{\rho}(t)\leq C(d,p)\|\rho(t,\cdot)\|_{L^p}\leq2C(d,p)(D_0+\overline{\rho})\leq C_\infty.
\end{equation}
If not, then $\widetilde{\rho}(t)> 2C(d,p)(D_0+\overline{\rho})$ and $\widetilde{\rho}(t)$ satisfies (\ref{eq:2.5}). According to (\ref{eq:4.3}) and ({\ref{eq:5.44}}), one get
\begin{equation}\label{eq:5.46}
\frac{d}{dt}\widetilde{\rho}\leq -C_5\widetilde{\rho}^{1+\frac{p\alpha}{d}}+C\rho(\overline{x})^{2+\frac{\beta-d}{d+\alpha-\beta}}
+C\rho(\overline{x})^2,   \ \ \ t\in [0,\tau_1],
\end{equation}
where
$$
C_5=\frac{C(\alpha,d,p)}{2^{1+\frac{p\alpha}{d}}(D_0+\overline{\rho}))^{\frac{p\alpha}{d}}}.
$$
Notice that the constants $C(d,p)$ and $C(\alpha, d,p)$ are from Lemma \ref{lem:2.3}. We set
\begin{equation}\label{eq:5.47}
M_0=\max\{x|-C_5x^{1+\frac{p\alpha}{d}}+Cx^{2+\frac{\beta-d}{d+\alpha-\beta}}
+Cx^2=0\},
\end{equation}
and  denote
\begin{equation}\label{eq:5.48}
C_{L^\infty}=\max\left\{2C(d,p)(D_0+\overline{\rho}), M_0, \|\rho_0\|_{L^\infty}\right\}.
\end{equation}
Since $p>\frac{d}{\alpha+d-\beta}$, then
\begin{equation}\label{eq:5.49}
1+\frac{p\alpha}{d}>2+\frac{\beta-d}{d+\alpha-\beta}.
\end{equation}
Solving the differential inequality of (\ref{eq:5.46}), we deduce by the definition of $C_{L^\infty}$ in (\ref{eq:5.48}) and (\ref{eq:5.49}) that
\begin{equation}\label{eq:5.50}
\widetilde{\rho}(t)\leq  C_{L^\infty},\ \ \ t\in [0,\tau_1].
\end{equation}
According to (\ref{eq:5.47}) and (\ref{eq:5.48}), we know that $C_{L^\infty}$ is  independent of $\rho(x)$. Without loss of generality, we take $C_\infty=C_{L^\infty}$. Combining (\ref{eq:5.45}) and (\ref{eq:5.50}), to obtain
\begin{equation}\label{eq:5.51}
\|\rho(t,\cdot)\|_{L^\infty}\leq C_{L^\infty}\ \ \ t\in [0,\tau_1].
\end{equation}
For the solution $\rho(t,x)$ of Equation (\ref{eq:1.1}), by the similarity with (\ref{eq:5.41}) and (\ref{eq:5.43}), we deduce by bootstrap argument that for any $n\in\mathbb{Z}^+$, one has
$$
\|\rho(n\tau_1,\cdot)-\overline{\rho}\|_{L^2}\leq B_1,\ \ \ \ \|\rho(n\tau_1,\cdot)\|_{L^p}\leq D_0+\overline{\rho}.
$$
Then by the similarity with  (\ref{eq:5.51}), for any $n\in\mathbb{Z}^+$, one get
$$
\|\rho(t,\cdot)\|_{L^\infty}\leq C_{L^\infty}\ \ \ \ t\in [0,n\tau_1].
$$
Thus for any $t\geq 0$, we have
$$
\|\rho(t,\cdot)\|_{L^\infty}\leq C_{L^\infty}.
$$
This completes the proof of Proposition \ref{prop:5.3}.
\end{proof}

\vskip .05in

Let us prove the Theorem \ref{thm:1.1} briefly.
\begin{proof}[The proof of Theorem \ref{thm:1.1}]
According to the proof of Proposition \ref{prop:3.1} and Proposition \ref{prop:5.3}, we deduce that for the solution $\rho(t,x)$ of Equation (\ref{eq:1.1}), one has
$$
\|D\rho\|_{L^\infty}< \infty,\ \ \ \|D^2\rho\|_{L^\infty}< \infty,\ \ \ \|D^3\rho\|_{L^\infty}< \infty \quad 0\leq t\leq \infty.
$$
By using standard continuation argument, we have
$$
\rho(t,x)\in C(\mathbb{R}^{+};W^{3,\infty}(\mathbb{T}^d)).
$$
This completes the proof of Theorem \ref{thm:1.1}.
\end{proof}

\begin{remark}\label{rem:18}
In fact, for any $k\geq1$, $\rho_0\in W^{k,\infty}(\mathbb{T}^d)$, we can obtain
$$
\rho(t,x)\in C(\mathbb{R}^{+}; W^{k,\infty}(\mathbb{T}^d)).
$$
\end{remark}

\vskip .2in

\section{Appendix}

\vskip .1in

\subsection{Nonlinear maximum principle}
The nonlinear maximum principle of fractional Laplancian operator has been extensively studied and applied, the details can be referred to \cite{Burczak.2017, Constantin.2012, Rafael.2016, Shi.2019}. In this section, we establish a nonlinear lower bound for the $|D^kf|, k\in \mathbb{Z^+}$ on tours $\mathbb{T}^d$. The main idea of proof is from \cite{Constantin.2012}.

\vskip .1in

\begin{lemma}\label{lem:6.1}
Let $f\in \mathcal{S}(\mathbb{T}^d)$ and denote by $\overline{x}\in \mathbb{T}^d$ the point such that
$$
|Df|^2(\overline{x})=\max_{x\in \mathbb{T}^d}|Df|^2(x).
$$
Then we have the following
\begin{equation}\label{eq:6.1}
\int_{\mathbb{T}^d}\frac{[Df(\overline{x})-Df(y)]^2}{|\overline{x}-y|^{d+\alpha}}dy
\geq c_1\frac{|Df|^{2+\alpha}(\overline{x})}{\|f\|^\alpha_{L^\infty}},
\end{equation}
or
\begin{equation}\label{eq:6.2}
|Df|(\overline{x})\leq c_2\|f\|_{L^\infty},
\end{equation}
where $D$ denotes any partial derivative, and $c_1,c_2$ are positive constant.
\end{lemma}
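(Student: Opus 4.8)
The plan is to prove a nonlinear lower bound of Córdoba--Córdoba / Constantin--Vicol type, adapted to the torus, for the quantity $g := Df$ at its maximum point $\overline{x}$. Set $M := |Df|(\overline{x}) = \|Df\|_{L^\infty}$ and normalize, if convenient, by working directly with $g(\overline{x})$, noting that since $\overline{x}$ maximizes $|Df|^2$ we have $|g(y)| \le M$ for all $y$ and $|g(\overline{x})| = M$. The strategy is to split the singular integral $I := \int_{\mathbb{T}^d} \frac{[Df(\overline{x})-Df(y)]^2}{|\overline{x}-y|^{d+\alpha}}\,dy$ into an inner region $\{|\overline{x}-y| \le r\}$ and an outer region $\{r < |\overline{x}-y|\}$ for a radius $r \le \tfrac14$ to be optimized, and to bound $I$ from below by the outer contribution alone.

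The key steps, in order, are as follows. First I would discard the inner integral, which is nonnegative, so that $I \ge \int_{r < |\overline{x}-y|, \, y \in \mathbb{T}^d} \frac{[g(\overline{x})-g(y)]^2}{|\overline{x}-y|^{d+\alpha}}\,dy$. Second, expand the square: $[g(\overline{x})-g(y)]^2 = g(\overline{x})^2 - 2 g(\overline{x}) g(y) + g(y)^2 \ge g(\overline{x})^2 - 2 g(\overline{x}) g(y)$, and hence $I \ge g(\overline{x})^2 \int_{r < |\overline{x}-y|} \frac{dy}{|\overline{x}-y|^{d+\alpha}} - 2 g(\overline{x}) \int_{r < |\overline{x}-y|} \frac{g(y)}{|\overline{x}-y|^{d+\alpha}}\,dy$. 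The first integral is bounded below by $c\, r^{-\alpha}$ for a dimensional constant $c>0$ (integrating in polar coordinates over the annulus $r < |z| < \tfrac14$, say, or over all of $\mathbb{T}^d \setminus B_r$ using periodicity; one only needs a lower bound so any fixed outer cutoff works). Third, to control the cross term I would write $g = Df$ and integrate by parts in $y$: since $\int_{r<|\overline{x}-y|}\frac{D_y f(y)}{|\overline{x}-y|^{d+\alpha}}\,dy$ equals, up to the boundary term on the sphere $|\overline{x}-y|=r$ and sign, $\int_{r<|\overline{x}-y|} f(y)\, D_y\!\left(\frac{1}{|\overline{x}-y|^{d+\alpha}}\right) dy$, and $|D_y |\overline{x}-y|^{-(d+\alpha)}| \le C |\overline{x}-y|^{-(d+\alpha+1)}$, we get $\big|\int_{r<|\overline{x}-y|}\frac{g(y)}{|\overline{x}-y|^{d+\alpha}}\,dy\big| \le C\|f\|_{L^\infty} r^{-\alpha-1} + C\|f\|_{L^\infty} r^{-\alpha-1}$ from the boundary term (surface area $\sim r^{d-1}$ times $r^{-(d+\alpha)}$) plus the bulk term. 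Thus $I \ge c\, g(\overline{x})^2 r^{-\alpha} - C\, |g(\overline{x})|\, \|f\|_{L^\infty}\, r^{-\alpha-1}$.

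Fourth, optimize: choose $r = \min\{\tfrac14,\ K \|f\|_{L^\infty}/|g(\overline{x})|\}$ for a small absolute constant $K$. If the minimum is the second term, plugging in gives $I \ge c' \frac{|g(\overline{x})|^{2+\alpha}}{\|f\|_{L^\infty}^\alpha}$, which is \eqref{eq:6.1}. If instead $\tfrac14 \le K\|f\|_{L^\infty}/|g(\overline{x})|$, then $|Df|(\overline{x}) = |g(\overline{x})| \le 4K \|f\|_{L^\infty}$, which is \eqref{eq:6.2} with $c_2 = 4K$. The extension from $k=1$ to general $k$ (needed for Corollary~\ref{cor:6.2}) is identical, replacing $f$ by $D^{k-1}f$ throughout, since the integration by parts only uses that $g$ is a first derivative of an $L^\infty$ function and that $|g|$ attains its max at $\overline{x}$.

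The main obstacle I anticipate is the integration-by-parts step on the torus with the truncated singular kernel: one must keep careful track of the boundary term on the sphere $\{|\overline{x}-y|=r\}$ and verify it has the claimed size $\sim \|f\|_{L^\infty} r^{-\alpha-1}$, and one must also make sure the periodic images do not spoil the lower bound on $\int \frac{dy}{|\overline{x}-y|^{d+\alpha}}$ over the outer region — this is handled by restricting the lower bound to the ball $B_{1/4}(\overline{x}) \subset \mathbb{T}^d$ where the kernel coincides with the Euclidean one. Everything else is a routine polar-coordinate computation and the two-case optimization; the only genuinely delicate point is tracking constants so that $c_1, c_2$ come out positive and dimension-dependent only.
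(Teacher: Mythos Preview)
Your proposal is correct and follows essentially the same Constantin--Vicol strategy as the paper: drop the inner region, expand the square, integrate the cross term by parts to trade $Df$ for $\|f\|_{L^\infty}$, and optimize the cutoff radius to obtain the dichotomy. The only cosmetic difference is that the paper uses a smooth cutoff $\chi(\cdot/R)$ (so the extra $R^{-1}$ comes from differentiating $\chi$ and there is no boundary term), whereas you use a sharp cutoff at radius $r$ and pick up the same $r^{-\alpha-1}$ contribution from the sphere boundary term; both routes give the identical lower bound $c\,M^2 r^{-\alpha} - C\,M\|f\|_{L^\infty}\,r^{-\alpha-1}$ and the same optimization.
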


\begin{proof}
We denote
\begin{equation}\label{eq:6.3}
F=\int_{\mathbb{T}^d}\frac{[Df(\overline{x})-Df(y)]^2}{|\overline{x}-y|^{d+\alpha}}dy.
\end{equation}
Define a radially non-decreasing smooth cut-off function, it is as follows
\begin{equation}\label{eq:6.4}
\chi(x)=
\begin{cases}
0\ \ \ \ \ \ & |x|\leq\frac{1}{2},\\
1\ \ \ \ \ \ & |x|\geq 1,
\end{cases}
\end{equation}
and $0\leq\chi\leq1$. Let $R>0$ be a constant, to be chose later. Since for any $y\in \mathbb{T}^d$, one has
\begin{equation}\label{eq:6.5}
[Df(\overline{x})-Df(y)]^2\geq (Df)^2(\overline{x})-2Df(\overline{x})Df(y).
\end{equation}
Then if $R<1$, combining (\ref{eq:6.3}), (\ref{eq:6.4}) and (\ref{eq:6.5}), to obtain
\begin{equation}\label{eq:6.6}
\begin{aligned}
F&\geq \int_{\mathbb{T}^d}\frac{[Df(\overline{x})-Df(y)]^2}{|\overline{x}-y|^{d+\alpha}}\chi(y/R)dy\\
&\geq\int_{\mathbb{T}^d}\frac{(Df)^2(\overline{x})}{|\overline{x}-y|^{d+\alpha}}\chi(y/R)dy
-2\int_{\mathbb{T}^d}\frac{Df(\overline{x})Df(y)}{|\overline{x}-y|^{d+\alpha}}\chi(y/R)dy\\
&\geq |Df|^2(\overline{x})\int_{\mathbb{T}^d}\frac{\chi(y/R)}{|\overline{x}-y|^{d+\alpha}}dy
-|Df|(\overline{x})\left| \int_{\mathbb{T}^d}Df(y)\frac{\chi(y/R)}{|\overline{x}-y|^{d+\alpha}}dy
\right|\\
&\geq|Df|^2(\overline{x})\int_{\mathbb{T}^d \cap B_R^c}\frac{1}{|\overline{x}-y|^{d+\alpha}}dy
-|Df|(\overline{x})\|f\|_{L^\infty}\int_{\mathbb{T}^d}
\left|D\frac{\chi(y/R)}{|\overline{x}-y|^{d+\alpha}}\right|dy\\
&\geq c'|Df|^2(\overline{x})(\frac{1}{R^\alpha}-1)
-c'_2\frac{|Df|(\overline{x})\|f\|_{L^\infty}}{R^{\alpha+1}}\\
&\geq c'_1\frac{|Df|^2(\overline{x})}{R^\alpha}
-c'_2\frac{|Df|(\overline{x})\|f\|_{L^\infty}}{R^{\alpha+1}},
\end{aligned}
\end{equation}
where
$$
c'_1=c_1(d,\alpha,\chi)>0,\ \ \ \ \ \ c'_2=c_2(d,\alpha,\chi)>0.
$$
Define
\begin{equation}\label{eq:6.7}
R=\frac{2c'_2\|f\|_{L^\infty}}{c'_1|Df|(\overline{x})}.
\end{equation}
If $R<1$, we deduce by (\ref{eq:6.6}) and (\ref{eq:6.7}) that
\begin{equation}\label{eq:6.8}
\begin{aligned}
F&\geq c'_1\frac{|Df|^2(\overline{x})}{R^\alpha}
-c'_2\frac{|Df|(\overline{x})\|f\|_{L^\infty}}{R^{\alpha+1}}\\
&=c'_1\left(\frac{c'_1}{2c'_2}\right)^\alpha
\frac{(|Df|)^{2+\alpha}(\overline{x})}{\|f\|^\alpha_{L^\infty}}
-c'_2\left(\frac{c'_1}{2c'_2}\right)^{1+\alpha}
\frac{(|Df|)^{2+\alpha}(\overline{x})}{\|f\|^\alpha_{L^\infty}}\\
&=\left( \frac{1}{2^\alpha}-\frac{1}{2^{1+\alpha}}\right)\frac{(c'_1)^{1+\alpha}}{(c'_2)^{\alpha}}
\frac{(|Df|)^{2+\alpha}(\overline{x})}{\|f\|^\alpha_{L^\infty}}\\
&=c_1\frac{|Df|^{2+\alpha}(\overline{x})}{\|f\|^\alpha_{L^\infty}}.
\end{aligned}
\end{equation}
If $R\geq1$, by the definition of $R$ in (\ref{eq:6.7}), one get
\begin{equation}\label{eq:6.9}
R=\frac{2c'_2\|f\|_{L^\infty}}{c'_1|Df|(\overline{x})}\geq1,
\end{equation}
then through (\ref{eq:6.9}), we obtain
\begin{equation}\label{eq:6.10}
|Df|(\overline{x})\leq\frac{2c'_2}{c'_1}\|f\|_{L^\infty}\leq c_2\|f\|_{L^\infty}.
\end{equation}
Therefore, for any $f\in \mathcal{S}(\mathbb{T}^d)$, at least one of (\ref{eq:6.8}) and (\ref{eq:6.10}) is true. This completes the proof of Lemma \ref{lem:6.1}.
\end{proof}

\vskip .1in

\begin{remark}\label{rem:19}
The case of $\mathbb{R}^d$ has been proved, the details can be referred to \cite{Constantin.2012}.
\end{remark}

\vskip .1in

For the case of $|D^k f|,k\in \mathbb{Z}^+$, we also has the similar result with Lemma \ref{lem:6.1}, it is as follows

\vskip .1in

\begin{corollary}\label{cor:6.2}
Let $f\in \mathcal{S}(\mathbb{T}^d)$ and denote  by $\overline{x}\in \mathbb{T}^d$ the point such that
$$
|D^kf|^2(\overline{x})=\max_{x\in \mathbb{T}^d}|Df|^2(x).
$$
Then we have the following
\begin{equation}\label{eq:6.11}
\int_{\mathbb{T}^d}\frac{[D^kf(\overline{x})-D^kf(y)]^2}{|\overline{x}-y|^{d+\alpha}}dy
\geq c_1\frac{|D^kf|^{2+\alpha}(\overline{x})}{\|D^{k-1}f\|^\alpha_{L^\infty}},
\end{equation}
or
\begin{equation}\label{eq:6.12}
|D^kf|(\overline{x})\leq c_2\|D^{k-1}f\|_{L^\infty},
\end{equation}
where $k\in \mathbb{Z}^{+}$, $D$ denotes any partial derivative, and $c_1,c_2$ are positive constant.
\end{corollary}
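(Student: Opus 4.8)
The plan is to deduce Corollary \ref{cor:6.2} directly from Lemma \ref{lem:6.1} by a change of function. First I would set $g=D^{k-1}f$, where $D^{k-1}$ denotes the composition of the first $k-1$ of the partial derivatives making up $D^k$; since $f\in\mathcal{S}(\mathbb{T}^d)$, every partial derivative of $f$ again lies in $\mathcal{S}(\mathbb{T}^d)$, so $g\in\mathcal{S}(\mathbb{T}^d)$ and $\|g\|_{L^\infty}=\|D^{k-1}f\|_{L^\infty}<\infty$. Writing $D$ for the remaining (outermost) partial derivative, we have $Dg=D^kf$, and the point $\overline{x}$ at which $|D^kf|^2$ attains its maximum on $\mathbb{T}^d$ is precisely a point at which $|Dg|^2$ attains its maximum.

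Next I would apply Lemma \ref{lem:6.1} with $g$ in place of $f$. The lemma then supplies the dichotomy: either
\[
\int_{\mathbb{T}^d}\frac{[Dg(\overline{x})-Dg(y)]^2}{|\overline{x}-y|^{d+\alpha}}dy\geq c_1\frac{|Dg|^{2+\alpha}(\overline{x})}{\|g\|^\alpha_{L^\infty}},
\]
or $|Dg|(\overline{x})\leq c_2\|g\|_{L^\infty}$, with the same constants $c_1,c_2$ furnished by Lemma \ref{lem:6.1}. Substituting $Dg=D^kf$ and $\|g\|_{L^\infty}=\|D^{k-1}f\|_{L^\infty}$ turns the first alternative into (\ref{eq:6.11}) and the second into (\ref{eq:6.12}), which is exactly the claimed conclusion.

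There is essentially no analytic obstacle here; the only points requiring a word of care are the stability of the Schwartz class under differentiation on the torus (immediate) and the observation that Lemma \ref{lem:6.1} is stated for an arbitrary partial derivative, so it applies regardless of the coordinate direction of the outermost derivative in $D^k$. Should one prefer a self-contained argument, the proof of Lemma \ref{lem:6.1} can be repeated verbatim with $f$ replaced by $D^{k-1}f$ throughout, cutting off the singular kernel at radius $R=2c_2'\|D^{k-1}f\|_{L^\infty}/(c_1'|D^kf|(\overline{x}))$ and splitting into the cases $R<1$ and $R\geq1$ exactly as before; this reproduces (\ref{eq:6.11})--(\ref{eq:6.12}) with the same constants.
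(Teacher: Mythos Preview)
Your proposal is correct and matches the paper's approach: the paper's proof of Corollary \ref{cor:6.2} consists of a single line stating that the result follows by the same argument as Lemma \ref{lem:6.1}, which is exactly your reduction via $g=D^{k-1}f$ (or, equivalently, rerunning the cut-off argument with $f$ replaced by $D^{k-1}f$). You have simply made explicit what the paper leaves implicit.
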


\vskip .1in

\begin{proof}
By the similar with the proof of Lemma \ref{lem:6.1}, we can get the Corollary \ref{cor:6.2}.
\end{proof}

\vskip .2in

\subsection{The estimate of operator}

We give the proofs of Lemma \ref{lem:2.8} and Lemma \ref{lem:2.9}.

\vskip .05in

\begin{proof}[The proof of Lemma \ref{lem:2.8}]
We only claim that $\liminf\limits_{A\rightarrow +\infty}\Psi(H^\alpha_A)<+\infty$ implies that $U^t$ has a nonzero eigenfunction in $\dot{H}^{\frac{\alpha}{2}}(\mathbb{T}^d)\cap X$, where $U^t$ is defined in (\ref{eq:2.10}). In this case, there exists a sequence $\{A_m\}_{m=1}^{\infty}$ and constant $\delta_0\in \mathbb{R}^+$, such that  $\lim\limits_{m\rightarrow +\infty}A_m=+\infty$ and
\begin{equation}\label{eq:6.13}
\Psi(H^{\alpha}_{A_m})<\delta_0, \ \ \ m=1,2,\cdots.
\end{equation}
Combining (\ref{eq:2.11}), (\ref{eq:2.13}) and (\ref{eq:6.13}), we imply that for any fixed $m$, there exists $\lambda_m\in \mathbb{R}, f_m\in D(H)$, such that $\|f_m\|_{L^2}=1$ and
\begin{equation}\label{eq:6.14}
\|(H^\alpha_{A_m}-i\lambda_m)f_m\|_{L^2}< \delta_0.
\end{equation}
According to Cauchy-Schwartz inequality and (\ref{eq:6.14}), one has
\begin{equation}\label{eq:6.15}
\|\Lambda^{\frac{\alpha}{2}}f_m\|^2_{L^2}=Re \langle f_m,g_m\rangle\leq \|f_m\|_{L^2}\|g_m\|_{L^2}<\delta_0,
\end{equation}
where
\begin{equation}\label{eq:6.16}
g_m=(H^\alpha_{A_m}-i\lambda_m)f_m.
\end{equation}
Thus there exists $f_0\in \dot{H}^{\frac{\alpha}{2}}(\mathbb{T}^d)$ and a subsequence of $\{f_m\}_{m=1}^{\infty}$, still denote by $\{f_m\}_{m=1}^{\infty}$, such that
\begin{equation}\label{eq:6.17}
f_m\rightarrow f_0\ \ \ \  in\ \  L^2(\mathbb{T}^d)
\end{equation}
is strongly convergence and
$$
\|f_0\|_{L^2}=1, \ \ \ f_0\in X.
$$
Combining the definition of (\ref{eq:6.16}) and the property of fractional Laplacian, we know that for any $f\in H^{1}(\mathbb{T}^d)$, one has
\begin{equation}\label{eq:6.18}
\langle g_m,f\rangle=\langle \Lambda^{\frac{\alpha}{2}} f_m, \Lambda^{\frac{\alpha}{2}} f\rangle
+A_m\langle u\cdot\nabla f_m,f\rangle-i\lambda_m\langle f_m,f\rangle.
\end{equation}
As $f\in H^{\frac{\alpha}{2}}(\mathbb{T}^d)$ and $\lim\limits_{m\rightarrow \infty}A_m=\infty$, we deduce by (\ref{eq:6.14}), (\ref{eq:6.17}) and (\ref{eq:6.18}) that
\begin{equation}\label{eq:6.19}
\langle u\cdot\nabla f_m,f\rangle-i\frac{\lambda_m}{A_m}\langle f_m,f\rangle=\frac{\langle g_m,f\rangle-\langle \Lambda^{\frac{\alpha}{2}} f_m, \Lambda^{\frac{\alpha}{2}} f\rangle}{A_m}\rightarrow 0,\ \ \ m\rightarrow \infty,
\end{equation}
where we use the
$$
\begin{aligned}
|\langle g_m,f\rangle-\langle \Lambda^{\frac{\alpha}{2}} f_m, \Lambda^{\frac{\alpha}{2}} f\rangle|&
\leq \|g_m\|_{L^2}\|f\|_{L^2}+\|\Lambda^{\frac{\alpha}{2}} f_m\|_{L^2}\|\Lambda^{\frac{\alpha}{2}} f\|_{L^2}\\
&\leq \delta_0\|f\|_{L^2}+\delta_0^{\frac{1}{2}}\|\Lambda^{\frac{\alpha}{2}} f\|_{L^2}.
\end{aligned}
$$
As $u$ is divergence-free and (\ref{eq:6.17}), one has
\begin{equation}\label{eq:6.20}
\langle u\cdot \nabla f_m,f\rangle=-\langle f_m, u\cdot \nabla f\rangle\rightarrow -\langle f_0, u\cdot \nabla f\rangle
=\langle u\cdot \nabla f_0,  f\rangle,\ \ \ m\rightarrow \infty,
\end{equation}
and
$$
\langle f_m,f\rangle\rightarrow\langle f_0,f\rangle,\ \ \ m\rightarrow \infty.
$$
Notices that the (\ref{eq:6.20}) indicates that $u\cdot \nabla f_0$ is well define in the sense of inner product. Combining (\ref{eq:6.19}) and (\ref{eq:6.20}), to obtain
\begin{equation}\label{eq:6.21}
\lim_{m\rightarrow +\infty}i\frac{\lambda_m}{A_m}\langle f_m,f\rangle=\langle  u\cdot \nabla f_0,f\rangle.
\end{equation}
Since $\dot{H}^{\frac{\alpha}{2}}(\mathbb{T}^d)$ is dense in $\dot{H}^{1}(\mathbb{T}^d)$, then there exist a $f_1\in \dot{H}^{1}(\mathbb{T}^d)$, such that
$$
|\langle f_0-f_1,f_0\rangle|=|\langle f_0,f_0\rangle-\langle f_0,f_1\rangle|\leq\frac{1}{2},
$$
without loss of generality, we assume that $\langle f_0,f_1\rangle=\frac{1}{2}$. If we take $f=f_1$,  then one has
$$
\langle f_m,f_1\rangle \rightarrow \langle f_0, f_1\rangle=\langle f_0,f_1\rangle=\frac{1}{2}\neq 0,\ \ \ m\rightarrow \infty.
$$
Therefore, we deduce by (\ref{eq:6.21}) that
$$
\frac{1}{2}\times i\frac{\lambda_m}{A_m}\rightarrow \langle u\cdot \nabla f_0,f_1\rangle\doteq i\lambda.
$$
Then for every $f\in H^{1}(\mathbb{T}^d)$, we get
$$
i2\lambda\langle f_0,f\rangle=\lim_{m\rightarrow +\infty}i\frac{\lambda_m}{A_m}\langle f_m,f\rangle=\langle  u\cdot \nabla f_0,f\rangle.
$$
If $\lambda=0$, one can know that $f_0$ is a constant, this is contradictory to $f_0\in X$. Then
$$
 u\cdot \nabla f_0=i2\lambda f_0,\ \ \ f_0\neq0.
$$
Thus $f_0$ is a nonzero eigenfunction of operator $U^t$ in $\dot{H}^{\frac{\alpha}{2}}(\mathbb{T}^d)\cap X$,  this contradicts the definition that $u$ is weakly mixing. So we have
$$
\lim_{A\rightarrow +\infty}\Psi(H^\alpha_A)=+\infty.
$$
This completes the proof of Lemma \ref{lem:2.8}.
\end{proof}

\vskip .1in

Next, we give the proof of Lemma \ref{lem:2.9} by Lemma \ref{lem:2.8}.

\vskip .1in

\begin{proof}[The proof of Lemma \ref{lem:2.9}]
According to  the definition of $U^t$ in (\ref{eq:2.10}) and $H^\alpha_A$ in (\ref{eq:2.13}),  the semigroup $e^{-tH^\alpha_A}$ can written as
\begin{equation}\label{eq:6.22}
e^{-tH^\alpha_A}=e^{-(-\Delta)^{\frac{\alpha}{2}}t}U^{At}.
\end{equation}
Combining the Fourier series and (\ref{eq:6.22}), for $f\in D(H^\alpha_A)$, one has
\begin{equation}\label{eq:6.23}
e^{-tH^\alpha_A}f(x)=\sum_{k\in \mathbb{Z}^d}e^{-|k|^\alpha t}\widehat{U^{At}f}(k)e^{ikx}, \ \ \ x\in \mathbb{T}^d.
\end{equation}
The operator $P_N$ is applied to (\ref{eq:6.23}), then one get
\begin{equation}\label{eq:6.24}
P_Ne^{-tH^\alpha_A}f(x)=\sum_{|k|\leq N}e^{-|k|^\alpha t}\widehat{U^{At}f}(k)e^{ikx}.
\end{equation}
We deduce by Parseval's identity that
\begin{equation}\label{eq:6.25}
\|P_Ne^{-tH^\alpha_A}f\|^2_{L^2}=\sum_{|k|\leq N}\left|e^{-|k|^\alpha t}\widehat{U^{At}f}(k)\right|^2.
\end{equation}
We claim that for any $t\geq 0$ and $A$ is large enough, one has
\begin{equation}\label{eq:6.26}
\|P_Ne^{-tH^\alpha_A}f\|^2_{L^2}\leq C\|P_Nf\|^2_{L^2},
\end{equation}
where $C>1$ is a fixed constant. If $t=0$, we can easily get
\begin{equation}\label{eq:6.27}
\|P_Ne^{-tH^\alpha_A}f\|^2_{L^2}=\|P_Nf\|^2_{L^2}.
\end{equation}
For $t>0$ and $A$ is large enough, if
\begin{equation}\label{eq:6.28}
\|P_Ne^{-tH^\alpha_A}f\|^2_{L^2}>C\|P_Nf\|^2_{L^2},
\end{equation}
then the $e^{-tH^\alpha_A}f(x)$  has the process of energy moving to low frequency. In fact, according to the definition $\Psi(H^\alpha_A)$, (\ref{eq:2.15}) and the Lemma \ref{lem:2.8}, we deduce that the $e^{-tH^\alpha_A}$  has only the process of energy moving to high frequency if $A$ is large enough.
Thus the (\ref{eq:6.28}) is not true. Then for $t>0$ and $A$ is large enough, one has
\begin{equation}\label{eq:6.29}
\|P_Ne^{-tH^\alpha_A}f\|_{L^2}\leq C\|P_Nf\|_{L^2}.
\end{equation}
Combining (\ref{eq:6.27}) and (\ref{eq:6.29}), we show that for any $t\geq 0$, if $A$ is large enough,  we have
\begin{equation}\label{eq:6.30}
\|P_Ne^{-tH^\alpha_A}f\|_{L^2}\leq C\|P_Nf\|_{L^2}.
\end{equation}
This completes the proof of Lemma \ref{lem:2.9}.
\end{proof}

\vskip .1in

\noindent \textbf{Acknowledgement}

\noindent The author would like to thank  Professor Weike Wang for helpful discussion. The research are supported by the National Natural Science Foundation of China 11771284.

\vskip .3in


\end{document}